\documentclass[11pt]{amsart}

\usepackage{geometry}
\usepackage{amsmath,amssymb}
\usepackage{enumerate}
\usepackage[T1]{fontenc}
\usepackage{lmodern}
\usepackage[utf8]{inputenc}
\usepackage{afterpage}
\usepackage{float}

\newcommand{\needspace}[1]{\par\penalty-100\begingroup\dimen0=\pagegoal\advance\dimen0 by -\pagetotal
  \ifdim\dimen0<#1 \newpage\fi\endgroup}
\usepackage{xcolor}
\usepackage{tikz}
\usepackage{booktabs,longtable}
\usetikzlibrary{decorations.pathreplacing,patterns,shapes.geometric}
\definecolor{inertC}{RGB}{68,119,170}
\definecolor{caC}{RGB}{238,153,68}
\definecolor{cbC}{RGB}{34,136,51}
\definecolor{preC}{RGB}{195,195,195}
\definecolor{cutC}{RGB}{204,51,17}

\usepackage[colorlinks,linkcolor=blue!45!black,citecolor=blue!45!black,urlcolor=blue!45!black]{hyperref}
\hypersetup{pdftitle={Purely Periodic Three-move Subtraction Games},pdfauthor={Hikaru Manabe},
  pdfsubject={Combinatorial game theory},
  pdfkeywords={subtraction game, Sprague-Grundy value, periodicity, P-position, bracket sequence}}

\theoremstyle{plain}
\newtheorem{thm}{Theorem}
\newtheorem{corollary}[thm]{Corollary}
\newtheorem{prop}[thm]{Proposition}
\newtheorem{lemma}[thm]{Lemma}
\newtheorem{conj}[thm]{Conjecture}

\theoremstyle{definition}
\newtheorem{defi}[thm]{Definition}
\newtheorem{remark}[thm]{Remark}
\newtheorem{example}[thm]{Example}
\usepackage{etoolbox}
\newcommand{\eodsym}{\ensuremath{\diamond}}
\newcommand{\eodmark}{\leavevmode\unskip\nobreak\hfil\penalty50\hskip1em\hbox{}%
  \nobreak\hfil\eodsym\parfillskip=0pt\finalhyphendemerits=0\par}
\AtEndEnvironment{defi}{\eodmark}
\AtEndEnvironment{remark}{\eodmark}
\AtEndEnvironment{example}{\eodmark}

\newcommand{\Z}{\mathbb{Z}}
\newcommand{\N}{\ensuremath{\mathbb{N}}}      
\renewcommand{\P}{\ensuremath{\mathrm{P}}}      
\newcommand{\W}{\mathcal{W}}                   
\newcommand{\G}{\mathcal{G}}                   
\newcommand{\Sbase}{S\setminus\{c\}}           
\newcommand{\Th}{\Theta}                       

\renewcommand{\ge}{\geqslant}
\renewcommand{\le}{\leqslant}

\title{Purely Periodic Three-move Subtraction Games}
\author{Hikaru Manabe}
\address{College of Information Science, School of Informatics, University of Tsukuba, Japan}
\email{urakihebanam@gmail.com}
\subjclass[2020]{91A46; 11B50, 68R15}
\keywords{Subtraction game, Sprague--Grundy value, pure periodicity, P-position,
bracket sequence, rotation}

\begin{document}
\begin{abstract}
We determine the full Sprague--Grundy sequence and its least period for a
class of three-move subtraction games whose sequences are periodic from
the start. Write the move set as $S=\{a,b,c\}$, with
$0<a<b<c$ and $\gcd(a,b,c)=1$.
The case $a=1$ is known and is summarized separately.
For $a\ge2$ and $c\ne a+b$, we give three explicit sufficient tests for pure
periodicity, associated with the candidate periods $a+b$, $c+a$, and $c+b$.
For fixed $a,b$, the tests depend only on $c\bmod(a+b)$ and are computed
from the losing positions of the two-move game $\{a,b\}$.
Whenever a test succeeds, we give the losing positions in closed form,
reconstruct the remaining values, and prove that the least period is
the smallest candidate whose test succeeds.
The construction shows how the added move $c$ modifies the two-move
pattern to form a repeating block.
We also give a uniform formulation of the known additive case $c=a+b$.
For $a\ge2$, we conjecture that the tests cover every purely periodic
non-additive game. In this range, when $c\ge2(a+b)$, we prove necessity
for purely periodic games with least period at most $a+b$.
\end{abstract}
\date{September 16, 2026}
\maketitle
\enlargethispage{4pt}

\section{Introduction}\label{sec:intro}

A \emph{subtraction game} \cite{berlekamp2004winning} has positions
$x\in\N$ and legal moves $x\mapsto x-s$ for $s\in S$, $s\le x$; the
player unable to move loses. We study $S=\{a,b,c\}$ with $0<a<b<c$.
Its \emph{nim-value} $\G_S(x)$ \cite{Sprague,Grundy} is the least
non-negative integer absent from the values of its options, so
$\G_S(x)\in\{0,1,2,3\}$. The zero set $\W_0(S)$ consists of the losing,
or \emph{$\P$-positions}. The nim-sequence is eventually periodic
\cite{golomb1966mathematical}: $\G_S(x+p)=\G_S(x)$ for all $x\ge q$.
Taking $p\ge1$ minimal and then $q\ge0$ minimal defines the
\emph{period} $p(S)$ and \emph{pre-period} $q(S)$. Pure periodicity
means $q=0$; we call the case $q>0$ pre-periodic.

Our aim is to determine purely periodic nim-sequences directly from
the three moves. The case $a=1$ is already determined in
\cite[Example~2.4 and Proposition~2.5]{Zhang}; its purely periodic
formulas are recalled in Table~\ref{tab:a1-known}.
We therefore focus on primitive rulesets ($\gcd(a,b,c)=1$) with
$2\le a<b<c$ and $c\ne a+b$. For these rulesets, we give an explicit sufficient test
for each of the candidate periods $a+b$, $c+a$, and $c+b$.
We call a candidate admissible when its test succeeds.
If at least one test succeeds, we determine
the entire nim-sequence and prove that its least period is the
smallest candidate whose test succeeds
(Theorems~\ref{thm:main} and~\ref{thm:lfunction}).

The construction starts with the two-move game $B=\{a,b\}$, which
agrees with $S$ at every position below $c$. Its $\P$-positions repeat
with period $a+b$ and residue set $\mathcal Z$
(Proposition~\ref{prop:base}). When the move $c$ becomes available,
it can connect two positions in this pattern. For fixed $a,b$, the
residues it connects depend only on $\rho=c\bmod(a+b)$.
We represent $\mathcal Z$ on the circle $\Z/(a+b)\Z$ and call $\rho$
the angle of the added move. Writing $c=K(a+b)+\rho$, with
$0\le\rho<a+b$, records the number $K$ of complete base blocks
below $c$ separately from this residue information.

The three tests express conditions on this angle as avoidance of
explicit unions of arcs. When the test for $P=c+a$ or $c+b$ succeeds,
we retain the base $\P$-positions below $c$, determine the
$\P$-positions in $[c,P)$, and prove that this initial block repeats
with period $P$ (Theorems~\ref{thm:verify} and~\ref{thm:cut}).
When the test for $P=a+b$ succeeds, the base pattern is unchanged.
Ferguson's pairing gives the positions of value $1$; the $L$-function
of Theorem~\ref{thm:lfunction} distinguishes values $2$ and $3$ on
the remaining positions. Table~\ref{tab:closedforms} collects the
$\P$-position formulas and least periods.

For the \emph{additive} case $c=a+b$, with $a\ge2$ and $\gcd(a,b)=1$,
we give a uniform formulation of the known results. The $n$-th
$\P$-position, indexed from $0$, is
\[
  w_3(n)=n+a\lfloor n/a\rfloor+b\lfloor n/N\rfloor,
  \qquad N=\#\bigl(\W_0(B)\cap[0,b)\bigr).
\]
Shifting the index by $\operatorname{lcm}(a,N)$ increases both floor
functions by fixed integers, so it shifts every $\P$-position by
the same amount. Theorem~\ref{thm:additive} proves that this amount
is the least period of both the $\P$-set and the nim-sequence.
The history of these known additive results and their relation to
\cite{Larsson,MoriwakiThesis} are recorded in Remark~\ref{rem:pq}.
Only $(K,\rho)=(1,0)$ is additive: larger multiples of $a+b$ remain
subject to the angle tests.

We conjecture that the non-additive criterion is also necessary
under the stated hypotheses (Conjecture~\ref{conj:necessity}).
For $c\ge2(a+b)$, Proposition~\ref{prop:necessity-inert} proves
necessity for purely periodic games whose least period is at most
$a+b$. Other proved cases and the remaining questions are discussed
in \S\ref{sec:necessity}.

\begin{example}\label{ex:38}
For $(a,b)=(3,8)$, the base pattern on $\Z/11\Z$ is
$\mathcal Z=\{0,1,2\}\cup\{6,7\}$, and the admissible sets are
\[
  \Th_{a+b}=\{3,8\},\qquad \Th_{c+a}=\{2,8\},\qquad
  \Th_{c+b}=\{3,4,5,9,10\}.
\]
Figure~\ref{fig:circle} selects the least admissible period at each
angle. At $\rho\in\{0,1,6,7\}$ none is admissible, and the conjecture
predicts positive pre-period for non-additive $c$. The exception
$c=11$ is covered by Theorem~\ref{thm:additive}.
Figure~\ref{fig:mechanism} shows how the base pattern is cut and
completed to produce these periods.
\end{example}

\newpage
\begin{figure}[H]
\centering
\begin{tikzpicture}[scale=0.97]
  \def\R{2.5}
  \draw[gray!45] (0,0) circle (\R);
  \foreach \k in {0,1,2,6,7}{
    \draw[line width=5pt,black!14]
      ({90-\k*32.7273+16.3636}:\R) arc ({90-\k*32.7273+16.3636}:{90-\k*32.7273-16.3636}:\R);
  }
  \tikzset{mInert/.style={circle,draw=black,fill=inertC,inner sep=0pt,minimum size=8pt},
           mCa/.style={rectangle,draw=black,fill=caC,inner sep=0pt,minimum size=7.5pt},
           mCb/.style={regular polygon,regular polygon sides=3,draw=black,fill=cbC,inner sep=0pt,minimum size=9.5pt},
           mPre/.style={circle,draw=black,fill=white,inner sep=0pt,minimum size=8pt}}
  \foreach \k/\m in {0/mPre,1/mPre,2/mCa,3/mInert,4/mCb,
                     5/mCb,6/mPre,7/mPre,8/mInert,
                     9/mCb,10/mCb}{
    \node[\m] at ({90-\k*32.7273}:\R) {};
    \node at ({90-\k*32.7273}:{\R+0.45}) {\small$\k$};
  }
  \node[align=left,anchor=west] at (3.6,1.2)
    {\footnotesize
     \tikz\node[mInert,minimum size=6pt] {};\ $p=a+b$\\[2pt]
     \tikz\node[mCa,minimum size=6pt] {};\ $p=c+a$\\[2pt]
     \tikz\node[mCb,minimum size=7.5pt] {};\ $p=c+b$\\[2pt]
     \tikz\node[mPre,minimum size=6pt] {};\ $q>0$ \textup{(}conj.\textup{)}\\[6pt]
     shaded arcs: $\mathcal Z$};
\end{tikzpicture}
\caption{The base arcs and proved least periods for $(a,b)=(3,8)$.
Open dots denote the conjectured pre-periodic angles for non-additive
$c$; the additive ruleset $c=11$ at angle $0$ is purely periodic.}
\label{fig:circle}
\end{figure}

\begin{remark}
Flammenkamp \cite{Flammenkamp}, following Alth\"ofer and B\"ultermann
\cite{AB95}, found the pair-sum pattern computationally at the
outcome level. His divisibility and gcd conjecture
\cite[Vermutung~6]{Flammenkamp} predates Ward's corresponding
nim-value conjecture \cite{Ward}; outcome and nim-value periods must
be distinguished. Ho \cite{Ho} proves period and pre-period formulas
for individual three-move families, some lying in our covered regions;
Zhang \cite[Remark~4]{Zhang} corrects some of the small-$c$ data.

Zhang \cite{Zhang} already studies a fixed base with a varying extra
move and conjectures piecewise-linear periods and pre-periods
\cite[Conjecture~1.3]{Zhang}. His Propositions~3.1 and~4.1 completely
determine the slices $b=2a$ and $b=a+1$. Our proofs give an alternative derivation of his purely periodic cases.
For the families with positive pre-period discussed in \S\ref{sec:necessity},
we use Zhang's results, including \cite[Theorem~6.3]{Zhang} on ultimately
bipartite games.

Moriwaki \cite[Theorem~3]{Moriwaki} treats positive real parameters.
After normalizing $(a,b,c)$ to $(a/c,b/c,1)$, his periodic regions
contain the non-additive integral cases with $b\le2a$ covered here. On these
regions, our formulas recover his periodicity and nim-value results
through the construction of the $\P$-positions. His fourth region excludes the three candidate
periods; it does not assert the absence of every possible pure
period. Our contribution relative to these results is the rotation
test for arbitrary integral base pairs, including $b>2a$, the explicit block construction, uniform
nim-value reconstruction, and the partial necessity result above.

Mikl\'os and Post's translating-zeros criterion
\cite[Lemmas~2.12 and~2.13]{MiklosPost} is a finite test for a prescribed
outcome period. Our criterion evaluates such a test arithmetically
from $(a,b,c)$ and then reconstructs the nim-values. We use their
two-move description in Proposition~\ref{prop:base}; their $a=1$ and
additive results are discussed in Remarks~\ref{rem:a1} and~\ref{rem:pq}.
The proposed relation to their Conjecture~3 is not proved
(Remark~\ref{rem:MP-conj3}). For a survey, see
Larsson and Saha \cite{LarssonSaha}.
\end{remark}

Section~\ref{sec:prelim} gives the base pattern and difference tools.
Section~\ref{sec:rotation} constructs the non-additive $\P$-sets and
evaluates the angle tests; \S\ref{sec:additive} proves the uniform
additive formula. Section~\ref{sec:lfunction} reconstructs the
nim-values and proves minimality, and \S\ref{sec:necessity} discusses
necessity. Appendices~\ref{app:intervals} and~\ref{app:additive} contain
the interval calculations and the additive value proof.

\section{Preliminaries}\label{sec:prelim}

We use $\N=\{0,1,2,\ldots\}$ and integer intervals
$[u,v]=\{u,\ldots,v\}$, $[u,v)=\{u,\ldots,v-1\}$.
Let $B=\Sbase=\{a,b\}$ and $\rho=c\bmod(a+b)$.
For $X\subseteq\N$ and $y\in\N$, scalar translates are
$X+y=y+X=\{x+y:x\in X\}$ and
$X-y=\{x-y:x\in X,\ x\ge y\}$; subtraction thus retains only
non-negative positions.

\subsection*{Recurrence and dilation}

\begin{defi}\label{def:recurrence}
A set $T\subseteq\N$ \emph{satisfies the $S$-recurrence}, for finite
$S\subseteq\N\setminus\{0\}$, if
\begin{equation}\label{eq:precur}
  x\in T\iff\bigl(x-s\notin T\text{ for every }s\in S
  \text{ with }s\le x\bigr)
  \qquad(x\in\N).
\end{equation}
With the convention $y\notin T$ for $y<0$, this is equivalently
$x\in T\iff\bigwedge_{s\in S}(x-s\notin T)$.
\end{defi}

\begin{lemma}\label{lem:unique}
For every finite $S\subseteq\N\setminus\{0\}$ the unique set satisfying
the $S$-recurrence is $\W_0(S)=\{x:\G_S(x)=0\}$.
\end{lemma}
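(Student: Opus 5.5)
We prove two things: that $\W_0(S)$ satisfies the $S$-recurrence (existence), and that any two sets satisfying it coincide (uniqueness). Both follow from the fact that the condition \eqref{eq:precur} at $x$ refers only to membership of strictly smaller positions $x-s$, since every $s\in S$ is positive. So we argue by strong induction on $x$.

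\emph{Existence.} By definition, $\G_S(x)$ is the mex of $\{\G_S(x-s): s\in S,\ s\le x\}$. The mex of a finite set of non-negative integers is $0$ exactly when $0$ is not in the set. Hence $\G_S(x)=0$ if and only if $\G_S(x-s)\ne0$ for every $s\in S$ with $s\le x$. In other words, $x\in\W_0(S)$ if and only if $x-s\notin\W_0(S)$ for all such $s$. This is \eqref{eq:precur} for $T=\W_0(S)$, and it covers the case where no move is legal: there the set of option values is empty, its mex is $0$, and the condition holds vacuously.

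\emph{Uniqueness.} Let $T$ and $T'$ both satisfy the $S$-recurrence. We show by strong induction on $x$ that $x\in T\iff x\in T'$. Suppose this holds for all $y<x$. Since $S\subseteq\N\setminus\{0\}$, every $s\in S$ with $s\le x$ gives $x-s<x$, so $x-s\in T\iff x-s\in T'$. The right-hand side of \eqref{eq:precur} is therefore the same statement for $T$ and for $T'$. Consequently $x\in T\iff x\in T'$. The base case $x=0$ is contained in this step, because no $s$ is legal there and both sets contain $0$.

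Combining the two parts, $\W_0(S)$ is a set satisfying the recurrence, and it is the only one.

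There is no real obstacle. The only points needing care are that $0\notin S$, which is what makes the induction well founded, and the reading of the recurrence for $x<\min S$, where the conjunction is empty and hence true.
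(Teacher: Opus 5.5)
Your proof is correct and is the same argument as the paper's. The paper also gets existence directly from the mex rule. For uniqueness it argues that two solutions cannot first differ at any $x$, since every option $x-s$ is strictly smaller; this is your strong induction phrased as a minimal counterexample.
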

\begin{proof}
The mex rule gives \eqref{eq:precur} for $\W_0(S)$. Two solutions
cannot first differ at $x$, since all options are smaller than $x$
and therefore give the same right-hand side there.
\end{proof}

We call \eqref{eq:precur} the two-move or three-move recurrence when
$S=\{a,b\}$ or $\{a,b,c\}$, respectively.

\begin{lemma}\label{lem:gcd}
For $S'=\{a',b',c'\}$ and an integer $g\ge1$, the dilated ruleset $gS'$ satisfies
\[
  \G_{gS'}(x)=\G_{S'}(\lfloor x/g\rfloor),\qquad
  q(gS')=gq(S'),\qquad p(gS')=gp(S'),
\]
and $\W_0(gS')=\{gm+r:m\in\W_0(S'),\ 0\le r<g\}$.
Hence division by $\gcd(a,b,c)$ reduces pure periodicity to primitive
rulesets.
\end{lemma}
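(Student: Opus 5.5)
The plan is to prove the formula $\G_{gS'}(x)=\G_{S'}(\lfloor x/g\rfloor)$ by strong induction on $x$, and then read off the zero set, the period and the pre-period from it.

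\emph{Induction.} Write $x=gm+r$ with $0\le r<g$. The options of $x$ in $gS'$ are $x-gs=g(m-s)+r$ for $s\in S'$ with $gs\le x$. The condition $gs\le gm+r$ is equivalent to $s\le m$, because $r<g$ and $s$ is an integer. Each such option has $\lfloor (x-gs)/g\rfloor=m-s$. By the induction hypothesis its value is therefore $\G_{S'}(m-s)$. So the option values of $x$ in $gS'$ are exactly the option values of $m$ in $S'$. Taking the mex gives $\G_{gS'}(x)=\G_{S'}(m)$. The description of $\W_0(gS')$ follows at once.

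\emph{Period.} Let $\gamma=\G_{S'}$ and $\Gamma(x)=\gamma(\lfloor x/g\rfloor)$. If $\gamma$ has period $p$ from $q$, then $\Gamma(x+gp)=\gamma(\lfloor x/g\rfloor+p)$. This equals $\Gamma(x)$ whenever $x\ge gq$. Hence $p(gS')\le gp(S')$ and $q(gS')\le gq(S')$.

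For the reverse period bound, suppose $\Gamma(x+P)=\Gamma(x)$ for all $x\ge Q$. Taking $x=gm$ and $x=gm+1$ shows that $\Gamma$ at $gm+P$ and at $gm+P+1$ equals $\gamma(m)$. Because $\Gamma$ is constant on each block $[gk,gk+g)$, it suffices to show that $g\mid P$. Then $P=gp'$ and $\gamma(m+p')=\gamma(m)$ for large $m$, which gives $p(gS')\ge gp(S')$. The divisibility $g\mid P$ is the main obstacle. If $g\nmid P$, write $P=gk+t$ with $0<t<g$. Comparing $x=gm+(g-t)$ with $x=gm+(g-t)-1$, the shifted positions land in the blocks $m+k+1$ and $m+k$, while both unshifted positions lie in block $m$. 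This gives $\gamma(m+k+1)=\gamma(m+k)$ for all large $m$. So $\gamma$ is eventually constant. But that is impossible: a constant value $v$ at $m$ would have to be absent from the option $m-a'$, which carries the same value $v$. The case $g=1$ is trivial, and the argument needs $g\ge2$ so that both choices of $x$ are available.

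\emph{Pre-period.} With $P=gp$ fixed, periodicity from $Q$ means $\gamma(\lfloor x/g\rfloor+p)=\gamma(\lfloor x/g\rfloor)$ for all $x\ge Q$. For each $m$, this condition for the position $m$ is required exactly when some $x\in[gm,gm+g)$ is at least $Q$, that is, when $gm+g-1\ge Q$. So the least valid $Q$ is $gq(S')$, using the minimality of $q(S')$ at the index $q(S')-1$. If $q(S')=0$, then $Q=0$.

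The final sentence of the lemma follows by taking $g=\gcd(a,b,c)$.
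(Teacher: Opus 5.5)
Your proof is correct and follows the paper's argument. Your explicit induction is the paper's remark that each residue class modulo $g$ is a copy of $S'$. Showing that $g\nmid P$ would force $\G_{S'}$ to be eventually constant is the contrapositive of the paper's step that the late value changes of $\G_{gS'}$ all lie at multiples of $g$, so any eventual period must carry one to another. Your pre-period count is the paper's failure at $x=gq(S')-1$. The interim inequality $q(gS')\le gq(S')$, stated before the least period is identified, is not needed, because your final paragraph computes the pre-period directly for the period $gp(S')$.
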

\begin{proof}
Each residue class modulo $g$ is a copy of the game $S'$ under
$x\mapsto\lfloor x/g\rfloor$, proving the nim-value and zero-set
formulas. The outcome version is \cite[Proposition~2.7]{MiklosPost}.
Write $A=\G_{S'}$, $D=\G_{gS'}$, $p'=p(S')$, $q'=q(S')$.
The tail of $A$ is not constant, since a constant value would also
occur among its options and could not be their mex. Thus $D$ has
arbitrarily late changes, all at multiples of $g$. Any eventual
period $t$ carries such a change to another, so $g\mid t$ and $t/g$
is an eventual period of $A$. Since $gp'$ is a period of $D$ from
$gq'$, its least period is $gp'$. For this period, agreement at $x$
is equivalent to $A(\lfloor x/g\rfloor+p')=A(\lfloor x/g\rfloor)$.
If $q'>0$, minimality gives failure at $x=gq'-1$; if $q'=0$, it holds from $0$.
Hence the least pre-period is $gq'$.
\end{proof}

\begin{remark}\label{rem:a1}
The case $a=1$ is already known at the level of the full
Sprague--Grundy sequence. Zhang
\cite[Example~2.4 and Proposition~2.5]{Zhang} gives the formulas for
all $1<b<c$, including the odd-$b$, even-$c$ formula of Ho
\cite[Theorem~4]{Ho}. Table~\ref{tab:a1-known} collects the purely
periodic cases and their $\P$-position formulas.
Mikl\'os and Post give the corresponding outcome classification
\cite[Theorem~4.1]{MiklosPost}; zero outcome pre-period and outcome
period dividing $b+c$ are equivalent to $b+c-1$ being winning
\cite[Corollary~2.12.2]{MiklosPost}.
Our main results concern $a\ge2$; their least-period assertion does
not extend unchanged to $a=1$ (Remark~\ref{rem:a1-fails}).
Henceforth $a\ge2$ unless stated otherwise.
\end{remark}

\subsection*{From one move to two}

To describe the blocks of the two-move pattern, write
\[
  \delta=b-a=\eta a+\varepsilon,\quad
  \eta=\lfloor\delta/a\rfloor,\quad 0\le\varepsilon<a.
\]
The one-move game $\{a\}$ has alternating losing and winning blocks
of length $a$. Its increasing $\P$-position list and the number of
terms below an integer $t\ge0$ are
\[
  w_1(n)=n+a\lfloor n/a\rfloor,\qquad
  C_a(t)=a\Big\lfloor\frac{t}{2a}\Big\rfloor+\min(a,t\bmod2a).
\]
The second move cuts this pattern at $b$ and repeats the retained
$N=C_a(b)$ positions with step $a+b$. Here $x$ denotes a position
and $n$ its index, starting at $0$.

\begin{prop}\label{prop:base}
Put $w_n=w_1(n)$ and
\[
  N=C_a(b)=\begin{cases}
    (\eta/2+1)a,&\eta\text{ even},\\
    \delta-(\eta-1)a/2,&\eta\text{ odd}.
  \end{cases}
\]
The base game $B=\{a,b\}$ has $q=0$, and $a+b$ is a period of $\G_B$.
Moreover,
\begin{align*}
  \mathcal Z&:=\W_0(B)\cap[0,a+b)=\{w_n:0\le n<N\},\\
  w_2(n)&:=w_B(n)=(a+b)\lfloor n/N\rfloor+w_1(n\bmod N).
\end{align*}
For even $\eta$, this simplifies to
$w_B(n)=n+a\lfloor n/a\rfloor+\varepsilon\lfloor n/N\rfloor$.
The set $\mathcal Z$ consists of the disjoint blocks
$[2ja,(2j+1)a)$ for $0\le j\le\lfloor\eta/2\rfloor$, together with
$[(\eta+1)a,(\eta+1)a+\varepsilon)$ when $\eta$ is odd.
In particular, $\mathcal Z=[0,a)$ for $b<2a$, and always
$\max\mathcal Z\le b-1$.
\end{prop}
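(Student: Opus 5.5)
The proof will construct an explicit candidate set and show that it satisfies the two-move recurrence; Lemma~\ref{lem:unique} then identifies it with $\W_0(B)$. Put $\mathcal Z_0=\{w_1(n):0\le n<N\}$, which is the set of $\P$-positions of $\{a\}$ lying in $[0,b)$, so $N=C_a(b)$. Define $T=\mathcal Z_0+(a+b)\N$. The first check is that $\max\mathcal Z_0\le b-1$, which holds by the definition of $C_a(b)$. This gives $T\cap[0,a+b)=\mathcal Z_0$, and the increasing enumeration of $T$ is exactly $w_2(n)=(a+b)\lfloor n/N\rfloor+w_1(n\bmod N)$. The block description is then read off from $w_1$: the $\P$-positions of $\{a\}$ are the blocks $[2ja,(2j+1)a)$. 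Intersecting these with $[0,b)=[0,(\eta+1)a+\varepsilon)$ gives the full blocks for $j\le\lfloor\eta/2\rfloor$, since $(2j+1)a\le(\eta+1)a\le b$ when $2j\le\eta$. When $\eta$ is odd it also gives the partial block $[(\eta+1)a,(\eta+1)a+\varepsilon)$. Counting the sizes yields the two formulas for $N$. The case $b<2a$ is $\eta=0$, which gives $\mathcal Z=[0,a)$.

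The main step is to verify the recurrence $x\in T\iff x-a\notin T\ \wedge\ x-b\notin T$. Write $x=k(a+b)+r$ with $0\le r<a+b$, and work with residues modulo $a+b$. Membership in $T$ depends only on $r$, except that negative positions are not in $T$. Note that $x-b\equiv r+a$ and $x-a\equiv r+b$ modulo $a+b$. The verification splits into four cases.
\begin{enumerate}[(i)]
\item If $r\in\mathcal Z_0$ and $x\ge a$, then $x-a\notin T$. When $r\ge a$ this follows from the $\{a\}$-pattern inside $[0,b)$. When $r<a$, the residue of $x-a$ is $r+b\in[b,a+b)$, which avoids $\mathcal Z_0$.
\item Likewise $x-b\notin T$. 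When $r\ge b$ this is immediate since $r\notin\mathcal Z_0$ is then impossible, so only $r<b$ matters. For $r<b$ the residue of $x-b$ is $r+a$. If $r+a<b$, then $r+a\notin\mathcal Z_0$ because $\{a\}$-$\P$-positions are never $a$ apart. If $r+a\ge b$, then $r+a\in[b,a+b)$, which lies outside $\mathcal Z_0$.
\item If $r\notin\mathcal Z_0$ and $a\le r<b$, then $r-a\in\mathcal Z_0$ by the $\{a\}$-recurrence, so $x-a\in T$.
\item If $r\ge b$, then $r-b\in[0,a)\subseteq\mathcal Z_0$, so $x-b\in T$.
\end{enumerate}
The case $r<a$ with $r\notin\mathcal Z_0$ cannot occur.

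The expected obstacle is the boundary bookkeeping: the cutoff at $b$ within the $\{a\}$-pattern, the wraparound modulo $a+b$, and positions $x<a$ or $x<b$ where an option does not exist. Once the recurrence holds, Lemma~\ref{lem:unique} gives $\W_0(B)=T$, which is purely periodic with period $a+b$.

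Periodicity of the full $\G_B$ with $q=0$ requires more than the zero set. The plan is to apply the same argument to the sets $\W_k$ using the mex recurrence. Alternatively, one can note that $\G_B(x)\in\{0,1,2\}$ and that $\W_1$ is determined by the rule $x\in\W_1\iff x\notin\W_0$ and no option lies in $\W_1$. Each such recurrence has a unique solution, and if $\W_0$ is $(a+b)$-periodic from $0$, the induction shows the shifted solution also satisfies it, provided the initial segment $[0,a+b)$ behaves compatibly. More concretely, I expect a direct closed form: $\G_B(x)=1$ iff $r-a\in\mathcal Z_0$ or $r-b\in\mathcal Z_0$ with $r\notin\mathcal Z_0$, and $\G_B(x)=2$ otherwise. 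The verification would check this on residues exactly as above, which shows $\G_B(x+a+b)=\G_B(x)$ for all $x\ge0$. Alternatively, I would cite the two-move description of Mikl\'os and Post, as the paper indicates.
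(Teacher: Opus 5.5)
Your treatment of the zero set is correct, and it takes a different route from the paper. The paper reads the blocks of $\mathcal Z$ off the outcome period word of Mikl\'os and Post. You instead verify the two-move recurrence for $T=\mathcal Z_0+(a+b)\N$ directly in four residue cases and conclude with Lemma~\ref{lem:unique}. Your argument is self-contained and uses only the $\{a\}$-pattern and the cutoff $\mathcal Z_0\subseteq[0,b)$. It removes the dependence on the citation. You omit the even-$\eta$ simplification, but it is one line: $a\mid N$ and $a+b-2N=\varepsilon$.

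The gap is the claim that $\G_B$ itself has $q=0$ and period $a+b$; this is part of the statement and concerns nim-values, not only $\P$-positions. As written, none of your routes establishes it.

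\begin{enumerate}[(i)]
\item Your concrete closed form is false. Every residue $r\notin\mathcal Z_0$ has $r-a\in\mathcal Z_0$ or $r-b\in\mathcal Z_0$. So your rule assigns value $1$ to every position outside $\W_0(B)$ and never produces the value $2$. For $(a,b)=(3,8)$, the position $8$ has options $5$ and $0$, with values $1$ and $0$, so $\G_B(8)=2$. In general the value-$2$ positions are those with residue in the set $U$ of Remark~\ref{rem:Zfacts}, which is nonempty unless $b$ is an odd multiple of $a$.
\item Citing Mikl\'os and Post does not help, because their results are at the outcome level.
\item Your shifted-recurrence sketch leaves the decisive boundary compatibility unchecked.
\end{enumerate}

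The missing ingredient is Ferguson's pairing (Lemma~\ref{lem:ferguson}, which does not depend on this proposition). Since $\G_B\le2$, the value-$1$ set is $\W_0(B)+a$, and the value-$2$ set is the complement of $\W_0(B)\cup(\W_0(B)+a)$. The translate $\W_0(B)+a$ is $(a+b)$-periodic from $0$:
\begin{enumerate}[(i)]
\item For $y\ge a$, this follows from the periodicity of $\W_0(B)$.
\item For $y<a$, we have $y\notin\W_0(B)+a$, and also $y+a+b\notin\W_0(B)+a$. The latter holds because $y+b$ has residue in $[b,a+b)$, which misses $\mathcal Z$ since $\max\mathcal Z\le b-1$.
\end{enumerate}
Hence all three value classes, and therefore $\G_B$, are $(a+b)$-periodic from $0$.
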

\begin{proof}
The outcome periodicity and period word are
\cite[Theorems~3.3 and~3.4]{MiklosPost}, the latter attributed there to
\cite[vol.~3, p.~530]{berlekamp2004winning}. With quotient $\eta+1$
and remainder $\varepsilon$ on dividing $b$ by $a$, that word is
\[
  (0^a1^a)^{(\eta+1)/2}0^\varepsilon1^a\quad(\eta\text{ odd}),
  \qquad (0^a1^a)^{(\eta+2)/2}1^\varepsilon\quad(\eta\text{ even}),
\]
where $0$ marks a $\P$-position. These zeros give the stated blocks.
Since $w_1(ja+r)=2ja+r$ for $0\le r<a$, their increasing list is
the first $N$ terms of $w_1$; repeating it gives $w_B$.
For even $\eta$, $a\mid N$ and $a+b-2N=\varepsilon$, giving the
simplification. Finally, a two-move game has values only $0,1,2$.
Ferguson's pairing (Lemma~\ref{lem:ferguson}, independent of this
proposition) identifies the value-$1$ set as $\W_0(B)+a$.
Because $\max\mathcal Z+a<a+b$, both this set and $\W_0(B)$ repeat
from $0$ with period $a+b$, as does their complement, the value-$2$ set.
\end{proof}

\begin{remark}\label{rem:base-period}
The least outcome period is $2a$ when $b$ is an odd multiple of $a$,
and $a+b$ otherwise \cite[Theorem~3.4]{MiklosPost}.
The constructions use only the period $a+b$; this least-period
distinction enters the inert minimality argument.
\end{remark}

\begin{remark}\label{rem:Zfacts}
The block description gives three facts used below.
\textup{(i)} $\mathcal Z\cap(\mathcal Z+a)=\varnothing$, and
$\mathcal Z+a\subseteq[0,a+b)$.
\textup{(ii)} There is a disjoint partition
$[0,a+b)=\mathcal Z\sqcup(\mathcal Z+a)\sqcup U$, where
\[
  U=\begin{cases}
    [(\eta+2)a,a+b),&\eta\text{ even},\\
    [(\eta+1)a+\varepsilon,(\eta+2)a),&\eta\text{ odd}.
  \end{cases}
\]
Moreover, $(U+a)\bmod(a+b)\subseteq[0,a)\subseteq\mathcal Z$.
\textup{(iii)} Since $b=(\eta+1)a+\varepsilon\equiv-a\pmod{a+b}$, subtracting $b$
on the circle is adding $a$. In particular,
\begin{equation}\label{eq:Wcoll}
  \W_0(B)\cap(\W_0(B)+a)=\varnothing
  =\W_0(B)\cap(\W_0(B)+b).
\end{equation}
These are the collision properties of the base pattern.
\end{remark}

\begin{lemma}[Ferguson's pairing property \cite{Ferguson}]\label{lem:ferguson}
$\G_S(x)=0\iff\G_S(x+a)=1$. Hence the value-$1$ set is
$\W_1=\W_0+a$.
\end{lemma}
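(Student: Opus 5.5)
The statement is a general fact about any finite subtraction set $S$ whose least element is $a$. I will prove it by strong induction on $x$, showing both directions of $\G_S(x)=0\iff\G_S(x+a)=1$ at $x$ while assuming the full equivalence at every $x'<x$. The only structural facts needed are:
\begin{itemize}
\item every move $s\in S$ satisfies $s\ge a$;
\item positions $y<a$ have no options, so $\G_S(y)=0$ there.
\end{itemize}
The second direction is then immediate: $\W_1=\W_0+a$ holds because every value-$1$ position is at least $a$, since positions below $a$ have value $0$.

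For the forward direction, assume $\G_S(x)=0$. Since $x$ is an option of $x+a$, we get $\G_S(x+a)\ge1$. It remains to rule out an option $y=x+a-s$ of $x+a$ with $\G_S(y)=1$.
\begin{itemize}
\item The move $s=a$ gives $y=x$, which has value $0$, so we may take $s>a$. Then $y<x$.
\item A position of value $1$ has an option, so $y\ge a$. Hence $y-a=x-s\ge0$, and $x-s$ is a legal option of $x$.
\item The induction hypothesis at $y-a<x$ turns $\G_S(y)=1$ into $\G_S(x-s)=0$.
\end{itemize}
This contradicts $\G_S(x)=0$, so the mex at $x+a$ is exactly $1$.

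For the reverse direction, assume $\G_S(x+a)=1$. Then $\G_S(x)\ne1$, because $x$ is an option of $x+a$. Suppose for contradiction that $\G_S(x)\ge2$, or more generally $\G_S(x)\ne0$. Then $x$ has an option $x-s$ with $\G_S(x-s)=0$.
\begin{itemize}
\item The induction hypothesis at $x-s<x$ gives $\G_S(x-s+a)=1$.
\item The position $x-s+a$ is an option of $x+a$ via the move $s$, since $s\le x\le x+a$.
\end{itemize}
So $x+a$ has an option of value $1$, contradicting $\G_S(x+a)=1$. Hence $\G_S(x)=0$.

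The only delicate point is the forward direction. There we must ensure that the induction hypothesis can be applied to $y-a$, that is, that $y\ge a$. This is where the minimality of $a$ enters, through the observation that positions below $a$ all have value $0$. Otherwise the argument is a routine mex bookkeeping. No property of the particular three-move set, and nothing from Proposition~\ref{prop:base}, is used, which is consistent with the independence claimed in that proof.
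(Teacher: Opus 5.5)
Your proof is correct. The paper does not argue this lemma; it cites Ferguson \cite{Ferguson} and \emph{Winning Ways} \cite{berlekamp2004winning}, remarking only that the property holds for every subtraction game with least move $a$, including the base game $B$.

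Your simultaneous strong induction supplies the standard argument behind that citation, and each step appeals only to smaller positions:
\begin{itemize}
\item The forward step uses the reverse implication at $x-s$. You first check that a value-$1$ option $y$ satisfies $y\ge a$, so that $x-s\ge0$ is a legal option of $x$.
\item The reverse step uses the forward implication at $x-s$.
\end{itemize}

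Your version is self-contained and makes explicit that the only inputs are $a=\min S$ and the fact that positions in $[0,a)$ are terminal. That is exactly what justifies the paper's use of the lemma for $B=\{a,b\}$ inside the proof of Proposition~\ref{prop:base} without circularity. The citation gives brevity and attribution.

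One minor wording point: the ``second direction'' in your opening paragraph is really the deduction $\W_1=\W_0+a$ from the equivalence, not a direction of the equivalence itself.
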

\begin{proof}
This is Ferguson's pairing property for the least move $a=\min S$
\cite{Ferguson,berlekamp2004winning}. It applies to every subtraction
game, including the base $B$.
\end{proof}

\begin{lemma}\label{lem:descent}
If $\G_B(y)=2$, then $y\ge2a$ and $y-2a\in\W_0(B)$.
\end{lemma}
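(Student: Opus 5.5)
We argue directly on the circle $\Z/(a+b)\Z$, using the periodicity of $\G_B$ from Proposition~\ref{prop:base} and the partition of Remark~\ref{rem:Zfacts}(ii). The base game has only values $0,1,2$, and by Lemma~\ref{lem:ferguson} the value-$1$ set is $\W_0(B)+a$. So the value-$2$ set is exactly the set of positions whose residue lies in $U$. Thus it suffices to show two things: every $y$ with $y\bmod(a+b)\in U$ satisfies $y\ge2a$, and $y-2a\in\W_0(B)$.

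\emph{Lower bound.} Every element of $U$ is at least $(\eta+1)a+\varepsilon\ge a$. If $\eta$ is even, then $U$ starts at $(\eta+2)a\ge2a$. If $\eta$ is odd, then $\eta\ge1$, so $U$ starts at $(\eta+1)a+\varepsilon\ge2a$. Since $y\ge y\bmod(a+b)$, we get $y\ge2a$.

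\emph{Descent.} Write $y=k(a+b)+u$ with $u\in U$. We must show $u-2a\in\mathcal Z$ modulo $a+b$; because $y-2a\ge0$, periodicity of $\W_0(B)$ from $0$ then gives $y-2a\in\W_0(B)$.
\begin{itemize}
\item For even $\eta$, we have $u-2a\in[\eta a,b-a)=[\eta a,\eta a+\varepsilon)$. This lies inside the block $[\eta a,(\eta+1)a)$, which is the block $j=\eta/2$ of $\mathcal Z$, because $\varepsilon<a$.
\item For odd $\eta$, we have $u-2a\in[(\eta-1)a+\varepsilon,\eta a)\subseteq[(\eta-1)a,\eta a)$. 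This is the block $j=(\eta-1)/2$ of $\mathcal Z$.
\end{itemize}
In both cases $u-2a\ge0$, so no reduction modulo $a+b$ is needed.

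An alternative, structural route would use the mex rule. A value-$2$ position $y$ has an option of value $1$, that is, $y-a$ or $y-b$ lies in $\W_0(B)+a$. Combined with the collision relations \eqref{eq:Wcoll}, this would force $y-2a\in\W_0(B)$. The case where only $y-b$ has value $1$ is awkward on that route, which is why we prefer the explicit block computation above.

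The only delicate point is getting the endpoints of $U$ right in the two parity cases and confirming $\varepsilon<a$, which places $u-2a$ inside a single block. With that checked, the result is just two interval comparisons.
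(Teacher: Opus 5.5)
Your proof is correct, but it takes a different route from the paper. You identify the value-$2$ set of $B$ with the positions whose residue lies in $U$, using the $(a+b)$-periodicity from Proposition~\ref{prop:base} and the partition of Remark~\ref{rem:Zfacts}(ii). You then check by two interval comparisons that $U-2a$ lies inside the last full block of $\mathcal Z$. Your endpoints of $U$ are right, and so are the inclusions $[\eta a,\eta a+\varepsilon)\subseteq[\eta a,(\eta+1)a)$ and $[(\eta-1)a+\varepsilon,\eta a)\subseteq[(\eta-1)a,\eta a)$.

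The paper uses exactly the structural route you set aside. Since $\G_B(y)=2$ is the mex of at most two option values, both options $y-a$ and $y-b$ exist, and their values are $0$ and $1$ in some order. If $\G_B(y-a)=0$, Ferguson's pairing (Lemma~\ref{lem:ferguson}) would give $\G_B(y)=1$. Hence $\G_B(y-a)=1$, i.e.\ $y-a\in\W_0(B)+a$, which yields both $y\ge2a$ and $y-2a\in\W_0(B)$. So the case you called awkward, where only $y-b$ has value $1$, takes one line: it forces $\G_B(y-a)=0$, which pairing forbids. No collision relations are needed.

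The paper's argument is shorter and does not depend on the block description or on periodicity; it holds for any two-move game. Yours rests on the Mikl\'os--Post period word behind Proposition~\ref{prop:base}. In return, it places $y-2a$ in a specific block of $\mathcal Z$ and makes the residue description of the value-$2$ set explicit, and that residue form is what Lemma~\ref{lem:Vstructure}(i) uses later.
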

\begin{proof}
Both options must exist and have values $0,1$. The value at $y-a$
cannot be $0$, since pairing would give $\G_B(y)=1$.
Thus $\G_B(y-a)=1$, so $y-a\in\W_0(B)+a$.
\end{proof}

\subsection*{Differences and avoidance}

An \emph{arc} on $\Z/(a+b)\Z$ is the image of an integer interval of
length less than $a+b$. We use integer endpoints and reduce modulo
$a+b$. For sets on the circle, $Y-X=\{y-x:y\in Y,x\in X\}$;
unlike scalar subtraction in $\N$, no truncation occurs.

\begin{lemma}[Avoidance]\label{lem:avoid}
\textup{(i)} For $X,Y\subseteq\Z/(a+b)\Z$ and $\rho\in\Z/(a+b)\Z$,
\[
  (\rho+X)\cap Y=\varnothing\iff\rho\notin Y-X.
\]
\textup{(ii)} If $X=\bigsqcup_i[x_i,x_i']$ and $Y=\bigsqcup_j[y_j,y_j']$ are
unions of arcs with endpoints included, then
\[
  Y-X=\bigcup_{i,j}[y_j-x_i',y_j'-x_i].
\]
Each interval on the right is reduced modulo $a+b$ and has length
$|[x_i,x_i']|+|[y_j,y_j']|-1$. It is an arc when the sum of the two
lengths is at most $a+b$, and covers the circle otherwise.
When all sums are at most $a+b$, as in our applications, avoidance
is therefore the complement of at most one arc per pair of input arcs.
\end{lemma}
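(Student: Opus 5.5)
Part (i) is a direct unwinding of the definitions on the finite cyclic group $\Z/(a+b)\Z$. The plan is to prove both directions at once by negating each side. The set $(\rho+X)\cap Y$ is nonempty exactly when there are $x\in X$ and $y\in Y$ with $\rho+x=y$. In the group this is the same as $\rho=y-x$, that is, $\rho\in Y-X$. Negating both statements gives the equivalence. No truncation issue arises, because circle subtraction was defined without one.

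For (ii), the first step is to use bilinearity of the difference set over unions: $Y-X=\bigcup_{i,j}\bigl([y_j,y_j']-[x_i,x_i']\bigr)$. This reduces the claim to a single pair of arcs. Each arc lifts to an integer interval of length less than $a+b$. At the integer level, $\{y-x: y_j\le y\le y_j',\ x_i\le x\le x_i'\}$ is exactly the integer interval $[y_j-x_i',\,y_j'-x_i]$. Its extreme values are attained at the corners, and every intermediate integer is reached by moving $y$ or $x$ one step at a time. The circle difference is then the image of this interval modulo $a+b$. Its length is $(y_j'-x_i)-(y_j-x_i')+1=|[y_j,y_j']|+|[x_i,x_i']|-1$.

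The last step handles the two cases. If the two lengths sum to at most $a+b$, the interval has length at most $a+b-1<a+b$, so its image is an arc by definition. Otherwise its length is at least $a+b$, so it contains a full set of residues and covers the circle. In the first case, combining with (i), the condition $\rho\notin Y-X$ says that $\rho$ lies in the complement of the union of at most one arc per pair $(i,j)$. That is the final sentence of the statement.

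I expect no real obstacle here. The only point needing care is the lift: the intervals must be chosen with integer endpoints so that $x_i\le x_i'$ and $y_j\le y_j'$ as integers, which is possible because each arc has length less than $a+b$. The reduction modulo $a+b$ is applied only after the integer difference interval has been formed.
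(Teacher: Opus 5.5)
Your proposal is correct and follows essentially the same route as the paper: part (i) by unwinding $\rho+x=y\iff\rho=y-x$, and part (ii) by reducing to one pair of arcs, observing that the integer differences of two lifted intervals fill exactly the interval between the extreme differences, and then taking images modulo $a+b$ and the union over pairs. Your added details (the lift with $x_i\le x_i'$, the explicit length count, and the arc/whole-circle case split) just spell out steps the paper leaves implicit.
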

\begin{proof}
A collision means $\rho+x=y$, equivalently $\rho=y-x$.
The integer differences of two intervals fill exactly the interval
between their extreme differences; take their images on the circle
and then the union over all pairs.
\end{proof}

\begin{lemma}[Index form]\label{lem:diffspec}
For $x\in\Z$ and integers $d\ge0$, $m\ge1$,
\[
  \Big\lfloor\frac{x}{m}\Big\rfloor-
  \Big\lfloor\frac{x-d}{m}\Big\rfloor
  =\Big\lfloor\frac{d}{m}\Big\rfloor+\chi,\qquad
  \chi=[x\bmod m<d\bmod m]\in\{0,1\}.
\]
Here $[A]$ is $1$ if the statement $A$ holds and $0$ otherwise.
Thus for $w(n)=n+\sum_i k_i\lfloor(n+s_i)/d_i\rfloor$, with integers
$k_i,d_i\ge1$ and $0\le s_i<d_i$, the difference $w(n)-w(n-d)$
for $n\ge d\ge0$ is $d+\sum_i k_i\lfloor d/d_i\rfloor$ plus a
subset sum of the $k_i$. The subset is specified by arcs modulo $d_i$.
\end{lemma}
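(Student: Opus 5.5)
The plan is to prove the displayed identity first and then derive the statement about $w(n)-w(n-d)$ by summing it over the floor terms.

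For the identity, divide with remainder, $x=qm+r$ and $d=sm+t$, with $0\le r,t<m$. Then
\[
  x-d=(q-s)m+(r-t).
\]
If $r\ge t$, then $0\le r-t<m$. Hence $\lfloor (x-d)/m\rfloor=q-s$, and the difference of floors is $s=\lfloor d/m\rfloor$; here $\chi=0$.

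If $r<t$, write $r-t=-m+(m+r-t)$. Since $0<m+r-t<m$, we get $\lfloor (x-d)/m\rfloor=q-s-1$, so the difference is $s+1$; here $\chi=1$.

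In both cases $\chi=[x\bmod m<d\bmod m]$, as claimed. The argument holds for every integer $x$, including negative ones, because we use the floor-based remainder $x\bmod m\in[0,m)$.

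For the consequence, write
\[
  w(n)-w(n-d)=d+\sum_i k_i\Bigl(\Big\lfloor\tfrac{n+s_i}{d_i}\Big\rfloor-\Big\lfloor\tfrac{n+s_i-d}{d_i}\Big\rfloor\Bigr).
\]
Apply the identity to each term with $x=n+s_i$ and $m=d_i$. Each bracket becomes $\lfloor d/d_i\rfloor+\chi_i$, where
\[
  \chi_i=[(n+s_i)\bmod d_i<d\bmod d_i].
\]
So the difference equals $d+\sum_i k_i\lfloor d/d_i\rfloor+\sum_i k_i\chi_i$. The last sum is the sum of $k_i$ over the subset $\{i:\chi_i=1\}$, which is the subset-sum claim.

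It remains to show that membership of $i$ in this subset is an arc condition modulo $d_i$. The condition $\chi_i=1$ says $n\bmod d_i$ lies in $[-s_i,\,-s_i+(d\bmod d_i))$ reduced modulo $d_i$. This is an arc of length $d\bmod d_i<d_i$, or empty when $d_i\mid d$.

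The hypothesis $n\ge d$ only keeps $n-d$ a valid index; the floor identity itself does not need it. I do not expect a real obstacle. The one point needing care is the sign convention for $x\bmod m$ when $x<0$.
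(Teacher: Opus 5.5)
Your proposal is correct and follows essentially the same route as the paper. You divide with remainder to get $\lfloor (x-d)/m\rfloor=\lfloor x/m\rfloor-\lfloor d/m\rfloor-[x\bmod m<d\bmod m]$, then apply this term by term with $x=n+s_i$ and $m=d_i$. Your explicit description of the condition $\chi_i=1$ as an arc of length $d\bmod d_i$ in the residue of $n$ modulo $d_i$ fills in a step the paper leaves implicit.
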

\begin{proof}
Write $x=um+v$ and $d=qm+r$, with $0\le v,r<m$.
Then $\lfloor(x-d)/m\rfloor=u-q-[v<r]$.
Apply this identity term by term, with
$\chi_i=[(n+s_i)\bmod d_i<d\bmod d_i]$.
\end{proof}

\begin{lemma}[Inert differences]\label{lem:idiff}
The set $\mathcal I=\{m\in[0,a+b):
\mathcal Z\cap((\mathcal Z+m)\bmod(a+b))=\varnothing\}$ is
\[
  \mathcal I=\begin{cases}
    [a,b],&b\le2a,\\
    \{ja:1\le j\le k,\ j\text{ odd}\},&b=ka,\ k\text{ odd},\\
    \{a,b\},&\text{otherwise}.
  \end{cases}
\]
\end{lemma}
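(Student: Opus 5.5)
The plan is to compute $\mathcal I$ as a complement of a difference set. By Lemma~\ref{lem:avoid}(i) with $X=Y=\mathcal Z$, we have $m\in\mathcal I$ iff $m\notin\mathcal Z-\mathcal Z$ on $\Z/(a+b)\Z$. Lemma~\ref{lem:avoid}(ii) then writes $\mathcal Z-\mathcal Z$ as a union of arcs, one for each ordered pair of blocks from Proposition~\ref{prop:base}. Write $n=a+b$ and $L=\lfloor\eta/2\rfloor$. The full blocks $[2ia,(2i+1)a)$, $0\le i\le L$, contribute the arcs $[2(i-i')a-(a-1),\,2(i-i')a+(a-1)]$. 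When $\eta$ is odd and $\varepsilon>0$, the partial block $[(\eta+1)a,(\eta+1)a+\varepsilon)$ contributes further arcs. Two facts hold in every case. First, the set $\mathcal Z-\mathcal Z$ is symmetric under $m\mapsto-m$. Second, $a\in\mathcal I$ by Remark~\ref{rem:Zfacts}(i), and hence $b\equiv-a\in\mathcal I$.

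\emph{Case $b\le 2a$.} Here $\mathcal Z=[0,a)$ by Proposition~\ref{prop:base}; this includes $b=2a$, where $\eta=1$ and $\varepsilon=0$. The difference set is then the single arc $[-(a-1),a-1]$, and its complement in $[0,n)$ is $[a,n-a]=[a,b]$.

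\emph{Case $b=ka$ with $k$ odd, $k\ge3$.} Then $\eta=k-1$ is even, $\varepsilon=0$ and $n=(k+1)a$. So $\mathcal Z$ is exactly the residues of $[0,a)+2a\Z$, and this set is well defined on the circle because $2a\mid n$. Translating by $m$ keeps $\mathcal Z$ disjoint from itself iff $m\equiv a\pmod{2a}$. This gives $m=ja$ with $j$ odd and $1\le j\le k$.

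\emph{Remaining case ($b>2a$, $b$ not an odd multiple of $a$).} We must show that every $m\in[1,n)\setminus\{a,b\}$ lies in $\mathcal Z-\mathcal Z$; note $m=0$ lies there trivially.
\begin{enumerate}[(i)]
\item Without wrap-around, the full-block arcs cover every integer in $(-(2L+1)a,(2L+1)a)$ that is not an odd multiple of $a$.
\item Each odd multiple $ja$ in the range, $3\le j$, must therefore be caught by a wrapped arc. We reduce $ja-n$ and measure its distance to the nearest even multiple $2ta$ with $|t|\le L$.
\item If $\eta$ is even, then $\varepsilon>0$, because $\varepsilon=0$ would make $b$ an odd multiple. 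We get $ja-n=(j-\eta-2)a-\varepsilon$ with $j-\eta-2$ odd. So $ja-n$ lies at distance $a-\varepsilon<a$ from an even multiple of $a$, and is covered.
\item If $\eta$ is odd, we have $\varepsilon\ge1$ by the same reasoning as in (iii). Here we use the differences between the partial block and the full blocks, namely $[(\eta+1-2i)a-(a-1),\,(\eta+1-2i)a+\varepsilon-1]$ and their negatives, together with the wrap by $n=(\eta+2)a+\varepsilon$. These should fill the odd multiples $ja$ with $3\le j\le 2L+1$.
\item Finally, positions near the top of $[0,n)$ beyond $(2L+1)a$ are handled through the symmetry $m\leftrightarrow n-m$. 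This leaves exactly $a$ and $n-a=b$.
\end{enumerate}

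The main obstacle is the bookkeeping in (iv) and (v). I must check that the wrapped or partial-block arcs reach every odd multiple $ja\ne a,b$ with no gap at the boundary. The hardest part is the parity of $\eta$ against the position of the irregular gap $U$ of Remark~\ref{rem:Zfacts}(ii). I would first verify the claim on small examples such as $(a,b)=(3,8)$. Then I would write the interval inequalities explicitly, in the manner of Appendix~\ref{app:intervals}.
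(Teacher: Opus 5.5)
\textbf{Gap in the ``otherwise'' case.} Your cases $b\le2a$ and $b=ka$ with $k$ odd are correct and agree with the paper. The ``otherwise'' case, however, is only a plan, and three of its steps fail as stated.

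First, in (iv) the claim that $\eta$ odd forces $\varepsilon\ge1$ is false. If $\eta$ is odd and $\varepsilon=0$, then $b=(\eta+1)a$ is an \emph{even} multiple of $a$ (e.g.\ $b=4a$). Such $b$ belongs to this case, so these shapes are silently dropped; their partial block is also empty and contributes no arcs.

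Second, the mechanism of (iv) cannot work. Put $Z_*=[(\eta+1)a,(\eta+1)a+\varepsilon)$ and $Z_i=[2ia,(2i+1)a)$. Consider the arcs $Z_*-Z_i=[(\eta-2i)a+1,(\eta+1-2i)a+\varepsilon-1]$, their negatives $[(2i+1)a+1,(2i+2)a+\varepsilon-1]$ modulo $n$, and $Z_*-Z_*=[1-\varepsilon,\varepsilon-1]$. Since $\varepsilon\le a-1$, each begins just after an odd multiple of $a$ and ends before the next one. So no partial-block arc contains any odd multiple $ja$.

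Third, the symmetry in (v) does not close the case $\eta=1$, i.e.\ $2a<b<3a$, which includes the running example $(3,8)$. There $L=0$ and the full blocks give only $[1-a,a-1]$. The map $m\mapsto n-m$ sends $(a,b)$ onto itself, so it gains nothing.

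\textbf{Repair, and the paper's route.} Your framework can be repaired. The odd multiples $ja$ with $3\le j\le\eta$ are covered by wrapped full-block arcs, exactly as in your (iii). For $\eta$ odd, $ja-n=(j-\eta-2)a-\varepsilon$ lies within $\varepsilon\le a-1$ of $2ta$, where $t=(j-\eta-2)/2\in[-L,-1]$; the distance is $0$ if $\varepsilon=0$. Hence for $\eta\ge2$ the full-block arcs alone leave exactly $\{a,b\}$ uncovered. For $\eta=1$ the only partial-block arc needed is $Z_*-Z_0=[a+1,b-1]$.

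The paper avoids this bookkeeping differently. For $b>2a$ it uses $[0,a)\subseteq A=\mathcal Z\cap[0,\delta)$. A shift $\sigma$ then has a witness $\alpha\in[0,a)$ unless the length-$a$ window starting at $-\sigma$ misses $\mathcal Z$, i.e.\ $-\sigma\in\Omega$. Lemma~\ref{lem:windows} lists $\Omega$, and the few surviving shifts receive explicit witnesses. This proves the stronger identity $A-\mathcal Z=\Z/(a+b)\Z\setminus E$, which also yields $\Th_{c+a}$.
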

By Lemma~\ref{lem:avoid}, $\mathcal I$ is the complement of
$\mathcal Z-\mathcal Z$. Its computation from the blocks of
Proposition~\ref{prop:base} is in Appendix~\ref{app:intervals},
together with the companion gap-window calculation.

\section{The rotation criterion}\label{sec:rotation}\label{sec:sieve}\label{sec:nonadditive}\label{sec:char}

Adding $c$ to $B=\{a,b\}$ leaves the game unchanged below $c$.
We determine the first affected interval and ask when the resulting block
can repeat. For $P=c+a$ or $c+b$, two moves are opposite modulo $P$,
so only one complementary move difference remains to be excluded at a join.
The resulting conditions depend on the residue of $c$ in the base period.

Write $W=\W_0(B)$ and let $P$ denote a candidate period. Recall that
$W$ has period $a+b$ and residue set $\mathcal Z$ of size $N$. Put
\[
 \rho=c\bmod(a+b)\quad\text{(the \emph{angle})},\qquad
 K=\lfloor c/(a+b)\rfloor\quad\text{(the \emph{winding number})}.
\]
Thus $c=K(a+b)+\rho$. Translates of $\mathcal Z$ are read modulo $a+b$.
After establishing a period, we enumerate the $\P$-positions and use
\S\ref{sec:lfunction} to recover the other nim-values and prove minimality.

\subsection*{The first interval affected by \texorpdfstring{$c$}{c}}

Let $Y(x)$ be the indicator of $\W_0(S)$, and let $z(x)$ be the indicator
of $W$. The two games agree below $c$, so $Y(x)=z(x)$ for $0\le x<c$.
In fact the entire interval $[c,c+b)$ is determined by the base game:
\[
  Y(c+u)=(1-z(u))(1-z(c+u-b))\qquad(0\le u<b).
\]
To see this, first suppose $z(u)=1$. The $c$-option is a $\P$-position,
so $Y(c+u)=0$. If instead $z(u)=0$, then $u\ge a$ and $z(u-a)=1$:
below $b$ the only available move in the base game is $a$. The position
$c+u-a$ is therefore an $\mathcal N$-position, since its $c$-move reaches
$u-a$. The $c$-option $u$ is also an $\mathcal N$-position. The remaining
option, $c+u-b$, lies in $[0,c)$, where the two games agree. Hence $c+u$
is a $\P$-position exactly when this $b$-option is an $\mathcal N$-position,
which proves the formula. In particular,
\[
  \W_0(S)\cap[0,c)=W\cap[0,c),\qquad
  \W_0(S)\cap[c,c+a)=\varnothing.
\]
No periodicity of $S$ has been assumed.

Every base $\P$-position in $[c,c+b)$ survives unless its $c$-option is
also a $\P$-position. Indeed, $z(c+u)=1$ forces $z(c+u-b)=0$ by the
base recurrence. But a base $\mathcal N$-position can become a $\P$-position
if both its $b$- and $c$-options are $\mathcal N$-positions. This possibility
must be excluded before the longer initial block can be described simply
by deleting positions from the base pattern.

\subsection*{The sieve and the repetition test}

The following definition repeats that deletion pattern with a proposed
step $P$. Its block is already the actual initial block when $P=c+a$;
for $P=c+b$, the sufficient criterion below will first establish this
agreement and then check the join.

\begin{defi}\label{def:sieve}
For $P\ge c$ put
\[
  T_P=T_P(a,b,c):=\big\{\,x\in\N:\ (x\bmod P)\in\W_0(B)\ \text{and}\ (x\bmod P)-c\notin\W_0(B)\,\big\},
\]
with the convention $y\notin\W_0(B)$ whenever $y<0$.
\end{defi}

\begin{lemma}\label{lem:sieve}
Let $P\ge c$. If $T_P$ satisfies the three-move recurrence
\[
  x\in T_P\iff(x-a\notin T_P)\wedge(x-b\notin T_P)\wedge(x-c\notin T_P)
\]
for every $x$ in the interval $[c,\,c+P)$, then $\W_0(S)=T_P$; in particular the
$\P$-positions of $S$ are purely periodic with period $P$.
\end{lemma}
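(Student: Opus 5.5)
By Lemma~\ref{lem:unique}, $\W_0(S)$ is the only set satisfying the three-move recurrence. It therefore suffices to show that $T_P$ satisfies the recurrence at \emph{every} $x\in\N$. The hypothesis supplies the window $[c,c+P)$, so two further steps remain: check $[0,c)$ directly, and propagate the window to all $x\ge c+P$ using the $P$-periodicity of $T_P$.

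\emph{Positions below $c$.} For $0\le x<c\le P$ we have $x\bmod P=x$ and $x-c<0$, so $x\in T_P\iff x\in W$. The $c$-move is unavailable at such $x$, and the options $x-a,x-b$ also lie in $[0,c)$, where $T_P$ agrees with $W$. Hence the three-move recurrence at $x$ is exactly the two-move recurrence for $W$ at $x$. This holds by Lemma~\ref{lem:unique} applied to $B$.

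\emph{Positions $x\ge c+P$.} By construction $T_P$ is $P$-periodic: membership of $x$ depends only on $x\bmod P$. Write $x=x_0+kP$ with $x_0\in[c,c+P)$ and $k\ge1$; such $x_0$ exists because $[c,c+P)$ is a complete residue system. Since $x_0\ge c>b>a$, all three options $x_0-a,x_0-b,x_0-c$ are non-negative. Consequently, for each $s\in S$ we get $x-s=(x_0-s)+kP$, and both points lie in $\N$ with the same residue. Periodicity then gives $x-s\in T_P\iff x_0-s\in T_P$, and similarly $x\in T_P\iff x_0\in T_P$. The recurrence at $x$ is thus literally the recurrence at $x_0$, which is the hypothesis. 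This is why the hypothesis window starts at $c$ rather than $0$: it is the first window in which no truncation of options occurs.

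The conclusion follows from uniqueness, $\W_0(S)=T_P$, and pure periodicity with period $P$ holds because $T_P$ is $P$-periodic from $0$. No step here is a real obstacle. The only care needed is the bookkeeping with the conventions: $y\notin W$ for $y<0$, and the truncated scalar subtraction in the definition of $T_P$ at residues below $c$. In particular, one must confirm that for $x\in[0,c)$ the sieve condition is vacuous, so that $T_P\cap[0,c)=W\cap[0,c)$.
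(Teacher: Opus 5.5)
Your proposal is correct and follows essentially the same route as the paper's proof. Below $c$ you use agreement with the base game, where $T_P\cap[0,c)=W\cap[0,c)$. For $x\ge c$ you transfer the tested window $[c,c+P)$ by the $P$-periodicity of $T_P$, noting that all options at the representative are nonnegative, and you then conclude with Lemma~\ref{lem:unique}.
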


\begin{proof}
\emph{Range $x<c$.} Here $\G_S=\G_B$, since $c$ is unavailable, and
Definition~\ref{def:sieve} gives $T_P\cap[0,c)=W\cap[0,c)$.
\emph{Range $x\ge c$.} Write $x=x'+kP$ with $c\le x'<c+P$ and $k\ge0$;
all three options at $x'$ are nonnegative, so $P$-periodicity transfers
the tested recurrence to $x$.
The recurrence therefore holds everywhere, and Lemma~\ref{lem:unique} applies.
\end{proof}

\begin{remark}\label{rem:MP-criterion}
Mikl\'os and Post's translating-zeros test
\cite[Lemmas 2.12--2.13 and Corollary 2.12.1]{MiklosPost} applies to the
computed outcome sequence. Here the proposed block is described directly
from $(a,b,c)$; below we identify it with the actual initial block and
check the join.
\end{remark}

Let $D=[a,b)\setminus W=[a,b)\setminus\mathcal Z$ be the base gaps
below $b$. Their interval decomposition follows from Proposition~\ref{prop:base}.

\begin{prop}\label{prop:sieveform}
Let $P\in\{c+a,\,c+b\}$, and let $T_P$ be the $c$-sieve of
Definition~\ref{def:sieve}. Then
\[
  T_P\cap[0,P)=\W_0(B)\cap J_P,\qquad
  J_P=\begin{cases}
   [0,c), & P=c+a,\\[2pt]
   [0,c)\,\cup\,(c+D), & P=c+b.
  \end{cases}
\]
If $\W_0(S)=T_P$, this is the initial $\P$-block of $S$.

\end{prop}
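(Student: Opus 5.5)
We unwind Definition~\ref{def:sieve} on $[0,P)$, where $x\bmod P=x$. Thus $T_P\cap[0,P)=\{x\in[0,P):x\in W,\ x-c\notin W\}$, with $x-c\notin W$ automatic for $x<c$. We therefore split $[0,P)$ into $[0,c)$ and $[c,P)$. On $[0,c)$ the set is exactly $W\cap[0,c)$, which agrees with $W\cap J_P$ in both cases. It remains to show that $T_P\cap[c,P)$ equals $\varnothing$ when $P=c+a$, and equals $W\cap(c+D)$ when $P=c+b$.

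Write $x=c+u$ with $0\le u<P-c$. The condition becomes $c+u\in W$ and $u\notin W$. For $P=c+a$ we have $u\in[0,a)$. By Proposition~\ref{prop:base}, $[0,a)\subseteq\mathcal Z\subseteq W$, so $u\in W$ and no $x$ survives. This gives $T_P\cap[c,c+a)=\varnothing$ and $J_{c+a}=[0,c)$.

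For $P=c+b$ we have $u\in[0,b)$. Since $\max\mathcal Z\le b-1<a+b$, $W\cap[0,b)=\mathcal Z\cap[0,b)$, and this contains $[0,a)$. Hence $u\notin W$ with $u<b$ holds exactly when $u\in[a,b)\setminus\mathcal Z=D$. The surviving set is therefore $\{c+u:u\in D,\ c+u\in W\}=W\cap(c+D)$. Combined with the first range, this gives $T_P\cap[0,P)=W\cap([0,c)\cup(c+D))$. The union is disjoint because $c+D\subseteq[c+a,c+b)$.

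The final sentence follows at once. If $\W_0(S)=T_P$, then $\W_0(S)\cap[0,P)=T_P\cap[0,P)$, which is the displayed set. It is the initial $\P$-block because $T_P$ has period $P$ by construction.

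The only point needing care is the identification $W\cap[0,b)\supseteq[0,a)$ together with the convention $y\notin W$ for $y<0$. Both come straight from the block description in Proposition~\ref{prop:base}, so I expect no real obstacle. As a consistency check, the result agrees with the earlier observation $\W_0(S)\cap[c,c+a)=\varnothing$ and with the formula $Y(c+u)=(1-z(u))(1-z(c+u-b))$: when $c+u\in W$, the base recurrence already forces $z(c+u-b)=0$.
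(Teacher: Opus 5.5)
Your proof is correct and matches the paper's argument: the deletion condition is vacuous below $c$, the inclusion $[0,a)\subseteq W$ removes all of $[c,c+a)$, and on $[c+a,c+b)$ the surviving points are exactly those $x\in W$ with $x-c\in D$.
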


\begin{proof}
Every position below the least move is a $\P$-position:
\begin{equation}\label{eq:lowblock}
 [0,a)\subseteq W.
\end{equation}
In Definition~\ref{def:sieve} the deletion condition is vacuous below $c$.
Above $c$, it excludes all of $[c,c+a)$ by \eqref{eq:lowblock}; for $P=c+b$, on
$[c+a,c+b)$ it retains exactly those $x\in W$ with $x-c\in D$.
\end{proof}

\emph{Repeating a known initial block.}
For the two constructions below, it is useful to separate the initial
calculation from the check at the join. Suppose $P>c$ and
$H=\W_0(S)\cap[0,P)$ has already been determined. If no two elements
of $H$ differ by any of
\[
  P-a,\qquad P-b,\qquad P-c,
\]
then $\W_0(S)=H+P\N$.

Indeed, put $T=H+P\N$. No two elements of $H$ differ by a move, since
$H$ is part of the actual $\P$-set. As every move is smaller than $P$,
a move between two elements of $T$ would give either such a difference
inside $H$ or one of the complementary differences just excluded.
Thus no move from $T$ reaches $T$. Conversely, if $x=r+kP\notin T$,
with $0\le r<P$ and $k\ge0$, the actual $\mathcal N$-position $r$ has an
available move $s\le r$ to a position $r-s\in H$. The same move sends
$x$ to $r-s+kP\in T$. These are the two parts of the three-move
recurrence, so Lemma~\ref{lem:unique} gives $\W_0(S)=T$.

For $P=c+a$, the complementary differences $P-a=c$ and $P-c=a$
are already move differences, leaving only $P-b=c-\delta$ to check.
For $P=c+b$, the complementary differences $P-b=c$ and $P-c=b$
are already excluded, leaving only $P-a=c+\delta$. In the latter case
we must first establish that the deletion pattern is the actual initial
block, as the first-block calculation above explains.

\subsection*{Admissible angles}

We now express these conditions in terms of the base pattern.
Besides the gaps $D$, the $c+b$ case uses the arc $\Lambda$ below.
Lemma~\ref{lem:T2} shows that $\Lambda$ consists of the residues $w$
for which both $w$ and $w+a$ lie outside $\mathcal Z$, with residues
read modulo $a+b$.

\begin{equation}\label{eq:I1Lambda}
  \Lambda=\begin{cases}
    \big[(\eta+1)a,\ (\eta+1)a+\varepsilon\big), & \eta\ \text{even},\\[2pt]
    \big[\eta a+\varepsilon,\ (\eta+1)a\big),    & \eta\ \text{odd}.
  \end{cases}
\end{equation}

\begin{defi}\label{def:theta}
Let $\rho\in\Z/(a+b)\Z$. We say that $\rho$ is \emph{admissible for the period $P$}, and write
$\rho\in\Th_P$, in the following three cases.
\begin{align}
  \rho\in\Th_{a+b}\ &:\iff\ \mathcal Z\cap(\mathcal Z+\rho)=\varnothing; \tag{T0}\label{eq:T0}\\
  \rho\in\Th_{c+a}\ &:\iff\ \mathcal Z\cap[0,\delta)\cap\big(\mathcal Z+(\delta-\rho)\big)=\varnothing;
    \tag{T1}\label{eq:T1}\\
  \rho\in\Th_{c+b}\ &:\iff\ (\rho+\Lambda)\cap\mathcal Z=\varnothing\ \text{ and }\
    (\rho+D)\cap\Lambda=\varnothing. \tag{T2}\label{eq:T2}
\end{align}
All three are conditions on the angle alone; none of them involves $c$ beyond its residue.
\end{defi}

The avoidance lemma converts these conditions into difference sets.

\begin{prop}\label{prop:instances}
Put $A:=\mathcal Z\cap[0,\delta)$. Then, for every $\rho\in\Z/(a+b)\Z$,
\[
  \rho\in\Th_{a+b}\iff\rho\notin\mathcal Z-\mathcal Z,\qquad
  \rho\in\Th_{c+a}\iff\rho\notin\delta-\big(A-\mathcal Z\big),
\]
\[
  \rho\in\Th_{c+b}\iff\rho\notin\big(\mathcal Z-\Lambda\big)\cup\big(\Lambda- D\big).
\]
\end{prop}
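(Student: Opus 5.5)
This is a direct translation of Definition~\ref{def:theta} through Lemma~\ref{lem:avoid}(i). We apply it once per condition, choosing $X$ and $Y$ so that each condition reads $(\rho+X)\cap Y=\varnothing$ or a condition with a sign-flipped angle. All translates and differences are taken in $\Z/(a+b)\Z$, as agreed after Lemma~\ref{lem:avoid}.

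For \eqref{eq:T0}, put $X=Y=\mathcal Z$. Then $\mathcal Z\cap(\mathcal Z+\rho)=\varnothing$ is exactly $(\rho+\mathcal Z)\cap\mathcal Z=\varnothing$, and Lemma~\ref{lem:avoid}(i) gives $\rho\notin\mathcal Z-\mathcal Z$.

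For \eqref{eq:T1}, set $\sigma=\delta-\rho$. The condition reads $(\sigma+\mathcal Z)\cap A=\varnothing$ with $A=\mathcal Z\cap[0,\delta)$, which Lemma~\ref{lem:avoid}(i) converts to $\sigma\notin A-\mathcal Z$. The map $\sigma\mapsto\delta-\sigma$ is a bijection of the circle, so this is equivalent to $\rho\notin\delta-(A-\mathcal Z)$. The only point needing care is that $A$ is read as a subset of residues modulo $a+b$. This is harmless because $\delta<b<a+b$, so $[0,\delta)$ injects into the circle. Moreover, $\mathcal Z\subseteq[0,a+b)$ is already a set of residues by Proposition~\ref{prop:base}.

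For \eqref{eq:T2}, treat the two conjuncts separately and take the union of the excluded sets. The first conjunct, $(\rho+\Lambda)\cap\mathcal Z=\varnothing$, is Lemma~\ref{lem:avoid}(i) with $X=\Lambda$ and $Y=\mathcal Z$, giving $\rho\notin\mathcal Z-\Lambda$. The second conjunct, $(\rho+D)\cap\Lambda=\varnothing$, has $X=D$ and $Y=\Lambda$, giving $\rho\notin\Lambda-D$. Here $D=[a,b)\setminus\mathcal Z$ and $\Lambda$ from \eqref{eq:I1Lambda} are subsets of $[0,a+b)$, hence residues, so no truncation issue arises. The conjunction of the two conditions is the complement of the union, which yields the stated form.

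No step is a real obstacle. The only thing to watch is keeping the circle subtraction $Y-X$ distinct from the truncated scalar subtraction in $\N$, together with the orientation of the difference in the $\Th_{c+a}$ case. That orientation is fixed by writing the condition as $(\sigma+\mathcal Z)\cap A$ rather than $(\sigma+A)\cap\mathcal Z$. Part (ii) of Lemma~\ref{lem:avoid} is not needed here; it enters only when these difference sets are later evaluated as explicit arcs.
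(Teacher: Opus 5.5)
Your proof is correct and matches the paper's argument: you apply Lemma~\ref{lem:avoid}(i) to the same four pairs, $(X,Y)=(\mathcal Z,\mathcal Z)$ with shift $\rho$, $(\mathcal Z,A)$ with shift $\delta-\rho$, $(\Lambda,\mathcal Z)$ and $(D,\Lambda)$ with shift $\rho$. For \eqref{eq:T2}, you then take the complement of the union to handle the two conjuncts.
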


\begin{proof}
Apply Lemma~\ref{lem:avoid} to the translated pairs
$(\mathcal Z,\mathcal Z,\rho)$, $(\mathcal Z,A,\delta-\rho)$,
$(\Lambda,\mathcal Z,\rho)$ and $(D,\Lambda,\rho)$, respectively.
\end{proof}

\begin{remark}\label{rem:programme}
The four difference sets are evaluated in Lemma~\ref{lem:idiff},
Proposition~\ref{prop:arcdiff} and Theorem~\ref{thm:arcs}, with computations
in Appendix~\ref{app:intervals}. In particular,
\begin{equation}\label{eq:ThI}
 \Th_{a+b}=\mathcal I.
\end{equation}
The even harmonic $c+b$ case is treated by Proposition~\ref{prop:evenharm}.
\end{remark}

For $c+b$, condition \textup{(K2)} below first identifies the initial
block with the sieve; \textup{(K1)} then excludes a collision at the join.

\begin{lemma}\label{lem:T2}
Let $c>b$ with $\rho=c\bmod(a+b)$. Then $\rho\in\Th_{c+b}$ if and only if both
\begin{align}
  &\textup{(K1)}\quad
  \forall r\in[0,a):\ (r+\rho+\delta)\bmod(a+b)\in\mathcal Z
  \ \Longrightarrow\ (r+\delta)\bmod(a+b)\in\mathcal Z,\label{eq:K1}\\
  &\textup{(K2)}\quad
  \forall x\in[c+a,c+b):\ \big(x\notin\W_0(B)\ \wedge\ x-c\notin\W_0(B)\big)
  \ \Longrightarrow\ x-b\in\W_0(B).\label{eq:K2}
\end{align}
\end{lemma}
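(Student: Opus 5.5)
The plan is to prove the two halves of \eqref{eq:T2} separately: the first clause $(\rho+\Lambda)\cap\mathcal Z=\varnothing$ is equivalent to \textup{(K1)}, and the second clause $(\rho+D)\cap\Lambda=\varnothing$ is equivalent to \textup{(K2)}. Throughout we use that $W=\W_0(B)$ is purely periodic with period $a+b$ (Proposition~\ref{prop:base}). So for every $y\ge0$ we have $y\in W$ iff $y\bmod(a+b)\in\mathcal Z$. We also use $\delta=b-a\equiv-2a$ and $b\equiv-a\pmod{a+b}$ (Remark~\ref{rem:Zfacts}(iii)).

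\emph{Step 1: the two descriptions of $\Lambda$.} First I verify directly from the blocks of Proposition~\ref{prop:base} that the arc $\Lambda$ of \eqref{eq:I1Lambda} equals
\[
\Lambda'=\{w\in\Z/(a+b)\Z: w\notin\mathcal Z,\ w+a\notin\mathcal Z\}
\]
(the description quoted before the lemma), and also equals
\[
\Lambda''=\{(r+\delta)\bmod(a+b): r\in[0,a),\ (r+\delta)\bmod(a+b)\notin\mathcal Z\}.
\]
For $\Lambda''$, the window $[\delta,\delta+a)=[\eta a+\varepsilon,(\eta+1)a+\varepsilon)$ sits inside $[0,a+b)$.
\begin{itemize}
\item For $\eta$ even, this window meets the block $[\eta a,(\eta+1)a)$ of $\mathcal Z$, leaving exactly $[(\eta+1)a,(\eta+1)a+\varepsilon)$.
\item For $\eta$ odd, it meets the short block $[(\eta+1)a,(\eta+1)a+\varepsilon)$, leaving $[\eta a+\varepsilon,(\eta+1)a)$.
\end{itemize}
In both cases this is $\Lambda$. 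For $\Lambda'$, the complement of $\mathcal Z$ is $(\mathcal Z+a)\sqcup U$ by Remark~\ref{rem:Zfacts}(ii). I then remove $\mathcal Z-a$, whose blocks are $[(2j-1)a,2ja)$ together with the wrapped block $[-a,0)=[b,a+b)$, plus the shifted short block when $\eta$ is odd. A short parity case check shows that what survives is exactly $\Lambda$. This bookkeeping is the only real computation and is where I expect errors to creep in, particularly the wrap-around block in the odd case.

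\emph{Step 2: \textup{(K1)} is the first clause.} Take the contrapositive of \textup{(K1)}: for $r\in[0,a)$, if $(r+\delta)\bmod(a+b)\notin\mathcal Z$, then $(r+\delta)+\rho\notin\mathcal Z$. By $\Lambda=\Lambda''$, this says precisely that $\rho+\Lambda$ misses $\mathcal Z$.

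\emph{Step 3: \textup{(K2)} is the second clause.} Write $x=c+u$ with $u\in[a,b)$. Since $c>b$, all of $x$, $x-c=u$ and $x-b$ are nonnegative. The hypothesis $u\notin W$ with $u\in[a,b)$ means $u\in D$. Periodicity of $W$ gives $x\in W\iff\rho+u\in\mathcal Z$ and $x-b\in W\iff\rho+u+a\in\mathcal Z$, using $-b\equiv a$. So \textup{(K2)} reads: for every $u\in D$, it is not the case that both $\rho+u\notin\mathcal Z$ and $\rho+u+a\notin\mathcal Z$. In other words, $\rho+D$ avoids $\Lambda'=\Lambda$.

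Combining Steps 2 and 3 gives the equivalence with \eqref{eq:T2}.
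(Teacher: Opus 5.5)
Your proposal is correct and follows the paper's proof essentially step for step. You identify $\Lambda$ both as $[\delta,b)\setminus\mathcal Z$ and as $\{w: w\notin\mathcal Z,\ w+a\notin\mathcal Z\}$ from the block description. You then read \textup{(K1)} as the first clause of \eqref{eq:T2}, and \textup{(K2)} as the second clause via $(a+b)$-periodicity of $\W_0(B)$ and $-b\equiv a\pmod{a+b}$; your removal of $\mathcal Z-a$, including the wrapped block $[b,a+b)$, is the same bookkeeping the paper does by adding $a$ to the complementary blocks.
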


\begin{proof}
We first record two descriptions of the same obstruction on the circle:
\[
 \Lambda=[\delta,b)\setminus\mathcal Z
 =\{w:\ w\notin\mathcal Z,\ w+a\notin\mathcal Z\}.
\]
For the first identity, compare $[\delta,b)$ with the last block of
$\mathcal Z$ in Proposition~\ref{prop:base}; the uncovered interval has
exactly the endpoints in \eqref{eq:I1Lambda}.
For the second, recall
$[0,a+b)=\mathcal Z\sqcup(\mathcal Z+a)\sqcup U$ and
$U+a\subseteq\mathcal Z$ (Remark~\ref{rem:Zfacts}). Adding $a$ sends
each full odd block except the last into the next even block of
$\mathcal Z$; for odd $\eta$, the terminal partial block wraps into
$[0,a)$. In the last full odd block, the portion that does not return
to $\mathcal Z$ is $[(\eta+1)a,(\eta+1)a+\varepsilon)$ for even $\eta$,
and $[\eta a+\varepsilon,(\eta+1)a)$ for odd $\eta$; these are again
the two cases of \eqref{eq:I1Lambda}.

As $r$ ranges over $[0,a)$, $r+\delta$ ranges over $[\delta,b)$.
Thus \textup{(K1)} says that none of the points in
$[\delta,b)\setminus\mathcal Z=\Lambda$ enters $\mathcal Z$ after
translation by $\rho$, or $(\rho+\Lambda)\cap\mathcal Z=\varnothing$.
For \textup{(K2)}, put $u=x-c\in[a,b)$ and $w=\rho+u$ on the circle.
Its failure means $u\in D$, $w\notin\mathcal Z$ and
$w+a\notin\mathcal Z$, since $-b\equiv a\pmod{a+b}$.
The second identity makes this equivalent to
$(\rho+D)\cap\Lambda\ne\varnothing$.
These are precisely the two conditions in \eqref{eq:T2}.
\end{proof}

\subsection*{The sufficient criterion}

For the remainder of this section, assume $c\ne a+b$ and
$\gcd(a,b,c)=1$ unless stated otherwise; we retain $a\ge2$.
The next theorem establishes the $\P$-set. Its proof does not use
primitivity or $a\ge2$; these hypotheses are retained for the later
least-period classification. The remaining nim-values and least periods
will follow in \S\ref{sec:lfunction}.

\needspace{10\baselineskip}
\begin{thm}[Sufficient criterion]\label{thm:verify}
Let $c>b$, $c\neq a+b$, $\gcd(a,b,c)=1$ and $a\ge2$, and let
$P\in\{a+b,\,c+a,\,c+b\}$ be a candidate period whose condition holds, that is,
$\rho\in\Th_P$. Then
\[
  \W_0(S)=\begin{cases}\W_0(B), & P=a+b,\\[2pt] T_P, & P\in\{c+a,\,c+b\},\end{cases}
\]
with $T_P$ the sieve of Definition~\ref{def:sieve}. In particular the set of
$\P$-positions is $P$-periodic from $0$: the outcome sequence is purely periodic
with $P$ as a period.
\end{thm}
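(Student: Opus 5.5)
The plan is to prove each of the three cases by exhibiting a candidate set and checking the three-move recurrence, then invoking Lemma~\ref{lem:unique}. For $P\in\{c+a,c+b\}$ we use the ``repeating a known initial block'' argument above. By that argument, once $H=\W_0(S)\cap[0,P)$ is known, it suffices to exclude the single complementary difference $P-b=c-\delta$ (for $P=c+a$) or $P-a=c+\delta$ (for $P=c+b$) between two elements of $H$. Proposition~\ref{prop:sieveform} then identifies $H+P\N$ with $T_P$.

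\emph{Case $P=a+b$.} We show that $W=\W_0(B)$ itself satisfies the three-move recurrence. If $x\notin W$, the base recurrence already supplies an $a$- or $b$-move into $W$, and that move is still available in $S$. If $x\in W$, its $a$- and $b$-options lie outside $W$ by the base recurrence, so it remains to rule out $x-c\in W$. Both $x$ and $x-c$ have residues in $\mathcal Z$ and differ by $\rho$ modulo $a+b$, so such a collision would contradict \eqref{eq:T0}, i.e.\ $\mathcal Z\cap(\mathcal Z+\rho)=\varnothing$. Lemma~\ref{lem:unique} then gives $\W_0(S)=W$, which is $(a+b)$-periodic by Proposition~\ref{prop:base}.

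\emph{Case $P=c+a$.} The first-interval analysis gives $H=W\cap[0,c)$, since $\W_0(S)\cap[c,c+a)=\varnothing$. Suppose $x<y$ in $H$ with $y-x=c-\delta$. Because $y<c$, we get $x<\delta\le a+b$, so $x$ equals its own residue and lies in $A=\mathcal Z\cap[0,\delta)$. Moreover $y\equiv x+\rho-\delta$ lies in $\mathcal Z$, so $x\in\mathcal Z+(\delta-\rho)$. This contradicts \eqref{eq:T1}, and the join argument finishes the case.

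\emph{Case $P=c+b$.} By Lemma~\ref{lem:T2}, \eqref{eq:T2} is equivalent to (K1) together with (K2). The first step, which I expect to be the main obstacle, is to show that the actual initial block equals the sieve block $W\cap J_{c+b}$ from Proposition~\ref{prop:sieveform}. We use the formula $Y(c+u)=(1-z(u))(1-z(c+u-b))$ for $0\le u<b$. Both sides vanish for $u<a$. For $u\in[a,b)$ with $z(u)=0$, we need $z(c+u-b)=0\iff z(c+u)=1$. The direction ``$\Leftarrow$'' is the base recurrence. The direction ``$\Rightarrow$'' is exactly (K2) applied to $x=c+u$. When $z(u)=1$, both sides are $0$. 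Here $c>b$ ensures that all options are nonnegative.

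Next we handle the join. Suppose $x<y$ in $H$ with $y-x=c+\delta$. Since $y<c+b$, we get $x=r\in[0,a)$. Then $y=c+r+\delta$, and membership $y\in H$ forces $y$ into $[c+a,c+b)$, because $[c,c+a)$ meets no element of $H$. So $z(y)=1$ and $y-c=r+\delta\notin W$. The residue of $y$, namely $r+\delta+\rho$, lies in $\mathcal Z$, so (K1) gives $(r+\delta)\bmod(a+b)\in\mathcal Z$. Since $r+\delta<b<a+b$, this contradicts $r+\delta\notin W$. The join criterion then gives $\W_0(S)=H+P\N=T_{c+b}$.

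In all three cases $P$-periodicity from $0$ is immediate from the form of the set. As the paper notes, the hypotheses of primitivity and $a\ge2$ are not used.
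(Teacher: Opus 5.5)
Your proposal is correct and follows essentially the same route as the paper's proof. You check the inert case directly against (T0), use the first-block calculation plus the join argument with the single residual difference $c-\delta$ for $P=c+a$, and for $P=c+b$ identify the block via the base recurrence and (K2), that is $Y(c+u)=z(c+u)(1-z(u))$, before excluding the join difference $c+\delta$ with (K1).
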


\begin{proof}
Write $W=\W_0(B)$. In the two cases $P>c$, we first identify the actual initial block.
The repetition argument then verifies the recurrence required by
Lemma~\ref{lem:sieve}.

\emph{The unchanged base pattern: $P=a+b$.}
Condition \eqref{eq:T0} excludes a difference $c$ between base
$\P$-positions, and the differences $a,b$ are already excluded.
Every position outside $W$ retains a legal $a$- or $b$-move into $W$.
Thus the three-move recurrence and Lemma~\ref{lem:unique} give
$\W_0(S)=W$.

\emph{Closing at $c+a$.}
The first-block calculation gives
\[
  H:=\W_0(S)\cap[0,c+a)=W\cap[0,c)
    =T_{c+a}\cap[0,c+a),
\]
where the last equality is Proposition~\ref{prop:sieveform}.
The complementary differences $P-a=c$ and $P-c=a$ are already
excluded inside this actual initial block. Only $P-b=c-\delta$ remains.
Suppose $y=x+c-\delta$ with $x,y\in H$. Since $y<c$, we have
$0\le x<\delta$, and reducing in the base period gives
\[
  x\in\mathcal Z\cap[0,\delta),\qquad
  x+\rho-\delta\in\mathcal Z\pmod{a+b}.
\]
Thus $x\in\mathcal Z\cap[0,\delta)\cap
(\mathcal Z+(\delta-\rho))$, contrary to \eqref{eq:T1}.
The repetition argument proves $\W_0(S)=H+(c+a)\N=T_{c+a}$.

\emph{Closing at $c+b$: first identify the block.}
By Lemma~\ref{lem:T2}, both \textup{(K1)} and \textup{(K2)} hold.
The first-block formula and \textup{(K2)} give
\[
  Y(c+u)=z(c+u)(1-z(u))\qquad(0\le u<b).
\]
Indeed, when $z(c+u)=1$, the base recurrence gives $z(c+u-b)=0$.
When $z(c+u)=0$ and $z(u)=0$, one has $u\ge a$, and
\textup{(K2)} gives $z(c+u-b)=1$. In the remaining case $z(u)=1$,
both sides of the display vanish. Consequently
\[
  \begin{aligned}
  H&:=\W_0(S)\cap[0,c+b)\\
   &=(W\cap[0,c))\sqcup\{c+u:\ u\in D,\ c+u\in W\}\\
   &=T_{c+b}\cap[0,c+b).
  \end{aligned}
\]
This establishes agreement with the actual initial block before any
periodicity is asserted.

\emph{Now check the join.}
For $P=c+b$, the complementary differences $P-b=c$ and $P-c=b$
are already excluded inside $H$. A pair at the remaining distance
$P-a=c+\delta$ has the form $r,r+c+\delta$, with $0\le r<a$.
The first point is always in $H$. By the block formula, the second
belongs to $H$ exactly when
\[
  (r+\rho+\delta)\bmod(a+b)\in\mathcal Z,\qquad
  r+\delta\notin\mathcal Z.
\]
These conditions contradict \textup{(K1)}. The repetition argument
therefore gives $\W_0(S)=H+(c+b)\N=T_{c+b}$.

In every case the identified set is $P$-periodic from $0$.
\end{proof}

\begin{lemma}\label{lem:ca-angles}
For every base $(a,b)$, the angles $(\delta-a)\bmod(a+b)$ and $b$ lie in $\Th_{c+a}$.
\end{lemma}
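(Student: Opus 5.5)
The plan is to check condition \eqref{eq:T1} directly for each of the two angles. In both cases the translate $\mathcal Z+(\delta-\rho)$ reduces, modulo $a+b$, to a translate of $\mathcal Z$ by $\pm a$. Such a translate is disjoint from $\mathcal Z$ by the collision property in Remark~\ref{rem:Zfacts}. Intersecting further with $[0,\delta)$ can only shrink the set, so the factor $\mathcal Z\cap[0,\delta)$ plays no role; it suffices to show $\mathcal Z\cap(\mathcal Z+(\delta-\rho))=\varnothing$ on the circle $\Z/(a+b)\Z$.

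First I note that \eqref{eq:T1} depends on $\rho$ only through its residue. The translate $\mathcal Z+(\delta-\rho)$ is read modulo $a+b$, as the paper's convention stipulates for translates of $\mathcal Z$. Hence I may take $\rho\equiv\delta-a$ even when $\delta<a$, with no need for a nonnegative representative.

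For $\rho\equiv\delta-a$ we have $\delta-\rho\equiv a$. The condition becomes $\mathcal Z\cap[0,\delta)\cap(\mathcal Z+a)=\varnothing$. This follows from Remark~\ref{rem:Zfacts}(i): $\mathcal Z\cap(\mathcal Z+a)=\varnothing$, and $\mathcal Z+a\subseteq[0,a+b)$, so no wraparound occurs.

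For $\rho=b$ we have $\delta-\rho=-a\equiv b\pmod{a+b}$. The condition becomes $\mathcal Z\cap[0,\delta)\cap(\mathcal Z-a)=\varnothing$ on the circle. Suppose $x\in\mathcal Z$ and $x\equiv z-a$ with $z\in\mathcal Z$. Then $z\equiv x+a$, which lies in $(\mathcal Z+a)\bmod(a+b)=\mathcal Z+a$, again by part (i). This contradicts $\mathcal Z\cap(\mathcal Z+a)=\varnothing$. Equivalently, Remark~\ref{rem:Zfacts}(iii) says that subtracting $b$ on the circle is adding $a$, so \eqref{eq:Wcoll} applies.

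There is no real obstacle. The only point needing care is to keep all translates on the circle, because the fact $\mathcal Z\cap(\mathcal Z+a)=\varnothing$ is used in its cyclic form. That cyclic form holds because $\mathcal Z+a$ stays inside $[0,a+b)$ by Remark~\ref{rem:Zfacts}(i).
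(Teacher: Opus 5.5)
Your proof is correct and follows the paper's argument: at both angles you reduce \eqref{eq:T1} to the shift $\delta-\rho\equiv\pm a\pmod{a+b}$ and apply the cyclic disjointness $\mathcal Z\cap(\mathcal Z\pm a)=\varnothing$ from Remark~\ref{rem:Zfacts}, with the factor $[0,\delta)$ only shrinking the set. You also check that $\mathcal Z+a$ does not wrap, so the disjointness holds on the circle; this makes explicit a step the paper leaves implicit.
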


\begin{proof}
For these two angles, $\delta-\rho\equiv a$ or $-a$ modulo $a+b$.
Either translate of $\mathcal Z$ is disjoint from $\mathcal Z$ by
Remark~\ref{rem:Zfacts}, so \eqref{eq:T1} holds.
\end{proof}

In the inert case the condition is also necessary, and the whole nim-sequence is
unchanged; this stronger statement is used in \S\ref{sec:necessity} and
\S\ref{sec:lfunction}.

\begin{prop}\label{prop:inert-iff}
$\W_0(S)=\W_0(B)$ if and only if $\rho\in\Th_{a+b}$; and in that case
$\G_S=\G_B$, so the whole nim-sequence is that of the base game: purely periodic,
with $a+b$ as a period. \textup{(}The standing assumptions $\gcd(a,b,c)=1$ and
$a\ge2$ are not used in this proposition or its proof.\textup{)}
\end{prop}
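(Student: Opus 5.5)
The proposition has two directions and a value statement, and I would take them in that order.

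\emph{Sufficiency.} If $\rho\in\Th_{a+b}$, then the argument in the first case of Theorem~\ref{thm:verify} already gives $\W_0(S)=\W_0(B)$. That argument used only \eqref{eq:T0}, the collision properties \eqref{eq:Wcoll}, and Lemma~\ref{lem:unique}; it used neither primitivity nor $a\ge2$. The one point to check is the reduction of \eqref{eq:T0} to the absence of difference $c$ in $W$. For $x,y\in W$, the equation $y-x=c$ reduces modulo $a+b$ to $y\equiv x+\rho$ with both residues in $\mathcal Z$, and this is exactly what \eqref{eq:T0} forbids.

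\emph{Necessity.} Suppose $\rho\notin\Th_{a+b}$. Then there are $z_1,z_2\in\mathcal Z$ with $z_2\equiv z_1+\rho\pmod{a+b}$. I would choose the lift $x=z_1\in[0,a+b)$ and $y=x+c$. Then $y\equiv z_2$, so $y\in W$, and $x\in W$. If $\W_0(S)=W$, then $y$ would be a $\P$-position of $S$ with a $c$-option $x$ that is also a $\P$-position. This contradicts the three-move recurrence, so $\W_0(S)\neq W$. This direction is a one-line collision argument and needs no hypothesis on $a$ or on the gcd. (The remark that a difference $c$ inside $W$ is forbidden is the same observation read backwards.)

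\emph{Nim-values.} The main work is showing $\G_S=\G_B$ once $\W_0(S)=W$. First, Ferguson's pairing (Lemma~\ref{lem:ferguson}) applied to both games gives $\W_1(S)=W+a=\W_1(B)$. It remains to show that a position $y$ with $\G_B(y)=2$ also has $\G_S(y)=2$. Such a $y$ has a $B$-option of value $0$ and a $B$-option of value $1$, and these options are unchanged in $S$. So we only need the extra option $y-c$ (when $y\ge c$) not to have value $2$ in $S$. I would argue by induction on $y$, which lets us replace $\G_S(y-c)$ by $\G_B(y-c)$, so the task is to rule out $\G_B(y-c)=2$ whenever $\G_B(y)=2$.

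Lemma~\ref{lem:descent} sends both positions down by $2a$: $y-2a\in W$, and if $\G_B(y-c)=2$ then $y-c-2a\in W$ as well. These two base $\P$-positions differ by $c$, contradicting the sufficiency step, which excludes differences $c$ within $W$. One edge case needs care: the descent requires $y-c\ge2a$. This holds automatically, because Lemma~\ref{lem:descent} states that any value-$2$ position is at least $2a$, so $\G_B(y-c)=2$ already forces $y-c\ge 2a$.

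With values $0$, $1$ and $2$ agreeing, and $B$ having no other values, induction gives $\G_S=\G_B$. Pure periodicity with period $a+b$ then follows from Proposition~\ref{prop:base}. I expect the nim-value step to be the main obstacle, specifically ruling out the new value $2$ (and therefore $3$) by means of the descent lemma; the rest follows directly from Lemma~\ref{lem:unique}.
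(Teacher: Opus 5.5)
Your proposal is correct and follows essentially the paper's route. Sufficiency comes from the $P=a+b$ case of Theorem~\ref{thm:verify}, necessity from a lifted pair of base $\P$-positions at distance $c$, and the values from an induction in which Lemma~\ref{lem:descent} turns value $2$ at both $y$ and $y-c$ into two base $\P$-positions at distance $c$. The only differences are organizational: you match the value-$0$ and value-$1$ classes of both games at once through Ferguson's pairing, whereas the paper adjoins the single extra value $\G_B(x-c)$ and rules out $\G_B(x-c)=\G_B(x)=i$ separately for $i=0,1,2$; and your lift $x=z_1$ in place of a large lift works equally well.
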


\begin{proof}
($\Leftarrow$) is proved by the argument in the case $P=a+b$ of
Theorem~\ref{thm:verify}; that argument uses neither $\gcd(a,b,c)=1$ nor $a\ge2$.
($\Rightarrow$) Suppose $\rho\notin\Th_{a+b}$, so that $z'\equiv z+\rho$ for some
$z,z'\in\mathcal Z$. These lift to $x\equiv z$ with $x$ large and $y=x+c\equiv z'$,
both in $\W_0(B)$ by $(a+b)$-periodicity; if $\W_0(S)=\W_0(B)$, the three-move recurrence
at $y$ \textup{(}Lemma~\ref{lem:unique}\textup{)} forbids $x\in\W_0(S)$, a
contradiction.

For the values, assume $\W_0(S)=\W_0(B)$ and argue by induction on $x$. If $x<c$ then
$x$ has the same options in $S$ as in $B$, and the induction hypothesis gives
$\G_S(x)=\G_B(x)$. If $x\ge c$ then the option values of $x$ in $S$ are those in $B$
together with $\G_B(x-c)$; adjoining one element to a set leaves its mex unchanged
if and only if that element differs from the mex, so it suffices to show
$\G_B(x-c)\neq\G_B(x)$. Suppose $\G_B(x-c)=\G_B(x)=i\in\{0,1,2\}$. For $i=0$ the
positions $x,x-c$ both lie in $\W_0(B)$, contradicting the first part. For $i=1$,
Lemma~\ref{lem:ferguson} gives $x-a,\,x-c-a\in\W_0(B)$, again two elements of
$\W_0(B)$ at distance $c$. For $i=2$, Lemma~\ref{lem:descent} gives
$x-2a,\,x-c-2a\in\W_0(B)$, the same contradiction. Finally $\G_B$ is purely periodic
with period $a+b$ by Proposition~\ref{prop:base}, so the same holds for $\G_S$.
\end{proof}

We now evaluate the angle tests as explicit interval conditions.
For $c+a$, this gives the three cases below. For $c+b$, we compute
the forbidden sets $\mathcal Z-\Lambda$ and $\Lambda-D$, then take
the complement of their union. These calculations depend only on the
base $(a,b)$ and require no primitivity assumption.

\begin{prop}\label{prop:theta-ca-small}
Let $b\le2a$. Then $\Th_{c+a}=\{0\}\cup[b,\,a+b)$ exactly.
\end{prop}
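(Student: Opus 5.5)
When $b\le2a$, Proposition~\ref{prop:base} gives $\mathcal Z=[0,a)$. Since $1\le\delta=b-a\le a$, we also get $A=\mathcal Z\cap[0,\delta)=[0,\delta)$, which is nonempty. The plan is to apply the $c+a$ part of Proposition~\ref{prop:instances}, which says that $\rho\in\Th_{c+a}$ exactly when $\rho\notin\delta-(A-\mathcal Z)$. Everything then reduces to computing one difference set of two arcs with Lemma~\ref{lem:avoid}(ii).

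\emph{Step 1: compute the difference set.} Take $Y=A=[0,\delta-1]$ and $X=\mathcal Z=[0,a-1]$ in Lemma~\ref{lem:avoid}(ii). This gives
\[
A-\mathcal Z=[-(a-1),\,\delta-1].
\]
This interval has $a+\delta-1=b-1$ points. Its two input lengths sum to $a+\delta=b<a+b$, so Lemma~\ref{lem:avoid} says its image modulo $a+b$ is a genuine arc rather than the whole circle.

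\emph{Step 2: translate to the forbidden set.} Replace each element $d$ by $\delta-d$. The forbidden set $\delta-(A-\mathcal Z)$ becomes the arc
\[
[\delta-(\delta-1),\,\delta+(a-1)]=[1,\,b-1]\pmod{a+b},
\]
which again has $b-1$ points. Because $b-1<a+b$, these points are distinct residues and the arc does not wrap around.

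\emph{Step 3: take the complement.} The complement of $[1,b-1]$ in $\Z/(a+b)\Z$ is $\{0\}\cup[b,a+b)$. By the equivalence above, this complement is exactly $\Th_{c+a}$, which is the claimed equality.

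The only delicate point is making sure Lemma~\ref{lem:avoid}(ii) yields an arc and not the whole circle, and that there is no wraparound. Both follow from the length count $b-1<a+b$, which uses $\delta\le a$, that is, $b\le2a$. As a sanity check, Lemma~\ref{lem:ca-angles} lists $b$ and $(\delta-a)\bmod(a+b)=b+\delta\in[b,a+b)$ as admissible angles, and both lie in the set just found.
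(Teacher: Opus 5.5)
Your proof is correct and is essentially the paper's argument. The paper reads condition \eqref{eq:T1} directly: it says that the length-$a$ arc starting at $\sigma=(\delta-\rho)\bmod(a+b)$ misses $[0,\delta)$, and since a wrapping arc always contains $0$, this happens exactly when $\delta\le\sigma\le b$. You reach the same forbidden arc $[1,b-1]$ for $\rho$ through Proposition~\ref{prop:instances} and Lemma~\ref{lem:avoid}(ii) instead.

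There is one small slip, only in your sanity check: for $b=2a$ the angle $(\delta-a)\bmod(a+b)$ is $0$, not $b+\delta=a+b$, though it still lies in the set you found.
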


\begin{proof}
Here $\mathcal Z=[0,a)$ and $1\le\delta\le a$, including $b=2a$.
With $\sigma=(\delta-\rho)\bmod(a+b)$, condition \eqref{eq:T1} says
that the length-$a$ arc starting at $\sigma$ misses $[0,\delta)$.
A wrapping arc meets this interval, and a nonwrapping arc misses it
exactly when $\delta\le\sigma\le b$.
Hence $\rho\in\{0\}\cup[b,a+b)$, as claimed.
\end{proof}

\begin{thm}[arc decomposition]\label{thm:arcs}
Let $a\ge2$. Then
\[
  \Th_{c+a}=\begin{cases}
    \{0\}\cup[b,\,a+b), & b\le2a,\\
    \mathcal I, & b=ka,\ k\ \text{odd},\\
    \{\delta-a,\ b\}, & \text{otherwise}.
  \end{cases}
\]
\end{thm}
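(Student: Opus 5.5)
The case $b\le2a$ is exactly Proposition~\ref{prop:theta-ca-small}, so assume $b>2a$, i.e.\ $\eta\ge1$. By Proposition~\ref{prop:instances}, with $\sigma=(\delta-\rho)\bmod(a+b)$, we have $\rho\in\Th_{c+a}$ iff $\sigma\notin A-\mathcal Z$, where $A=\mathcal Z\cap[0,\delta)$. So the whole task is to compute the circular difference set $A-\mathcal Z$ from the block description of Proposition~\ref{prop:base}, and then reflect via $\rho=\delta-\sigma$. The plan is to show that the complement of $A-\mathcal Z$ is $\{a,-a\}$ in the generic case. Since $-a\equiv b$, the values $\sigma=\pm a$ correspond to $\rho=\delta-a$ and $\rho=\delta+a=b$. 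In the odd-multiple case the complement should be the odd multiples of $a$, whose reflections are exactly the elements of $\mathcal I$ from Lemma~\ref{lem:idiff}.

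First determine $A$ explicitly. Since $\delta=\eta a+\varepsilon$, and the blocks of $\mathcal Z$ are $[2ja,(2j+1)a)$ plus a partial block $[(\eta+1)a,(\eta+1)a+\varepsilon)$ when $\eta$ is odd, $A$ splits by parity of $\eta$.
\begin{itemize}
\item For $\eta$ even, $A$ consists of all full blocks with $2j<\eta$, together with the truncated block $[\eta a,\eta a+\varepsilon)$.
\item For $\eta$ odd, $A$ consists of the full blocks with $2j+1\le\eta$.
\end{itemize}
In either case $A\supseteq[0,a)$, and $A$ omits only the terminal portion of $\mathcal Z$.

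Next apply Lemma~\ref{lem:avoid}(ii) pair by pair. Each pair of a block $[x,x+\ell)$ of $\mathcal Z$ and a block $[y,y+m)$ of $A$ contributes the arc $[y-x-\ell+1,\,y-x+m-1]$. Because both lists contain many length-$a$ blocks at spacing $2a$, the full-block pairs produce arcs of length $2a-1$ centred at even multiples of $a$. These cover everything except the points that are odd multiples of $a$ on the circle. Whether a given odd multiple $ma$ survives is then decided by the boundary pieces: the truncated block of length $\varepsilon$ and the wrap-around gap, which has length $a+b-2N$ by Proposition~\ref{prop:base}.

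\textbf{Case $b=ka$, $k$ odd.} Here $\varepsilon=0$ and $\eta=k-1$ is even. The circle is exactly $k+1$ blocks of length $a$, with $\mathcal Z$ occupying the even-numbered blocks. A shift by an odd multiple of $a$ sends $\mathcal Z$ into the odd blocks, so every odd multiple is missed, and everything else is hit. Reflecting through $\delta=(k-1)a$ gives $\rho\in\{ja: j\text{ odd}\}$, which is $\mathcal I$.

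\textbf{Generic case.} Here either $\varepsilon>0$ or $\eta$ is odd. In this situation I expect the shifted copy of $\mathcal Z$ to land partly on an even block of $A$ for every odd multiple $ma$ with $m\ne\pm1$. The point is that the wrap-around breaks the $2a$-spacing, so somewhere the odd shift aligns a block of $\mathcal Z$ against a block of $A$ that is offset by $\varepsilon$ or by the partial block. The two shifts $\sigma=\pm a$ are always missed, by Lemma~\ref{lem:ca-angles}; this gives the inclusion $\{\delta-a,b\}\subseteq\Th_{c+a}$.

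The main obstacle is the coverage step in the generic case: proving that every $\sigma\notin\{\pm a\}$ lies in $A-\mathcal Z$, uniformly in the parity of $\eta$ and in $\varepsilon$. I would first try an explicit witness for each odd multiple $\sigma=ma$ with $3\le m$. Take the block $[0,a)\subseteq A$ and show that $[0,a)-\sigma$ meets $\mathcal Z$ after wrapping, or else use the last block of $A$ against the first block of $\mathcal Z$. The edge cases $\eta=1$ and $\eta=2$ would be checked by hand, as in the interval calculations of Appendix~\ref{app:intervals}. Finally, for the non-multiples of $a$ I would verify that consecutive full-block arcs overlap, which holds because each has length $2a-1$ and centres are $2a$ apart, so that no further points escape.
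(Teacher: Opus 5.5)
Your reduction is correct and is the paper's. By Proposition~\ref{prop:instances} everything comes down to the circular difference set $A-\mathcal Z$. The values $\sigma=\pm a$ are excluded by Lemma~\ref{lem:ca-angles}. Your odd harmonic argument also matches the appendix: on $\Z/(k+1)a\Z$ the set $\mathcal Z$ is the union of the even $a$-blocks, so no difference is an odd multiple of $a$, and the block $[0,a)\subseteq A$ already hits every other residue.

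\textbf{The gap.} What is missing is the coverage half of the generic case: that every $\sigma\notin\{a,b\}$ lies in $A-\mathcal Z$. This is the whole content of the third line of the theorem, and your proposal only says what you ``expect'' and what you ``would try''. The heuristic offered in its support is also inaccurate as stated.
\begin{itemize}
\item Full-block arcs of length $2a-1$ with centres $2a$ apart do not overlap. They abut, leaving exactly one point between them.
\item Outside the harmonic shapes $a\nmid(a+b)$, so ``odd multiples of $a$ on the circle'' is not a well-defined set. What the full-block pairs cover is an integer span of length $2\eta a-1$, minus its interior odd multiples of $a$. A residue escapes only if it has no representative in that span, or if all its representatives there are such odd multiples.
\item For $\eta\le2$ the span is shorter than $a+b$, so runs of non-multiples escape. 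For $\eta=2$ these are the residues $a+1,\dots,a+\varepsilon$.
\item The residues that genuinely need a boundary witness are not the positive multiples $ma$ with $m\ge3$. Those are already hit through a second representative in the span.
\end{itemize}

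\textbf{How to close it.} You must identify exactly which residues escape and exhibit a witness for each.

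Along your route the bookkeeping works out:
\begin{itemize}
\item For $\eta\ge3$ the span covers the circle. The full-block arcs then miss exactly $\{a,b\}$ when $\eta$ is odd (including $b=ka$ with $k$ even).
\item When $\eta\ge4$ is even, they miss exactly $\{a,b,(\eta-1)a+\varepsilon\}$. The extra residue $(\eta-1)a+\varepsilon\equiv-3a$ is hit by $\eta a-(a-\varepsilon)$, using the truncated block of $A$ against the first block of $\mathcal Z$.
\item For $\eta=1$, the partial block of $\mathcal Z$ contributes $[0,a)-[2a,2a+\varepsilon)\equiv[a+1,2a+\varepsilon-1]$.
\item For $\eta=2$, the truncated block of $A$ contributes $[2a,2a+\varepsilon)-[0,a)=[a+1,2a+\varepsilon-1]$.
\end{itemize}

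The paper organizes the same check differently, without tracking wrap-around. Since $[0,a)\subseteq A$, any $\sigma$ not hit by that block has $(-\sigma)\bmod(a+b)\in\Omega$, the set of starts of $\mathcal Z$-free windows of length $a$. Lemma~\ref{lem:windows} reads $\Omega$ off the complementary runs of $\mathcal Z$. After discarding $\sigma=a,b$, a short explicit list remains, and each member gets a witness pair. For example, when $\eta$ is even, $\sigma=(2i+1)a+\varepsilon$ is hit by $\alpha=(2i+2)a$, $z=a-\varepsilon$.

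Either version gives a complete proof; your proposal as written contains neither.
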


\begin{proof}
The case $b\le2a$ is Proposition~\ref{prop:theta-ca-small}; the other two cases are
window computations, carried out in Appendix~\ref{app:intervals} after
Lemma~\ref{lem:windows}.
\end{proof}

\begin{remark}\label{rem:arcs}
Lemma~\ref{lem:ca-angles} gives the inclusion $\supseteq$ in the last
case; the odd harmonic inclusion follows from the $2a$-periodic base
pattern. Appendix~\ref{app:intervals} proves the reverse inclusions
using $\Omega$. Together with Lemma~\ref{lem:idiff} and
Theorem~\ref{thm:thetacb}, this determines all admissible sets outside
the even harmonic case, where Proposition~\ref{prop:evenharm} confines
$\Th_{c+b}$ to multiples of $a$ and hence excludes primitive angles.
All are arc calculations of Proposition~\ref{prop:instances}.
\end{remark}

\begin{prop}[The two difference sets]\label{prop:arcdiff}
Assume $a\nmid b$, so that $\varepsilon\ge1$, and put $t=\lfloor\eta/2\rfloor$, so that
$\eta=2t$ or $\eta=2t+1$ according to its parity. Then, in $\Z/(a+b)\Z$:
\textup{(i)}
\[
  \mathcal Z-\Lambda=\begin{cases}
    \displaystyle\bigcup_{i=0}^{t}\big[(2i+1)a+1,\ (2i+2)a+\varepsilon-1\big], & \eta\ \text{even},\\[10pt]
    \displaystyle[1,\,a-1]\ \cup\bigcup_{i=-t-1}^{-1}\big[2ia+1,\ (2i+2)a-\varepsilon-1\big], & \eta\ \text{odd},
  \end{cases}
\]
\textup{(ii)}
\[
  \Lambda-D=\begin{cases}
    \displaystyle[1-\varepsilon,\ \varepsilon-1]\ \cup\bigcup_{i=0}^{t-1}\big[(2i+1)a+1,\ (2i+2)a+\varepsilon-1\big],
      & \eta\ \text{even},\\[10pt]
    \displaystyle\bigcup_{i=-1}^{t-1}\big[(2i+1)a+\varepsilon+1,\ (2i+3)a-1\big], & \eta\ \text{odd}.
  \end{cases}
\]
\end{prop}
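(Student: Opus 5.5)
The plan is to compute both difference sets directly from Lemma~\ref{lem:avoid}(ii). First I will write $\mathcal Z$, $\Lambda$ and $D$ as explicit unions of closed integer intervals, using Proposition~\ref{prop:base} and \eqref{eq:I1Lambda}.

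\emph{The blocks.} By Proposition~\ref{prop:base}, $\mathcal Z=\bigsqcup_{j=0}^{t}[2ja,(2j+1)a-1]$, together with the partial block $[(\eta+1)a,(\eta+1)a+\varepsilon-1]$ when $\eta$ is odd. The gaps $D=[a,b)\setminus\mathcal Z$ are the complementary intervals inside $[a,b-1]$, where $b-1=(\eta+1)a+\varepsilon-1$. For even $\eta$ these are $[(2j+1)a,(2j+2)a-1]$ for $0\le j<t$ and the final piece $[(2t+1)a,(2t+1)a+\varepsilon-1]$. For odd $\eta$ they are $[(2j+1)a,(2j+2)a-1]$ for $0\le j\le t$. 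The set $\Lambda$ is a single arc, of length $\varepsilon$ for even $\eta$ and $a-\varepsilon$ for odd $\eta$; it is nonempty because $\varepsilon\ge1$ (using $a\nmid b$) or because $\varepsilon<a$.

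\emph{Part (i).} $\Lambda$ is one arc $[\ell,\ell']$, so $\mathcal Z-\Lambda=\bigcup_j[z_j-\ell',z_j'-\ell]$ over the blocks $[z_j,z_j']$ of $\mathcal Z$. Each pair has total length at most $a+\varepsilon\le a+b$, so every piece is a genuine arc.

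For even $\eta$, take $\ell=(\eta+1)a=(2t+1)a$ and $\ell'=(2t+1)a+\varepsilon-1$. The block $j$ then gives $[(2j-2t-1)a-\varepsilon+1,(2j-2t)a-1]$. Reduce these modulo $a+b=(2t+2)a+\varepsilon$, adding $a+b$ to each, and reindex with $i=j$. This yields $[(2i+1)a+1,(2i+2)a+\varepsilon-1]$, as claimed.

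For odd $\eta$, take $\ell=\eta a+\varepsilon$ and $\ell'=(\eta+1)a-1$. The full blocks give arcs $[2ja-(\eta+1)a+1,(2j+1)a-1-\eta a-\varepsilon]$. Substituting $\eta=2t+1$ and $i=j-t-1\in[-t-1,-1]$ turns these into $[2ia+1,(2i+2)a-\varepsilon-1]$. The partial block contributes $[(\eta+1)a-(\eta+1)a+1,(\eta+1)a+\varepsilon-1-\eta a-\varepsilon]=[1,a-1]$.

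\emph{Part (ii).} Here $\Lambda-D=\bigcup_k[\ell-d_k',\ell'-d_k]$ over the gap intervals $[d_k,d_k']$ of $D$. The lengths again sum to at most $2a\le a+b$, since $b>a$ and $a\nmid b$ give $b\ge a+1$; if a sum ever exceeded $a+b$, Lemma~\ref{lem:avoid}(ii) says that piece covers the circle, and I will check that this does not happen.

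For even $\eta$, the final gap $[(2t+1)a,(2t+1)a+\varepsilon-1]$ against $\Lambda$, which is the same interval, gives $[1-\varepsilon,\varepsilon-1]$. The full gap $j$ gives $[(2t-2j-1)a+1,(2t-2j)a+\varepsilon-1]$, and reindexing with $i=t-1-j\in[0,t-1]$ gives the stated arcs.

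For odd $\eta$, gap $j$ gives $[\eta a+\varepsilon-(2j+2)a+1,(\eta+1)a-1-(2j+1)a]=[(2t-2j-1)a+\varepsilon+1,(2t-2j+1)a-1]$, and $i=t-1-j\in[-1,t-1]$ matches the statement. Negative indices are simply read modulo $a+b$.

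\emph{Main obstacle.} The main difficulty is bookkeeping, not ideas. I need the exact block and gap endpoints, including the degenerate situations $t=0$ and $\varepsilon=a-1$. I also need to confirm that no piece has length sum exceeding $a+b$, so that each piece is a true arc rather than the whole circle. Finally, the modular reindexing has to match the stated index ranges exactly. I will double-check the arithmetic against the example $(a,b)=(3,8)$, where $\delta=5$, $\eta=1$, $\varepsilon=2$, $t=0$ and $\Lambda=\{5\}$. The resulting complement must reproduce $\Th_{c+b}=\{3,4,5,9,10\}$.
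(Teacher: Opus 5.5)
Your proposal is correct and follows the paper's proof: you apply the interval-difference formula of Lemma~\ref{lem:avoid}(ii) block by block, with the reindexings $i=j$, $i=j-t-1$ and $i=t-1-j$, the extra $\Lambda-\Lambda$ piece for even $\eta$, and the partial-block piece $[1,a-1]$ for odd $\eta$. One small slip: for odd $\eta$ the length sum in part~(i) is $a+(a-\varepsilon)=2a-\varepsilon$, not at most $a+\varepsilon$. This is still below $a+b$, so every piece remains a genuine arc and nothing changes.
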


\begin{proof}
The endpoints of the arcs are computed in Appendix~\ref{app:intervals}.
\end{proof}

\begin{thm}[The admissible angles for the period $c+b$]\label{thm:thetacb}
Assume $a\nmid b$ and let
\[
  C_{a,b}:=\begin{cases}
   \displaystyle[\varepsilon,\,a)\ \cup\bigcup_{i=1}^{t}\big[2ia+\varepsilon,\,(2i+1)a\big], & \eta=2t,\\[12pt]
   \displaystyle[a{+}1,\,a{+}\varepsilon]\cup\bigcup_{j=2}^{t+1}\big[(2j{-}1)a,\,(2j{-}1)a{+}\varepsilon\big]
     \cup\big[(\eta{+}2)a,\,(\eta{+}1)a{+}2\varepsilon\big], & \eta=2t+1,
  \end{cases}
\]
where empty intervals are omitted; the last interval in the odd case is nonempty precisely
when $a\le2\varepsilon$. Then
\[
  \Th_{c+b}=C_{a,b}\ \sqcup\ \{a\},
\]
and consequently
\[
  \Th_{c+b}\setminus\Th_{a+b}=C_{a,b}.
\]
In particular $C_{a,b}\subseteq\Th_{c+b}$; for $\eta=0$ the first display reads
$C_{a,b}=[\delta,a)$. If instead $a\mid b$, say $b=ka$ with $k$ odd \textup{(}we call the shapes with
$a\mid b$ \emph{harmonic}\textup{)}, then $\varepsilon=0$, the arc
$\Lambda$ of \eqref{eq:I1Lambda} is empty, both conditions of \eqref{eq:T2} are vacuous,
and $\Th_{c+b}=\Z/(a+b)\Z$.
\end{thm}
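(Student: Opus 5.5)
The plan is to compute the complement of the forbidden set given by Proposition~\ref{prop:instances}, using the explicit arcs of Proposition~\ref{prop:arcdiff}. By that proposition, $\rho\in\Th_{c+b}$ exactly when $\rho\notin F:=(\mathcal Z-\Lambda)\cup(\Lambda-D)$. So it suffices to show that the complement of $F$ in $\Z/(a+b)\Z$ equals $C_{a,b}\sqcup\{a\}$. The final statement $\Th_{c+b}\setminus\Th_{a+b}=C_{a,b}$ will then follow from three facts about the non-harmonic case:
\begin{itemize}
\item by Lemma~\ref{lem:idiff} and \eqref{eq:ThI}, $\Th_{a+b}=\mathcal I$ is $[a,b]$ when $b\le2a$ and $\{a,b\}$ otherwise;
\item $a\notin C_{a,b}$;
\item $C_{a,b}\cap\mathcal I=\varnothing$.
\end{itemize}
One also needs to know which points of $\mathcal I$ lie in $\Th_{c+b}$, namely only $a$. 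In particular $b\notin\Th_{c+b}$, and $[a+1,b]$ must lie in $F$ when $\eta=0$. All of this drops out of the complement computation.

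\emph{Even case, $\eta=2t$.} The arcs of (i) are $[(2i+1)a+1,(2i+2)a+\varepsilon-1]$ for $0\le i\le t$. The arcs of (ii) are the same ones for $i\le t-1$, together with $[1-\varepsilon,\varepsilon-1]$. Since the arcs of (ii) for $i\le t-1$ already appear in (i), $F$ is the union of $[-(\varepsilon-1),\varepsilon-1]$ and the arcs of (i).

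Next we list the gaps between consecutive arcs, working in $[0,a+b)$ with $a+b=(2t+2)a+\varepsilon$.
\begin{itemize}
\item The last arc of (i), for $i=t$, ends at $(2t+2)a+\varepsilon-1=a+b-1$, i.e.\ at $-1$. So together with $[1-\varepsilon,\varepsilon-1]$ it covers up through $\varepsilon-1$, and both $-1$ and $0$ are forbidden. The case $\varepsilon=0$ is excluded by $a\nmid b$.
\item Between $\varepsilon-1$ and the first arc, which starts at $a+1$, the uncovered set is $[\varepsilon,a]=[\varepsilon,a)\sqcup\{a\}$.
\item Between the arc for index $i-1$, ending at $2ia+\varepsilon-1$, and the arc for index $i$, starting at $(2i+1)a+1$, the uncovered set is $[2ia+\varepsilon,(2i+1)a]$ for $1\le i\le t$.
\end{itemize}
This reproduces $C_{a,b}\sqcup\{a\}$. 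One must check that the arcs do not wrap or overlap in a way that hides a gap; the length condition in Lemma~\ref{lem:avoid}(ii) ensures each is a genuine arc.

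\emph{Odd case, $\eta=2t+1$.} We do the same bookkeeping, placing the negative-index arcs of (i) near the top of the circle. Writing $a+b=(2t+3)a+\varepsilon$, the arc $[2ia+1,(2i+2)a-\varepsilon-1]$ for $i=-j$ becomes
\[
[(2t+3-2j)a+\varepsilon+1,\ (2t+5-2j)a-1].
\]
The arcs of (ii) are $[(2i+1)a+\varepsilon+1,(2i+3)a-1]$ for $-1\le i\le t-1$; the one with $i=-1$ is $[\varepsilon-a+1,a-1]$ modulo $a+b$. We merge these with $[1,a-1]$, sort all endpoints in $[0,a+b)$, and read off the gaps.
\begin{itemize}
\item The gap just above $a-1$ is $\{a\}\cup[a+1,a+\varepsilon]$.
\item The interior gaps have the form $[(2j-1)a,(2j-1)a+\varepsilon]$.
\item The final gap is $[(\eta+2)a,(\eta+1)a+2\varepsilon]$. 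It comes from the interplay between the top arc of (i) and the wrapped arc $[\varepsilon-a+1,a-1]$, reduced modulo $(\eta+2)a+\varepsilon$. It is nonempty iff $a\le2\varepsilon$, which matches the statement.
\end{itemize}

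The main obstacle is exactly this odd-case sorting. The two families of arcs interleave with an offset of $\varepsilon$, and the wrap-around arc contributes the boundary gap. Off-by-one errors are likely, as are interactions for small $t$: with $t=0$ some index ranges become empty, and one must check that the wrapped arc does not swallow the $a$ gap. I would verify the endpoints against Example~\ref{ex:38}: there $(a,b)=(3,8)$, $\eta=1$, $\varepsilon=2$, $t=0$, so $C=[4,5]\cup[9,10]$, and $\Th_{c+b}=\{3,4,5,9,10\}$ matches.

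\emph{Harmonic case, $b=ka$.} Here $\varepsilon=0$ makes $\Lambda$ empty by \eqref{eq:I1Lambda}. Both conditions of \eqref{eq:T2} are then vacuous, so $\Th_{c+b}$ is the whole circle. Note that $k$ is odd here, since the even harmonic case is handled in Proposition~\ref{prop:evenharm}.
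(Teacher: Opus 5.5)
Your plan is correct and is the paper's proof: you take the complement of $(\mathcal Z-\Lambda)\cup(\Lambda-D)$ from Proposition~\ref{prop:arcdiff} and read off the gaps, treating the even case exactly as you do; you then check $C_{a,b}\cap\mathcal I=\varnothing$ and note that $\Lambda=\varnothing$ in the odd harmonic case. The odd-case sorting you flag as the main obstacle becomes immediate once you notice that your translated arcs $[(2t+3-2j)a+\varepsilon+1,(2t+5-2j)a-1]$ are, with $i=t+1-j$, literally the arcs $[(2i+1)a+\varepsilon+1,(2i+3)a-1]$ of (ii) and not an offset family; since also $[1,a-1]\subseteq[\varepsilon-a+1,a-1]$, the forbidden set is a single family indexed by $-1\le i\le t$, and all the gaps, including the wrap-around gap $[(\eta+2)a,(\eta+1)a+2\varepsilon]$, can be read off directly, which is how the paper does it.
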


\begin{proof}
Read off the gaps of the union $(\mathcal Z-\Lambda)\cup(\Lambda-D)$ of
Proposition~\ref{prop:arcdiff}; the reading is carried out in Appendix~\ref{app:intervals}.
\end{proof}

\subsection*{Reading the completed block}

The preceding argument determines the initial $\P$-set and shows when it
repeats. We now read that set in increasing order. Each complete base block
contributes $N$ positions, and the final partial block contributes
$C_a(\min(\rho,b))$. Put
\[
  N_c:=K\,N+C_a(\min(\rho,b))
      =K\,N+\#\big(\mathcal Z\cap[0,\rho)\big),
\]
\[
  D_\rho:=\{u\in D:\ (\rho+u)\bmod(a+b)\in\mathcal Z\}=\{\tau_0<\tau_1<\cdots\};
\]
Thus $N_c$ counts the base $\P$-positions below $c$, and $D_\rho$
is the \emph{band} of offsets used in the $c+b$ construction. For a candidate
period $P\in\{c+a,\,c+b\}$ put $E:=\varnothing$ if $P=c+a$ and $E:=D_\rho$ if
$P=c+b$.

\begin{thm}\label{thm:cut}
Let $S=\{a,b,c\}$ be non-additive, let $P\in\{c+a,\,c+b\}$, and suppose
$\W_0(S)=T_P$, the $c$-sieve of Definition~\ref{def:sieve}. Then
\[
  \W_0(S)\cap[0,P)\ =\ \{w_B(n):0\le n<N_c\}\ \sqcup\ \big(c+E\big).
\]
Put $M:=N_c+|E|$. The increasing list of this block is
\[
  v(t)=\begin{cases}
    w_2(t), & 0\le t<N_c,\\[2pt]
    c+\tau_{t-N_c}, & N_c\le t<M,
  \end{cases}
\]
where $\tau_0<\cdots<\tau_{|E|-1}$ enumerate $E$; the second case is empty
when $E=\varnothing$. For every $n\in\N$, the $n$-th $\P$-position is
\[
  w_3(n):=w(n)=P\Big\lfloor\frac nM\Big\rfloor+v(n\bmod M).
\]
In the inert situation $\W_0(S)=\W_0(B)$ the corresponding statement is $w(n)=w_B(n)$, with
no cut.
\end{thm}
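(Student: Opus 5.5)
The plan is to read off the block $\W_0(S)\cap[0,P)$ from Proposition~\ref{prop:sieveform} and then enumerate it in increasing order. Since $\W_0(S)=T_P$ by hypothesis, Proposition~\ref{prop:sieveform} gives $\W_0(S)\cap[0,P)=\W_0(B)\cap J_P$.

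For $P=c+a$ we have $J_P=[0,c)$. For $P=c+b$ we have $J_P=[0,c)\cup(c+D)$, and $c+u\in\W_0(B)$ holds exactly when $(\rho+u)\bmod(a+b)\in\mathcal Z$, by the $(a+b)$-periodicity of $\W_0(B)$ in Proposition~\ref{prop:base}. Hence $\W_0(B)\cap(c+D)=c+D_\rho=c+E$.

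Next I count $\W_0(B)\cap[0,c)$. Write $c=K(a+b)+\rho$. Each of the $K$ complete base blocks contributes $N$ positions, and the partial block $[K(a+b),c)$ contributes $\#(\mathcal Z\cap[0,\rho))$. Because $\mathcal Z\subseteq[0,b)$ and $\mathcal Z$ is the first $N$ terms of $w_1$, this count equals $C_a(\min(\rho,b))$ (Proposition~\ref{prop:base}). The total is $N_c$. Since $w_B=w_2$ is increasing and enumerates $\W_0(B)$, the base positions below $c$ are exactly $w_B(n)$ for $0\le n<N_c$.

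The union is disjoint, and the ordering is as stated, because every element of $c+E$ is at least $c$, while every $w_B(n)$ with $n<N_c$ is less than $c$. Listing $c+\tau_0<c+\tau_1<\cdots$ after $w_2(0),\dots,w_2(N_c-1)$ therefore gives the increasing list $v$ of length $M=N_c+|E|$.

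For the global formula I use $\W_0(S)=T_P=(\W_0(S)\cap[0,P))+P\N$, which holds because $T_P$ is $P$-periodic by definition. The $\P$-positions in $[kP,(k+1)P)$ are then $kP+v(t)$ for $0\le t<M$, and these windows occur in increasing order of $k$. So the $n$-th $\P$-position is obtained by writing $n=kM+t$ with $0\le t<M$, which gives $w(n)=P\lfloor n/M\rfloor+v(n\bmod M)$.

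The inert statement is immediate: when $\W_0(S)=\W_0(B)$, the enumeration of $\W_0(S)$ is $w_B$ itself.

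The only step I expect to need care is the identity $\#(\mathcal Z\cap[0,\rho))=C_a(\min(\rho,b))$ when $\rho\ge b$. There one uses that $\max\mathcal Z\le b-1$, so both sides equal $N=C_a(b)$. For $\rho<b$, the identity holds because $\mathcal Z\cap[0,b)$ agrees with the $\P$-set of the one-move game $\{a\}$ on $[0,b)$.
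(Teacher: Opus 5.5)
Your proposal is correct and follows the paper's own proof step for step. You read the block off Proposition~\ref{prop:sieveform}, identify the band as $c+D_\rho$ by the $(a+b)$-periodicity of $\W_0(B)$, count $N_c$ as $K$ full base periods plus $\#(\mathcal Z\cap[0,\rho))$, order the two parts by the threshold $c$, and extend by the $P$-periodicity of $T_P$.

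Two minor differences. You check that $\#(\mathcal Z\cap[0,\rho))=C_a(\min(\rho,b))$, whereas the paper builds this identity into the definition of $N_c$. The paper also states $M\ge N_c>0$ explicitly, which you leave implicit; it is needed to write $n=kM+t$ and follows at once from $[0,a)\subseteq\W_0(B)\cap[0,c)$.
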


\begin{proof}
Proposition~\ref{prop:sieveform} gives the part below $c$ and, for
$P=c+b$, the additional band $W\cap(c+D)=c+D_\rho$.
The prefix below $c=K(a+b)+\rho$ consists of $K$ full base periods
and the part indexed by $\mathcal Z\cap[0,\rho)$, so it contains
$KN+\#(\mathcal Z\cap[0,\rho))=N_c$ points.
The band lies in $[c,P)$, above all these points, so its increasing
enumeration follows the first $N_c$ values of $w_2$, giving $v$.
Since $N_c>0$, we may write $n=kM+t$, $0\le t<M$; periodicity then
places the $n$-th point at $kP+v(t)$.
When $E=\varnothing$ only the prefix remains, and the inert case has no cut.
\end{proof}

\begin{remark}\label{rem:ergodic}
The band $D_\rho$ consists of base gaps that the rotation carries into
$\mathcal Z$. Figure~\ref{fig:mechanism} shows how it completes the cut
block for $P=c+b$, using the base of Example~\ref{ex:38}.

\end{remark}

\begin{figure}[htbp]
\centering
\begin{tikzpicture}[x=0.30cm,y=0.30cm]
\begin{scope}
\node[anchor=west,font=\small] at (-0.5,4.1) {$S=\{3,8,24\}$:\quad $\rho=2$,\quad $p=c+a=27$};
\fill[preC!35] (24,0) rectangle (27,1);
\fill[inertC!80] (0,0) rectangle (1,1);
\fill[inertC!80] (1,0) rectangle (2,1);
\fill[inertC!80] (2,0) rectangle (3,1);
\fill[inertC!80] (6,0) rectangle (7,1);
\fill[inertC!80] (7,0) rectangle (8,1);
\fill[inertC!80] (11,0) rectangle (12,1);
\fill[inertC!80] (12,0) rectangle (13,1);
\fill[inertC!80] (13,0) rectangle (14,1);
\fill[inertC!80] (17,0) rectangle (18,1);
\fill[inertC!80] (18,0) rectangle (19,1);
\fill[inertC!80] (22,0) rectangle (23,1);
\fill[inertC!80] (23,0) rectangle (24,1);
\foreach \i in {0,...,26} \draw[gray!55,line width=0.3pt] (\i,0) rectangle (\i+1,1);
\draw[cutC,thick] (24,-0.5) -- (24,1.5);
\draw[decorate,decoration={brace,amplitude=3pt}] (0,1.7) -- (24,1.7)
  node[midway,above=3pt,font=\scriptsize] {base word $\W_0(B)$ on $[0,c)$};
\draw[decorate,decoration={brace,amplitude=3pt}] (24,1.7) -- (27,1.7)
  node[midway,above=3pt,font=\scriptsize] {closing step $+a$};
\foreach \x in {0,11,22} \node[font=\scriptsize,anchor=north] at (\x+0.02,-0.15) {$\x$};
\node[font=\scriptsize,anchor=north] at (27,-0.15) {$27$};
\draw[->,gray] (27,-1.6) .. controls (20,-3.0) and (7,-3.0) .. (0,-1.6);
\node[font=\scriptsize,gray,anchor=north] at (13.5,-2.6) {repeat with period $p=27$};
\end{scope}
\begin{scope}[yshift=-3.4cm]
\node[anchor=west,font=\small] at (-0.5,4.1) {$S=\{3,8,26\}$:\quad $\rho=4$,\quad $p=c+b=34$,\quad $D_\rho=\{3\}$};
\fill[preC!35] (26,0) rectangle (34,1);
\fill[inertC!80] (0,0) rectangle (1,1);
\fill[inertC!80] (1,0) rectangle (2,1);
\fill[inertC!80] (2,0) rectangle (3,1);
\fill[inertC!80] (6,0) rectangle (7,1);
\fill[inertC!80] (7,0) rectangle (8,1);
\fill[inertC!80] (11,0) rectangle (12,1);
\fill[inertC!80] (12,0) rectangle (13,1);
\fill[inertC!80] (13,0) rectangle (14,1);
\fill[inertC!80] (17,0) rectangle (18,1);
\fill[inertC!80] (18,0) rectangle (19,1);
\fill[inertC!80] (22,0) rectangle (23,1);
\fill[inertC!80] (23,0) rectangle (24,1);
\fill[inertC!80] (24,0) rectangle (25,1);
\fill[cbC!80] (29,0) rectangle (30,1);
\fill[pattern=north east lines,pattern color=black] (29,0) rectangle (30,1);
\foreach \i in {0,...,33} \draw[gray!55,line width=0.3pt] (\i,0) rectangle (\i+1,1);
\draw[cutC,thick] (26,-0.5) -- (26,1.5);
\draw[decorate,decoration={brace,amplitude=3pt}] (0,1.7) -- (26,1.7)
  node[midway,above=3pt,font=\scriptsize] {base word $\W_0(B)$ on $[0,c)$};
\draw[decorate,decoration={brace,amplitude=3pt}] (26,1.7) -- (34,1.7)
  node[midway,above=3pt,font=\scriptsize] {closing step $+b$, band $c+D_\rho=\{29\}$};
\foreach \x in {0,11,22} \node[font=\scriptsize,anchor=north] at (\x+0.02,-0.15) {$\x$};
\node[font=\scriptsize,anchor=north] at (34,-0.15) {$34$};
\draw[->,gray] (34,-1.6) .. controls (25,-3.0) and (9,-3.0) .. (0,-1.6);
\node[font=\scriptsize,gray,anchor=north] at (17,-2.6) {repeat with period $p=34$};
\end{scope}
\end{tikzpicture}
\caption{The two phases for the base game $B=\{3,8\}$ of Example~\ref{ex:38}; dark
cells are $\P$-positions. One period consists of the base pattern below
$c$, followed by an interval of length $a$ or $b$. This interval
contains no $\P$-positions for $p=c+a$ \textup{(}top\textup{)}, and contains the
band $c+D_\rho$ \textup{(}hatched\textup{)} for $p=c+b$ \textup{(}bottom\textup{)}.}
\label{fig:mechanism}
\end{figure}

When $b\le2a$, the admissible $c+a$ angles have $a\mid N_c$,
so the generator simplifies as follows.

\begin{corollary}[Two-term form]\label{cor:twoterm}
Let $b\le2a$ and $\rho\in\{0\}\cup[b,a+b)$. Then $\mathcal Z=[0,a)$, $N_c=Ka$ for $\rho=0$ and
$N_c=(K+1)a$ for $\rho\ge b$, and with
\[
  e:=(c+a)-\frac{(a+b)N_c}{a}
   =\begin{cases}a,&\rho=0,\\ \rho-b,&\rho\ge b,\end{cases}
\]
the $n$-th $\P$-position of $S$ is
\[
  w(n)\;=\;n+b\Big\lfloor\frac{n}{a}\Big\rfloor+e\Big\lfloor\frac{n}{N_c}\Big\rfloor,
  \qquad n\ge0 .
\]
\end{corollary}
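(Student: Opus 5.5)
The plan is to specialise Theorem~\ref{thm:cut} with $P=c+a$ and then simplify the floor expressions, using that every relevant count is a multiple of $a$. The standing non-additive hypotheses of this section are kept throughout. For $\rho=0$ this means $K\ge2$: the case $K=0$ is impossible since $c>b$, and $K=1$ is the additive case $c=a+b$.

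\emph{Step 1: the base data.} Since $b\le2a$, Proposition~\ref{prop:base} gives $\mathcal Z=[0,a)$ and $N=a$. Hence $w_1(m)=m$ for $0\le m<a$, and
\[
  w_2(n)=(a+b)\lfloor n/a\rfloor+(n\bmod a)=n+b\lfloor n/a\rfloor .
\]

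\emph{Step 2: the period and the count $N_c$.} By Proposition~\ref{prop:theta-ca-small}, the hypothesis $\rho\in\{0\}\cup[b,a+b)$ is exactly $\rho\in\Th_{c+a}$. Theorem~\ref{thm:verify} then gives $\W_0(S)=T_{c+a}$, and Theorem~\ref{thm:cut} applies with $E=\varnothing$ and $M=N_c$. This yields
\[
  w(n)=(c+a)\lfloor n/N_c\rfloor+w_2(n\bmod N_c).
\]
Now $N_c=KN+\#(\mathcal Z\cap[0,\rho))$. For $\rho=0$ this count is $Ka$. For $\rho\ge b\ge a$ the arc $[0,a)$ lies inside $[0,\rho)$, so the count is $(K+1)a$.

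\emph{Step 3: merge the floors.} Write $n=kN_c+t$ with $0\le t<N_c$. Because $a\mid N_c$, we have $\lfloor n/a\rfloor=kN_c/a+\lfloor t/a\rfloor$. Therefore
\[
  w(n)=k(c+a)+t+b\lfloor t/a\rfloor
      =n+b\lfloor n/a\rfloor+k\Bigl((c+a)-N_c-\tfrac{bN_c}{a}\Bigr),
\]
and the bracket equals $(c+a)-(a+b)N_c/a=e$.

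\emph{Step 4: evaluate $e$.} Substitute $c=K(a+b)+\rho$. For $\rho=0$, $e=K(a+b)+a-K(a+b)=a$. For $\rho\ge b$, $e=K(a+b)+\rho+a-(K+1)(a+b)=\rho-b$.

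No step is a real obstacle. The only point needing care is the divisibility $a\mid N_c$ in Step 3, which is what lets the two floor functions combine into the two-term form.
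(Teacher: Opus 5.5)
Your proposal is correct and follows essentially the same route as the paper: you obtain $\W_0(S)=T_{c+a}$ from Proposition~\ref{prop:theta-ca-small} and Theorem~\ref{thm:verify}, specialize Theorem~\ref{thm:cut} with $E=\varnothing$ and $M=N_c$, and use $a\mid N_c$ to merge the floors. The paper writes $n=kN_c+aj+r$ where you write $n=kN_c+t$ and expand $w_2(t)=t+b\lfloor t/a\rfloor$, which is the same computation.
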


\begin{proof}
Here $N=a$, $w_B(n)=n+b\lfloor n/a\rfloor$ and
$N_c=Ka+\min(\rho,a)$, which gives the stated counts and $e$.
Write $N_c=ad$ and $n=kN_c+aj+r$ with $0\le j<d$, $0\le r<a$.
Proposition~\ref{prop:theta-ca-small} and Theorem~\ref{thm:verify}
give $\W_0(S)=T_{c+a}$, so Theorem~\ref{thm:cut} gives
$w(n)=k(c+a)+(a+b)j+r$.
Substitution of $\lfloor n/a\rfloor=kd+j$ and
$\lfloor n/N_c\rfloor=k$ proves the formula.
\end{proof}

\emph{The other short forms.}
For $\rho=b-2a$, under the standing primitive assumptions,
$b>2a$ and $a\nmid b$. Since $C_a(b)-C_a(b-2a)=a$, the cut count is
$M=KN+C_a(b-2a)=(K+1)N-a$. Writing $n=kM+t$, $0\le t<M$, gives
\[
 w_2(n+ak)-2ak
   =k\bigl((K+1)(a+b)-2a\bigr)+w_2(t)
   =k(c+a)+w_2(t).
\]
Thus the same cut construction becomes
$w_3(n)=w_2(n+a\lfloor n/M\rfloor)-2a\lfloor n/M\rfloor$.

In the short $c+b$ branch $b-a\le\rho<a$, one has $b<2a$,
$N=a$, $N_c=Ka+\rho$ and $D_\rho=\varnothing$.
Indeed, for $u\in[a,b)$ the sum $\rho+u$ is below $a+b$
and at least $a$, so it cannot belong to $\mathcal Z=[0,a)$.
Put $M=Ka+\rho$ and write $n=kM+t$, $0\le t<M$. Then
$n+(a-\rho)k=ka(K+1)+t$, and hence
\[
 n+b\left\lfloor\frac{n+(a-\rho)\lfloor n/M\rfloor}{a}\right\rfloor
   =k(c+b)+t+b\lfloor t/a\rfloor
   =k(c+b)+w_2(t).
\]
These are the short forms in Table~\ref{tab:closedforms}.

\subsection*{Evaluating the band}\label{sec:band-enumeration}

For the remaining $c+b$ rows, we enumerate $D_\rho$ in two steps:
split it according to whether $\rho+u$ wraps around $a+b$, then
count and list the points in each resulting window. The resulting
list supplies the additional positions in Theorem~\ref{thm:cut}.
Put $m=a+b$. A band point $u$ satisfies
\[
 a\le u<b,\qquad u\bmod2a\ge a,\qquad
 (\rho+u)\bmod m\in\mathcal Z.
\]
Since $0\le\rho<m$ and $u<b<m$, the sum $\rho+u$ crosses $m$
at most once. Requiring its representative to lie below $b$ leaves
the two ordered windows
\[
 [L_0,V_0)=[a,b-\rho),\qquad
 [L_1,V_1)=[\max(a,m-\rho),b).
\]
In window $j\in\{0,1\}$, set
\[
 \sigma_j=(\rho-jm)\bmod2a,\qquad d_j=a+\max(0,a-\sigma_j),
\]
\[
 \ell_j=\min(a,2a-\sigma_j)-\max(0,a-\sigma_j).
\]
For $r=u\bmod2a\in[a,2a)$, the second residue condition is
$(r+\sigma_j)\bmod2a<a$. Its intersection with $[a,2a)$ is
$[d_j,d_j+\ell_j)$, so the selected points of this window lie in
$d_j+[0,\ell_j)+2a\Z$.

To count and list points of $d+[0,\ell)+2a\Z$, where
$d,\ell\in\Z$ and $1\le\ell\le2a$, define
\[
 R_{d,\ell}(x)=
   \ell\left\lfloor\frac{x-d}{2a}\right\rfloor
   +\min\bigl(\ell,(x-d)\bmod2a\bigr),
\qquad
 E_{d,\ell}(t)=d+t+(2a-\ell)\left\lfloor\frac t\ell\right\rfloor.
\]
Here $x,t\in\Z$; floor division and nonnegative remainders are used,
also for negative arguments. The increment $R(x+1)-R(x)$ is $1$
exactly when $x$ is selected. Therefore the number of selected points
in $[L,V)$ is $R(V)-R(L)$. Writing $t=q\ell+r$, $0\le r<\ell$,
gives $E(t)=d+2aq+r$ and $R(E(t))=t$. The $s$-th selected point
in the window is consequently $E(R(L)+s)$.

Apply these formulas to the two windows:
\[
 T_j=R_{d_j,\ell_j}(V_j)-R_{d_j,\ell_j}(L_j),\qquad
 e_j(s)=E_{d_j,\ell_j}\bigl(R_{d_j,\ell_j}(L_j)+s\bigr)
 \quad(0\le s<T_j).
\]
If $V_j\le L_j$ or $\ell_j=0$, set $T_j=0$ and do not evaluate $e_j$.
Since $L_1>V_0$, the first window precedes the second. Thus
\[
 M=N_c+T_0+T_1,\qquad
 v(t)=
 \begin{cases}
 w_2(t),&0\le t<N_c,\\
 c+e_0(t-N_c),&N_c\le t<N_c+T_0,\\
 c+e_1(t-N_c-T_0),&N_c+T_0\le t<M.
 \end{cases}
\]
Together with $w_3(n)=(c+b)\lfloor n/M\rfloor+v(n\bmod M)$,
this evaluates every entry of the general $c+b$ row without a search
or a sorting step. The window formulas enumerate the band for any
angle; identifying the repeated block with the game's $\P$-positions
requires $\W_0(S)=T_{c+b}$, as assumed in Theorem~\ref{thm:cut}.

\needspace{10\baselineskip}
\subsection*{The Main Theorem}

\begin{thm}[Main Theorem]\label{thm:main}
Let $a\ge2$, $\gcd(a,b,c)=1$, $c>b$ and $c\neq a+b$, and let $\rho=c\bmod(a+b)$ be the angle.
Suppose $\rho$ is admissible for at least one candidate period
\textup{(}Definition~\ref{def:theta}\textup{)},
\[
  \rho\ \in\ \Th_{a+b}\cup\Th_{c+a}\cup\Th_{c+b},
\]
and let $P$ be the least admissible candidate. Then $\G_S$ is purely periodic, its least
period is $P$, and
\[
  \W_0(S)=\begin{cases}
    \W_0(B), & \rho\in\Th_{a+b},\\[2pt]
    T_P, & \text{otherwise}.
  \end{cases}
\]
\end{thm}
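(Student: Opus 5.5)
The proof has three parts: identify the $\P$-set, transfer periodicity from the $\P$-set to the full nim-sequence, and show that $P$ is the least period. The first part is Theorem~\ref{thm:verify} applied to the least admissible candidate $P$. If $\rho\in\Th_{a+b}$, then $P=a+b$, because $a+b<c+a<c+b$ (recall $c>b$). Proposition~\ref{prop:inert-iff} then gives both $\W_0(S)=\W_0(B)$ and $\G_S=\G_B$. Otherwise Theorem~\ref{thm:verify} gives $\W_0(S)=T_P$, and this set is $P$-periodic from $0$.

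For the full nim-sequence I would use the reconstruction of \S\ref{sec:lfunction}. By Ferguson's pairing (Lemma~\ref{lem:ferguson}), $\W_1=\W_0+a$, which is again $P$-periodic from $0$. Since $\G_S\le3$, it remains to separate values $2$ and $3$ on the complement of $\W_0\cup\W_1$. I would argue by strong induction on $x$ that $\G_S(x+P)=\G_S(x)$ for every $x\ge0$. For positions with value $0$ or $1$ this is already known. For the others, compare the options of $x+P$ with those of $x$; options landing below $P$ are handled by the base structure. The intended tool is the $L$-function of Theorem~\ref{thm:lfunction}, which expresses the value $2$ versus $3$ through a $P$-periodic local rule. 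Assuming that theorem as the paper does, pure periodicity of $\G_S$ with period $P$ follows.

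For minimality, any period $p'$ of $\G_S$ is also a period of the $\P$-set. I would prove that the least period of $\W_0(S)$ is already $P$, working case by case.
\begin{itemize}
\item[(a)] If $P=a+b$, the $\P$-set is that of $B$, whose least outcome period is $a+b$ except in the odd harmonic case, where it is $2a$ (Remark~\ref{rem:base-period}). In that case the values $1$ and $2$ of $\G_B$ break the $2a$-periodicity, and $\gcd(a,b,c)=1$ together with $c\ne a+b$ excludes degenerate coincidences.
\item[(b)] If $P\in\{c+a,c+b\}$, any smaller period $p'$ must divide $P$, since $P$ is itself a period. By Theorem~\ref{thm:cut} the block $\W_0(S)\cap[0,P)$ begins with the base word on $[0,c)$, which contains $[0,a)$. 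It then has the $\P$-free run $[c,c+a)$ and, for $c+b$, a sparse band. A period $p'\mid P$ with $p'<P$ forces $p'\le P/2$. I would match the unique long $\P$-free stretch after $c$ against the base word and conclude that either $p'$ is a base period, i.e.\ a multiple of $a+b$ or $2a$, or the structure yields $p'\in\{a+b,c+a\}$. Neither is possible: $p'$ a multiple of the base period together with $p'\mid P$ forces $(a+b)\mid P$, contradicting the non-additivity and primitivity assumptions. And $p'=c+a<P=c+b$ would require $\rho\in\Th_{c+a}$, contrary to $P$ being least; proving this needs the converse direction of the $c+a$ test, namely that $\W_0(S)$ being $(c+a)$-periodic implies (T1).
\end{itemize}

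I expect step (b) to be the main obstacle. The cleanest route I see is to show that the first $\P$-free run of length at least $a$ beyond the initial block starts exactly at $c$ modulo $P$. Any period must preserve this run, so the period is at least $c$ and lies in $\{c+a,c+b\}$. Then $c+a$ is excluded for the $c+b$ case by the necessity half of the join argument.
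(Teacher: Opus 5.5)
Your identification of $\W_0(S)$ (Theorem~\ref{thm:verify}, with Proposition~\ref{prop:inert-iff} in the inert case) matches the paper, and so does your transfer of the period to $\G_S$ via Theorem~\ref{thm:lfunction}. The minimality step (b), however, has a genuine gap.

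The dichotomy ``$p'$ is a multiple of a base period, or $p'\in\{a+b,c+a\}$'' is exactly what has to be proved, and your proposed route cannot prove it. The seam is not in general marked by an exceptional $\P$-free run. For $S=\{3,8,24\}$ (Figure~\ref{fig:mechanism}, $P=27$), the $\P$-free runs in one period are $[3,6)$, $[8,11)$, $[14,17)$, $[19,22)$ and $[24,27)$, all of length $a$. Matching the seam run against the base word only tells you that $[p'-a,p')$ is a $\P$-free window of the base word. Such windows exist; they form the set $\Omega$ of Lemma~\ref{lem:windows}. This run-matching is only Claim~1 in the proof of Lemma~\ref{lem:prefix}, and it does not force $p'\ge c$.

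The missing idea is the quotient reduction:
\begin{itemize}
\item Since $p'\mid P$ and $p'<P$, you have $p'\le P/2<c$. So the $p'$-periodic word is the base prefix on $[0,p')$.
\item The recurrence holds at representatives in $[c,c+P)$, where all three moves are legal. This forces the prefix $Y$ to satisfy $(Y+a)\cap Y=(Y+b)\cap Y=\varnothing$ in $\Z/p'\Z$ (Lemma~\ref{lem:quotient}).
\item Prefix rigidity (Lemma~\ref{lem:prefix}) then gives $\mu\mid p'$, hence $\mu\mid P$. Its proof combines the window condition with this $b$-condition and a case analysis over the shapes of $\mathcal Z$.
\end{itemize}

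Two smaller corrections to (b):
\begin{itemize}
\item $(a+b)\mid P$ does not contradict primitivity or non-additivity. For instance, $\{3,8,30\}$ has $(a+b)\mid c+a$. What it contradicts is the phase-case hypothesis, since it forces $\rho\in\{a,b\}\subseteq\Th_{a+b}$.
\item You never need the unproved converse of \eqref{eq:T1}. A proper divisor of $c+b$ is at most $(c+b)/2<c+a$, so $p'=c+a$ cannot occur.
\end{itemize}

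Step (a) rests on a false claim. In the odd harmonic case $B=a\{1,k\}$ with $k$ odd, so Lemma~\ref{lem:gcd} gives $\G_B(x)=\lfloor x/a\rfloor\bmod2$. There is no value $2$ at all, and the least period is $2a<a+b$. If that case were compatible with an inert angle, the theorem would fail.

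The correct argument is that it is not compatible. In that case $\Th_{a+b}=\mathcal I$ consists of odd multiples of $a$ (Lemma~\ref{lem:idiff}). So $\rho\in\Th_{a+b}$, together with $a\mid(a+b)$, gives $a\mid c$. This contradicts $\gcd(a,b,c)=1$ with $a\ge2$.
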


The explicit formulas are collected in Table~\ref{tab:closedforms}.
The angle sets are evaluated by Lemma~\ref{lem:idiff} and
Theorems~\ref{thm:thetacb}, \ref{thm:arcs}; Proposition~\ref{prop:evenharm}
excludes primitive $c+b$ angles in the even harmonic case.

\begin{proof}
\emph{Inert case $\rho\in\Th_{a+b}$.} Proposition~\ref{prop:inert-iff} gives
$\G_S=\G_B$, so $q=0$ and the least period is that of $\W_0(B)$. By
\cite[Theorem 3.4]{MiklosPost} the latter is $a+b$ unless $b$ is an odd multiple of $a$;
in that harmonic case $\mathcal I$ consists of multiples of $a$
\textup{(}Lemma~\ref{lem:idiff}\textup{)}, so $a\mid\rho$ would force $a\mid c$,
contradicting $\gcd(a,b,c)=1$ with $a\ge2$. Hence the least period is $a+b=P$.

\emph{Phase case $\rho\notin\Th_{a+b}$.}
Theorem~\ref{thm:verify}, applied to the least admissible candidate
$P\in\{c+a,c+b\}$, gives $\W_0(S)=T_P$ and hence
$P$-periodicity of $\W_0(S)$. The $\P$-set description and the
admissibility of $P$ give hypothesis \textup{(H)} of
\S\ref{sec:lfunction}.
Corollary~\ref{cor:grundy-period} transfers this period to $\G_S$.
Lemma~\ref{lem:sameperiod} identifies the two least periods, and
Proposition~\ref{prop:minimality} shows that they equal $P$.
These results use the $\P$-set description and admissibility
conditions established here (Remark~\ref{rem:deferred}).
\end{proof}

Table~\ref{tab:closedforms} lists the conditions, the $n$-th
$\P$-position and the least period in each covered branch.
The uncovered angles and the known cases of necessity are discussed in
\S\ref{sec:necessity}.

\begin{remark}\label{rem:a1-fails}
The least-period assertion of the Main Theorem does not extend
unchanged to $a=1$. For $S=\{1,3,5\}$ one has $a+b=4$,
$\mathcal Z=\{0,2\}$ and $\rho=1\in\mathcal I=\{1,3\}$, so the Main Theorem would return
the least period $a+b=4$; but every move of $S$ is odd, so $\G_S(x)=x\bmod2$ by induction
on $x$, and the least period is $2$. The harmonic exclusion fails because $a=1$ makes $a\mid c$ vacuous.
The known purely periodic formulas for $a=1$ are recorded in
Table~\ref{tab:a1-known}; see Remark~\ref{rem:a1} for their attribution.

\end{remark}

\needspace{15\baselineskip}
\begingroup
\small
\setlength{\LTleft}{0pt}
\setlength{\LTright}{\fill}
\setlength{\LTcapwidth}{\textwidth}
\setlength{\tabcolsep}{4pt}
\renewcommand{\arraystretch}{1.35}
\begin{longtable}{@{}p{.24\textwidth}p{.57\textwidth}p{.14\textwidth}@{}}
\caption{Conditions, increasing $\P$-position formulas, and least periods
for the covered three-move games.}\label{tab:closedforms}\\
\multicolumn{3}{@{}p{\textwidth}@{}}{%
Every non-additive row assumes $2\le a<b<c$, $\gcd(a,b,c)=1$, and
$c\ne a+b$. The additive row has its own stated hypotheses.
The index is $n=0,1,\ldots$, with $w_3(0)=0$; $p$ is the least period
of both the $\P$-set and the full nim-sequence.}\\[5pt]
\multicolumn{3}{@{}p{\textwidth}@{}}{%
We use $N=C_a(b)$, $K=\lfloor c/(a+b)\rfloor$,
$\rho=c\bmod(a+b)$, $\sigma=\rho\bmod2a$, and
$\varepsilon=b\bmod a$. This is the same $\varepsilon$ as in
$b-a=\eta a+\varepsilon$, and $\lfloor b/a\rfloor=\eta+1$.}\\[6pt]
\toprule
Conditions & $n$-th $\P$-position & \shortstack[l]{Least period\\$p$}\\
\midrule
\endfirsthead
\caption[]{Conditions, $\P$-position formulas, and least periods
\textup{(}continued\textup{)}.}\\
\multicolumn{3}{@{}p{\textwidth}@{}}{%
The non-additive rows assume $2\le a<b<c$, $\gcd(a,b,c)=1$,
and $c\ne a+b$. The notation and index convention are as above.}\\[5pt]
\toprule
Conditions & $n$-th $\P$-position & \shortstack[l]{Least period\\$p$}\\
\midrule
\endhead
\midrule
\multicolumn{3}{r@{}}{Continued on the next page}\\
\endfoot
\bottomrule
\endlastfoot

\textit{Inert}\newline
$a<b\le2a$,\newline $a\le\rho\le b$,\newline
or\newline
$b>2a$, $a\nmid b$,\newline $\rho\in\{a,b\}$
& $w_3(n)=w_2(n)$
& $a+b$\\[9pt]
\midrule

$b\le2a$,\newline $\rho=0$ or $\rho>b$
& $\begin{aligned}[t]
   w_3(n)&=n+b\Big\lfloor\frac na\Big\rfloor\\
   &\quad+\Big(c+a-\frac{(a+b)M}{a}\Big)
                  \Big\lfloor\frac nM\Big\rfloor,\\[2pt]
   M&=a\Big\lceil\frac{c}{a+b}\Big\rceil
   \end{aligned}$
& $c+a$\\[10pt]

$b>2a$, $a\nmid b$,\newline $\rho=b-2a$
& $\begin{aligned}[t]
   w_3(n)&=w_2\!\Big(n+a\Big\lfloor\frac nM\Big\rfloor\Big)
                  -2a\Big\lfloor\frac nM\Big\rfloor,\\[3pt]
   M&=(K+1)N-a
   \end{aligned}$
& $c+a$\\[10pt]
\midrule

$a<b<2a$,\newline $b-a\le\rho<a$
& $\begin{aligned}[t]
   w_3(n)&=n+b\Big\lfloor
                  \frac{n+(a-\rho)\lfloor n/M\rfloor}{a}
                  \Big\rfloor,\\[3pt]
   M&=Ka+\rho
   \end{aligned}$
& $c+b$\\[10pt]

$b>2a$, $\varepsilon>0$,\newline
$\lfloor b/a\rfloor$ odd,\newline
$\varepsilon\le\sigma\le a$,\newline $\rho\ne a$
& $\displaystyle
   w_3(n)=(c+b)\Big\lfloor\frac nM\Big\rfloor+v(n\bmod M)$
& $c+b$\\[9pt]

$b>2a$, $\varepsilon>0$,\newline
$\lfloor b/a\rfloor$ even,\newline
$a\le\sigma\le a+\varepsilon$,\newline
$\rho\le b+\varepsilon$, $\rho\ne a$
& $\displaystyle
   w_3(n)=(c+b)\Big\lfloor\frac nM\Big\rfloor+v(n\bmod M)$
& $c+b$\\[9pt]

$b/a\in\{3,5,7,\ldots\}$,\newline
no additional condition\newline on $\rho$
& $\displaystyle
   w_3(n)=(c+b)\Big\lfloor\frac nM\Big\rfloor+v(n\bmod M)$
& $c+b$\\[9pt]
\midrule

\textit{Additive}\newline
$c=a+b$,\newline
$2\le a<b$,\newline $\gcd(a,b)=1$
& $\displaystyle
   w_3(n)=n+a\Big\lfloor\frac na\Big\rfloor
             +b\Big\lfloor\frac nN\Big\rfloor$
& $\displaystyle\frac{a(2N+b)}{\gcd(a,N)}$\\[8pt]
\midrule
\multicolumn{3}{@{}p{\textwidth}@{}}{%
The two-move generator $w_2$ uses the same $a,b$ as its row.
Where $M$ occurs, it is the number of $\P$-positions in one least period.
In the three rows using $v$, the increasing list
$v(0)<\cdots<v(M-1)$ and its length $M$ are given explicitly by the
two-window construction in \S\ref{sec:band-enumeration}.}\\[5pt]
\multicolumn{3}{@{}p{\textwidth}@{}}{%
Under the stated hypotheses the rows are disjoint. Their conditions
select among the proved covered constructions; they do not decide
pure periodicity for non-additive rulesets outside the table.
The necessity assertion remains Conjecture~\ref{conj:necessity}.}\\
\end{longtable}
\endgroup

\newpage
\subsection*{The known case with smallest move 1}

For comparison with Table~\ref{tab:closedforms}, we collect the known
purely periodic games $S=\{1,b,c\}$ in Table~\ref{tab:a1-known}.
The classification and the full nim-sequences are given in
\cite[Example~2.4 and Proposition~2.5]{Zhang}; the odd-$b$, even-$c$
formula is due to Ho \cite[Theorem~4]{Ho}.
Only the purely periodic cases are listed. Every omitted case has
positive pre-period by the same classification.

\begingroup
\small
\setlength{\LTleft}{0pt}
\setlength{\LTright}{\fill}
\setlength{\LTcapwidth}{\textwidth}
\setlength{\tabcolsep}{4pt}
\renewcommand{\arraystretch}{1.45}
\begin{longtable}{@{}p{.18\textwidth}p{.335\textwidth}p{.345\textwidth}p{.08\textwidth}@{}}
\caption{Known purely periodic cases with $a=1$. The displayed finite
word repeats from the first term; $p$ is its least period.}
\label{tab:a1-known}\\
\multicolumn{4}{@{}p{\textwidth}@{}}{%
Assume $1<b<c$ and write $c=K(b+1)+\rho$, $0\le\rho\le b$.
For even $b$, put $k=b/2$ and $H_b=(01)^k2$.
Word powers denote concatenation. The index is $n=0,1,\ldots$.}\\[5pt]
\toprule
Conditions & $n$-th $\P$-position $w_3(n)$ & Grundy word & Least $p$\\
\midrule
\endfirsthead
\caption[]{Known purely periodic cases with $a=1$ \textup{(}continued\textup{)}.}\\
\toprule
Conditions & $n$-th $\P$-position $w_3(n)$ & Grundy word & Least $p$\\
\midrule
\endhead
\bottomrule
\endfoot
\raggedright $b,c$ odd
& $2n$
& $01$
& $2$\\[5pt]
\raggedright $b$ odd, $c$ even
& $2n+b\lfloor 2n/c\rfloor$
& $(01)^{c/2}(23)^{(b-1)/2}2$
& $b+c$\\[5pt]
\raggedright $b$ even,\newline $\rho\in\{1,b\}$
& $2n+\lfloor n/k\rfloor$
& $H_b$
& $b+1$\\[5pt]
\raggedright $b\ge4$ even,\newline $3\le\rho\le b-1$,\newline $\rho$ odd
& $\begin{gathered}
2n+\lfloor n/k\rfloor\\[-1pt]
{}+(\rho-1)\lfloor n/M\rfloor,\\[-1pt]
M=(K+1)k
\end{gathered}$
& $H_b^{K+1}(32)^{(\rho-1)/2}$
& $b+c$\\[5pt]
\raggedright $b\ge4$ even,\newline $\rho=b-2$
& $\begin{gathered}
2n+\left\lfloor\dfrac{n+\lfloor n/M\rfloor}{k}\right\rfloor,\\[2pt]
M=(K+1)k-1
\end{gathered}$
& $H_b^K(01)^{k-1}2$
& $c+1$\\[5pt]
\raggedright $b$ even,\newline $c=b+1$
& $2n+b\lfloor n/k\rfloor$
& $(01)^k(23)^k$
& $2b$\\[5pt]
\raggedright $b=2$, $\rho=0$,\newline $c>3$
& $3n+\lfloor n/K\rfloor$
& $(012)^K3$
& $c+1$\\
\end{longtable}
\endgroup

The $\P$-position formulas follow by reading the zeros of these known
words. For even $b$, the base enumeration is
$w_B(n)=2n+\lfloor n/k\rfloor$.
The even-$b$ rows with $b\ge4$ and periods $b+c$ and $c+1$ repeat its first
$(K+1)k$ and $(K+1)k-1$ terms, respectively, with the indicated
periods, by the same block enumeration used in
Theorem~\ref{thm:cut}.
For $c=b+1$, both parities give
\[
 N=\lceil b/2\rceil,\qquad
 w_3(n)=2n+b\lfloor n/N\rfloor,\qquad p=b+2N.
\]
These are also the $a=1$ specializations of the additive formulas in
Theorem~\ref{thm:additive}; the proof there retains $a\ge2$.
The least-period rule of the Main Theorem cannot be specialized in
the same way: if $b,c$ are odd, the least period is $2$, smaller than
every candidate sum (Remark~\ref{rem:a1-fails}).

\needspace{22\baselineskip}

\section{The additive case}\label{sec:additive}

On the additive line $c=a+b$, the primitive hypothesis is $\gcd(a,b)=1$;
we retain $a\ge2$. The first $N$ $\P$-positions agree with the base game.
At $c$, the base pattern would return to a $\P$-position, but the new move
reaches $0$. The continuation is obtained from the one-move $\P$-sequence
by adding $jb$ to each term whose index lies in $[jN,(j+1)N)$,
for $j=0,1,\ldots$. The following theorem verifies this construction
and gives a unified form of the classical additive parametrizations
(Remark~\ref{rem:pq}).

\needspace{14\baselineskip}
\begin{thm}\label{thm:additive}
Let $c=a+b$ with $a\ge2$ and $\gcd(a,b)=1$. Put
\[
  N=a\Big\lfloor\frac{b}{2a}\Big\rfloor+\min(a,b\bmod2a),\qquad
  Q=\operatorname{lcm}(a,N).
\]
The $n$-th $\P$-position of $S$, for $n\ge0$, is
\[
  w_3(n):=w(n)=n+a\Big\lfloor\frac{n}{a}\Big\rfloor
          +b\Big\lfloor\frac{n}{N}\Big\rfloor.
\]
Both $\W_0(S)$ and $\G_S$ are purely periodic, with least period
\[
  p_{\mathrm{add}}=2Q+b\frac{Q}{N}
                 =\frac{a(2N+b)}{\gcd(a,N)}.
\]
Write $\alpha_n=w(n)$ for $0\le n<Q$, so that
$\W_0(S)\cap[0,p_{\mathrm{add}})=\{\alpha_n:0\le n<Q\}$.
In terms of $\delta=b-a=\eta a+\varepsilon$, the parameters specialize to
\[
\begin{array}{c|c|c|c}
 &N&Q&p_{\mathrm{add}}\\ \hline
\eta\text{ even}&(\tfrac\eta2+1)a&N&(\eta+2)a+b\\[2pt]
\eta\text{ odd}&m_\eta=\delta-\tfrac{\eta-1}{2}a&a\,m_\eta&a(2m_\eta+b).
\end{array}
\]
For even $\eta$, $\alpha_n=n+a\lfloor n/a\rfloor$ on $0\le n<Q=N$;
for odd $\eta$, the formula for $w(n)$ uses $N=m_\eta$.
\end{thm}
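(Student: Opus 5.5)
The plan is to verify directly that the set $T=\{w(n):n\ge0\}$ satisfies the three-move recurrence, and then to invoke Lemma~\ref{lem:unique}. Write $w(n)=w_1(n)+b\lfloor n/N\rfloor$. For indices $n\in[jN,(j+1)N)$, the $j$-th block of $T$ is the segment of the one-move list $w_1$ with those indices, translated by $jb$. Consecutive differences of $w$ take only the values $1$, $a+1$, $b+1$ and $a+b+1$: the jump $a+1$ comes from $\lfloor n/a\rfloor$, the jump $b+1$ from $\lfloor n/N\rfloor$, and $a+b+1$ from both. We will check at the outset that $N\ge a$, so that the first $a$ terms $[0,a)$ form a run.

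\emph{No move joins two points of $T$.} Apply Lemma~\ref{lem:diffspec} with $d_1=a$, $k_1=a$ and $d_2=N$, $k_2=b$. This gives
\[
w(n)-w(n-d)=d+a\lfloor d/a\rfloor+b\lfloor d/N\rfloor+\chi_1a+\chi_2b,\qquad \chi_i\in\{0,1\}.
\]
We must rule out the values $a$, $b$ and $a+b$ for $d\ge1$.

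\begin{enumerate}[(i)]
\item If $d\ge N$, the expression already exceeds $b+d>b$. The value $a+b$ is then excluded because $d+a\lfloor d/a\rfloor\ge N+a\lfloor N/a\rfloor>a$, using $N\ge a$.
\item If $1\le d<a$, the value is $d+\chi_1a+\chi_2b$ with $0<d<a$, which is never congruent to $0$ modulo $\gcd$-compatible combinations of $a$ and $b$. It equals none of $a,b,a+b$ because it lies strictly between consecutive sums in $\{0,a,b,a+b\}$.
\item If $a\le d<N$, the base term $d+a\lfloor d/a\rfloor$ is a difference inside the one-move list. Using $N=C_a(b)$, every such difference is either below $b$ and different from $a$, or it overshoots. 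This reduces to the block description of $\mathcal Z$ in Proposition~\ref{prop:base} and to the collision facts of Remark~\ref{rem:Zfacts}, since $w(n)$ for $n<N$ enumerates $\mathcal Z$.
\end{enumerate}

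\emph{Every non-$\P$ position has a move into $T$.} Take $x$ strictly inside a gap $(w(n),w(n+1))$. For a gap of length $a+1$, the move $a$ lands in the preceding run of $a$ consecutive points. For a gap of length $b+1$ or $a+b+1$, the move $b$ or $c=a+b$ lands in the corresponding earlier block, because the block structure is translated by exactly $b$ across $\lfloor n/N\rfloor$-jumps.

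\emph{Periodicity of the $\P$-set.} Shifting $n$ by $Q=\operatorname{lcm}(a,N)$ increases $\lfloor n/a\rfloor$ by $Q/a$ and $\lfloor n/N\rfloor$ by $Q/N$. Hence $w(n+Q)=w(n)+2Q+bQ/N=w(n)+p_{\mathrm{add}}$, and $\W_0(S)$ is purely periodic with period $p_{\mathrm{add}}$. The table entries follow by substituting the two parity forms of $N$ from Proposition~\ref{prop:base}. For even $\eta$ we have $a\mid N$, so $Q=N$; for odd $\eta$ we have $\gcd(a,m_\eta)=1$, which follows from $\gcd(a,b)=1$ since $m_\eta\equiv\varepsilon\equiv b\pmod a$.

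\emph{Least period and nim-values.} For minimality, suppose a period $p'$ of $T$ maps index $n$ to $n+s$, for a fixed $s$ because $T$ is periodic. Then the gap pattern must be invariant under the index shift $s$. The pattern of $(a+1)$-gaps has least period $a$ in the index, and the pattern of $b$-jumps has least period $N$. These patterns are distinguishable because $a\ne b$. Hence $s$ is a multiple of $\operatorname{lcm}(a,N)=Q$, and so $p'\ge p_{\mathrm{add}}$. For $\G_S$, Lemma~\ref{lem:ferguson} gives $\W_1=T+a$, which is periodic with the same period. The remaining positions carry the values $2$ and $3$, and we must show that their split is also $p_{\mathrm{add}}$-periodic, by an induction over one period analogous to Proposition~\ref{prop:inert-iff}.

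We expect the main obstacle to be this last step, the exact identification of the value-$2$ and value-$3$ sets, together with the collision case (iii) in full generality for odd $\eta$. We would first try to show that value $2$ is attained exactly at the positions $y$ with $y-2a\in T$ whose $c$-option does not have value $2$, and then read the period off that description.
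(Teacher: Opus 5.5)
Your period computation, the parameter table and the gap-sequence minimality argument agree with the paper. The gap is in the covering half of the recurrence, which rests on a mechanism that is false.

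\textbf{Covering.} Take the odd-$\eta$ game $S=\{3,7,10\}$. Here $N=4$, $p=45$, and $T\cap[0,45)=\{0,1,2,6,14,15,19,20,28,32,33,34\}$. The step from $w(5)=15$ to $w(6)=19$ is an $(a+1)$-gap, but the three preceding points $6,14,15$ are not a run, because a $b$-jump at index $4$ intervenes. From $16$ the $a$-move reaches $13\notin T$, and only the $c$-move to $6$ works. The groups are also not translates of one another by $b$, nor translates of $\mathcal Z=\{0,1,2,6\}$: group $1$ is $14+\{0,1,5,6\}$ and group $2$ is $28+\{0,4,5,6\}$.

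This also breaks your reduction of case (iii) to Remark~\ref{rem:Zfacts}. That case can be finished inside your own Lemma~\ref{lem:diffspec} framework. Write $d=qa+r$; then $2qa+r+\chi_1a+\chi_2b$ equals $b$ or $a+b$ with $d<N$ only when $\eta$ is even, $d=\tfrac{\eta}{2}a+\varepsilon$, $\chi_1=1$ and $\chi_2=0$, and $a\mid N$ forbids this. The same coupling of the carries, via $N=a$, is needed in (ii). There $d+a=b$ at $d=\delta$ when $\eta=0$, so your ``strictly between'' claim is false.

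What the covering step needs is a global count. Put $W=T\cap[0,p)$, so $|W|=Q$ and $\max W=p-c-1$. Then $W+a$, $W+b$, $W+c$ lie in $[0,p)\setminus W$, and their only possible overlap is $(W+a)\cap(W+b)$, i.e.\ pairs of $W$ at distance $\delta$. Such pairs lie in one index group and number $\gamma Q/N=4Q-p$, where $\gamma=2N-b$. Inclusion--exclusion then gives $p-Q$ covered points, which is the entire complement.

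\textbf{Nim-values.} You leave this half open, and the analogy with Proposition~\ref{prop:inert-iff} does not apply. There is no base game with known values to compare against. Periodicity of $\W_0$ and $\W_1$ only makes the value-$2$ set the solution of a recursion on $V$, and in the first period that recursion sees truncated options, so pure periodicity does not follow by itself. Your tentative description is still recursive and omits the $b$-option.

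The idea you are missing is a closed form. Modulo $p$,
$\G_S(x)=\min\{i:x\in\W_0+o_i\}$ with $(o_0,o_1,o_2,o_3)=(0,a,-b,-\delta)$.
Every difference of two shifts except $o_3-o_0=-\delta$ is $\pm a$, $\pm b$ or $\pm c$. Hence the four translates are disjoint apart from $\W_0\cap(\W_0-\delta)$, and the same count $4Q-p$ shows that they cover $\Z/p\Z$. A strong induction then closes:
\begin{enumerate}
\item[(a)] If $x\in\W_0-b$, then $x\notin\W_0\cup\W_1$ by the collision property. No option of $x$ lies in $\W_0-b$, since two $\P$-positions would then be a move apart. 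So $\G_S(x)=2$.
\item[(b)] If $x\in(\W_0-\delta)\setminus\W_0$, the $a$-option lies in $\W_0-b$. So $\G_S(x)=3$.
\end{enumerate}
This makes $\G_S$ visibly $p$-periodic. Its least period equals that of $\W_0$, because $\W_0$ is a level set of $\G_S$.
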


\begin{proof}
We first prove the formula for $\W_0$ and its least period. Let
$u(n)=n+a\lfloor n/a\rfloor$, and put
\[
  \gamma=2N-b,\qquad p=2Q+bQ/N,\qquad
  W_{\mathrm a}=\{u(n)+b\lfloor n/N\rfloor:0\le n<Q\}.
\]
The definition of $N$, together with $\gcd(a,b)=1$ and $a\ge2$, gives
$0<\gamma<a$ and $N\bmod a\in\{0,\gamma\}$. The proposed enumeration is
strictly increasing and satisfies
\[
  w(n+Q)=w(n)+p,\qquad \max W_{\mathrm a}=w(Q-1)=p-a-b-1.
\]
Thus $W_{\mathrm a}$ and its translates by $a,b,c$ lie in $[0,p)$.

\smallskip\emph{No move joins two proposed $\P$-positions.}
Within a group $[jN,(j+1)N)$ of indices, the term $bj$ cancels from a
difference. The one-move $\P$-set has no difference $a$. If $N=ha$, the
group's diameter in $u$ is $2N-a-1<b$, so neither $b$ nor $c$ occurs.
If $N=ha+\gamma$, write an index difference as $d=qa+r$, $0\le r<a$.
Lemma~\ref{lem:diffspec} gives
\[
  u(n)-u(n-d)=2qa+r+a\chi,\qquad
  \chi=[\,n\bmod a<r\,]\in\{0,1\}.
\]
Since $b=2ha+\gamma$ and $c=(2h+1)a+\gamma$, either move difference
would require $r=\gamma$ and $q=h$, hence $d=N$. This is impossible
within one group.

If two indices lie in different groups, the corresponding values of
$w$ differ by more than $b$. If their group numbers differ by at least
two, the difference exceeds $2b>c$. For adjacent groups, a difference
$c=a+b$ would require the corresponding values of $u$ to differ by
$a$, which is impossible. Thus no move difference occurs. A move difference
modulo $p$ that is not a literal move difference would have absolute
value $p-s\ge p-c$ for some $s\in\{a,b,c\}$, exceeding the diameter
of $W_{\mathrm a}$. Repeating $W_{\mathrm a}$ with step $p$ therefore
creates no collision at a join.

\smallskip\emph{Every other position has a move into the proposed set.}
The translates $W_{\mathrm a}+a$, $W_{\mathrm a}+b$, $W_{\mathrm a}+c$
lie in $[0,p)\setminus W_{\mathrm a}$. Their only possible overlap is
$(W_{\mathrm a}+a)\cap(W_{\mathrm a}+b)$: the other two pairs would
require a difference $b$ or $a$ between points of $W_{\mathrm a}$.
The overlap is counted by pairs of points at distance $\delta=b-a$.
Such a pair lies within one index group, since $\delta<b$.
For $n'<n$ in $[jN,(j+1)N)$, the complete list of possibilities is
\[
\begin{array}{c|c|l}
N\bmod a&n-n'&\text{conditions on }n'\\ \hline
0&N-\gamma&jN\le n'<jN+\gamma\\[2pt]
\gamma&N-a&jN\le n'<jN+a,\quad n'\bmod a\ge a-\gamma.
\end{array}
\]
To obtain the rows, substitute $\delta=(2h-2)a+(a-\gamma)$ for $N=ha$
and $\delta=(2h-1)a+\gamma$ for $N=ha+\gamma$ into the index form.
The respective carries are $\chi=0$ and $\chi=1$; the latter is equivalent
to $n'\bmod a\ge a-\gamma$. Requiring both indices to remain in the group
gives the stated bounds. Each row has $\gamma$ pairs: in the second,
the $a$ possible starting indices contain each residue once. Thus
\[
  N_\delta:=\#\{w\in W_{\mathrm a}:w+\delta\in W_{\mathrm a}\}
           =\gamma Q/N=4Q-p
           =\begin{cases}a-\varepsilon,&\eta\text{ even},\\
                         a\varepsilon,&\eta\text{ odd}.
             \end{cases}
\]
Here $\gamma=a-\varepsilon$ for even $\eta$, and $\gamma=\varepsilon$
for odd $\eta$, using the parameter table above. There is no triple
overlap, so inclusion--exclusion gives
\[
  |(W_{\mathrm a}+a)\cup(W_{\mathrm a}+b)\cup(W_{\mathrm a}+c)|
  =3Q-\gamma Q/N=p-Q=|[0,p)\setminus W_{\mathrm a}|.
\]
Thus these translates cover the entire complement.

Let $T=W_{\mathrm a}+p\N$, which has no move into itself. For
$x=kp+y\notin T$, $0\le y<p$, the covering gives $y=w+s$ with
$w\in W_{\mathrm a}$ and $s\in\{a,b,c\}$. Since $x-s=kp+w\ge0$,
this move is legal and reaches $T$. The recurrence holds, including at the
initial boundary, so $T=\W_0(S)$ by Lemma~\ref{lem:unique}.

\smallskip\emph{The least period.}
The successive gaps are
\[
  w(n+1)-w(n)
  =1+a[\,a\mid n+1\,]+b[\,N\mid n+1\,].
\]
They have period $Q$, and their largest value $1+a+b$ occurs exactly
when $Q\mid n+1$. Hence their least period is $Q$. A spatial period
of the $\P$-set translates its increasing enumeration by a fixed
number of indices and so induces a period of the gap sequence.
The least spatial period is therefore $w(Q)-w(0)=p$.

The parameter table follows from $a\mid N$ when $\eta$ is even and
$\gcd(a,N)=\gcd(a,\varepsilon)=1$ when $\eta$ is odd. For the additive
reconstruction we retain, in the even case,
\[
  \mathcal Z_{\mathrm a}:=\bigcup_{j=0}^{\eta/2}[2ja,(2j+1)a)
  =W_{\mathrm a}.
\]

Corollary~\ref{cor:grundy-period}, using the $\W_0$-formula and
collision counts above, now gives $p$-periodicity of $\G_S$ independently
of the nim-value conclusion here (Remark~\ref{rem:deferred}). Its least
period is $p$, since the zero set already has that least period.
\end{proof}

\begin{remark}\label{rem:additive-dilate}
If $g=\gcd(a,b)>1$ and $a/g\ge2$, apply Theorem~\ref{thm:additive}
to $S'=(a/g,b/g,c/g)$ and then Lemma~\ref{lem:gcd}:
\[
  q(S)=0,\qquad p(S)=g\cdot p_{\mathrm{add}}(a/g,b/g),\qquad
  \W_0(S)=\{gm+r:\ m\in\W_0(S'),\ 0\le r<g\}.
\]
The even-$\eta$ period formula commutes with dilation; the odd-$\eta$
formula $a(2m_\eta+b)$ requires coprimality. For $(a,b,c)=(2,4,6)$ the
latter gives $16$, whereas dilating $\G_{\{1,2,3\}}=(0123)^\infty$ gives
least period $8$. The factor $\gcd(a,N)$ in the unified formula distinguishes
these values. Quotients with $a/g=1$ are outside the theorem's
hypotheses; use Table~\ref{tab:a1-known} and Lemma~\ref{lem:gcd} instead.
\end{remark}

\begin{remark}\label{rem:pq}
Flammenkamp's Satz~5 \cite{Flammenkamp} gives a floor-function
parametrization agreeing, after translation of his parameter $r$, with
the two parity specializations above. The unified formula uses the base
count $N$ and supplies the reconstruction data of \S\ref{sec:lfunction}.
At the outcome level, \cite[proof of Theorem~5.1, pp.~25--27]{MiklosPost}
derives the additive period word and least period for all positive integers
$a<b$, without a coprimality assumption.\footnote{For the quadratic case,
we use the word in their equation~(21) and the period at the end of the proof;
the quadratic branches
of equations~(19)--(20) contain typographical errors.}
The period is also obtained independently in \cite[Corollary 4]{BLMY}. The period was
claimed without proof in \cite[vol.~3, p.~531]{berlekamp2004winning} and
restated in Flammenkamp's Satz~5; Ho \cite{Ho} records the even-$\eta$
nim-value word from the same source. Moriwaki's thesis
\cite{MoriwakiThesis} gives the periods for both parities and the complete nim-value word
for odd $\eta$. Here the transfer from outcomes to the nim-sequence is
Appendix~\ref{app:additive} and Lemma~\ref{lem:sameperiod}. For $\eta=1$
the generator is $\alpha_n=n+a\lfloor n/a\rfloor+b\lfloor n/\delta\rfloor$,
as in \cite{berlekamp2004winning,Larsson}.
\end{remark}


\section{Reconstruction of the higher nim-values: the \texorpdfstring{$L$}{L}-function}\label{sec:lfunction}

Throughout this section we assume $a\ge2$ and $\gcd(a,b,c)=1$.
We call $S$ \emph{covered} if it is additive
($c=a+b$, Theorem~\ref{thm:additive}) or has an admissible angle
(Theorem~\ref{thm:main}). For these games, the preceding sections
give the $\P$-position set $\W_0=\{x:\G_S(x)=0\}$ explicitly.
We now recover the remaining classes $\W_i=\{x:\G_S(x)=i\}$:
Ferguson's pairing determines $\W_1$, and the $L$-function
distinguishes values $2$ and $3$ on the remaining positions.
We then show that this reconstruction transfers periodicity from
$\W_0$ to the full nim-sequence, and that their least periods agree.
Under Conjecture~\ref{conj:necessity}, the covered games are exactly
the purely periodic games under the standing hypotheses.

By Lemma~\ref{lem:ferguson} and $\G_S\in\{0,1,2,3\}$, the set
\[
  V:=\N\setminus(\W_0\cup\W_1)=\{x:\G_S(x)\in\{2,3\}\}
\]
is exactly the value-$2$ and value-$3$ positions; in particular
$\N=\W_0\sqcup\W_1\sqcup V$ with no further hypothesis, so this partition is exhaustive.

\needspace{15\baselineskip}
\begin{defi}\label{def:L}
Adopt the convention that $y\notin V$ for $y<0$. For $v\in V$ let
\[
  L(v):=\min\{t\in\N:\ v-at\notin V\}-1,
\]
so $L(v)$ counts the successive backward steps of size $a$ that remain
in $V$. The minimum exists, since $v-at<0$ for large $t$. Define the
\emph{candidate} classes
\[
  V_2:=\{v\in V:L(v)\text{ even}\},\qquad V_3:=\{v\in V:L(v)\text{ odd}\},
\]
so that $V=V_2\sqcup V_3$.
\end{defi}

Theorem~\ref{thm:lfunction} below shows $V_2=\W_2$ and $V_3=\W_3$: the $L$-parity
candidates are the true value classes.

Equivalently, in terms of set operations: shift $V$ upward by $a$ repeatedly and count
consecutive collisions. A position of $V$ that survives
$V\cap(V+a)\cap\dots\cap(V+ta)$ but not the next intersection has $L=t$; the value is
$2$ after an even number of collisions and $3$ after an odd number. All the structure
of the nim-values beyond Ferguson's pairing is in this single parity.

Consider a run start $v\in V$, i.e.\ $L(v)=0$, so $v-a\notin V$. A position $<a$ has no move
and lies in $\W_0$, hence $v\ge a$ and $v-a\in\W_0\cup\W_1$. If $v-a\in\W_0$, then Ferguson
(Lemma~\ref{lem:ferguson}) gives $\G_S(v)=1$, contradicting $v\in V$; thus $v-a\in\W_1$, so
$v-2a\in\W_0$ and $\G_S(v-a)=1$. Hence
$\G_S(v)=\operatorname{mex}\bigl(\{1\}\cup\{\G_S(v-s):s\in\{b,c\},\ s\le v\}\bigr)$,
and $\G_S(v)=3$ holds iff $v\ge c$ and $\{\G_S(v-b),\G_S(v-c)\}=\{0,2\}$. The first step is thus to rule out this pair, using the closed form of $\W_0$.

The same descent gives $L$ itself a closed form. For $v\in V$ let $\Pi_a(v)$ be the
largest element of $\W_0$ that is smaller than $v$ and congruent to $v$ modulo $a$.
Descending the $a$-chain from $v$, the first position outside $V$ lies in $\W_1$ by
the argument just given, and the position $a$ below it lies in $\W_0$; the positions
of the class passed on the way lie in $V$ or $\W_1$, so no element of $\W_0$
intervenes. Hence $v\ge2a$ and
\[
  L(v)=\frac{v-\Pi_a(v)}{a}-2 .
\]
Thus $L(v)$ has the same parity as the number of $a$-steps from
$\Pi_a(v)$ to $v$. The run containing $v$ starts at $\Pi_a(v)+2a$; for a
covered game, Theorem~\ref{thm:lfunction} below shows that this run start has
value $2$.

For the non-additive part of the proof, we use the following hypothesis:
\begin{itemize}
\item[\textup{(H)}] either $\W_0(S)=\W_0(B)$ \textup{(}put $p:=a+b$\textup{)}, or
there is $P\in\{c+a,c+b\}$ with $\rho$ admissible for $P$ and $\W_0(S)=T_P$
\textup{(}put $p:=P$\textup{)}.
\end{itemize}
Every covered non-additive $S$ satisfies \textup{(H)} by
Theorem~\ref{thm:verify}.

Under \textup{(H)} the quantity $\Pi_a(v)$ becomes explicit. Write $u:=v\bmod p$;
note $u\ge a$ for $v\in V$, since $u<a$ would give $v\in\W_0$ directly from
\textup{(H)}. By Proposition~\ref{prop:base} in the inert case and
Proposition~\ref{prop:sieveform} in the sieve cases, the fundamental block
$\W_0\cap[0,p)$ is explicit, and it contains $[0,a)$ because positions below $a$
have no move; so the interval $\bigl[p\lfloor v/p\rfloor,\,p\lfloor v/p\rfloor+p\bigr)$
containing $v$ contains a $\P$-position below $v$ congruent to $v$ modulo $a$. Since no element of $\W_0$ in the class of $v$ intervenes
between $\Pi_a(v)$ and $v$, the descent never leaves this interval:
$a\,(L(v)+2)\le u$, hence $\Pi_a(v)=p\lfloor v/p\rfloor+h_a(u)$ and
\[
  L(v)=\frac{u-h_a(u)}{a}-2,\qquad
  h_a(u):=\max\bigl\{h\in\W_0\cap[0,p):\ h<u,\ h\equiv u\ (\mathrm{mod}\ a)\bigr\}.
\]
The maximum ranges over an explicit finite set, so under \textup{(H)} no game
recursion remains in $L$.

\begin{lemma}\label{lem:Vstructure}
Let $S$ be non-additive and assume \textup{(H)}. Write $\W_0:=\W_0(S)$. Then:
\begin{enumerate}
\item[(0)] \textup{(}periodicity\textup{)} $\W_0$, $\W_1$ and $V$ are $p$-periodic from $0$, $L(y+p)=L(y)$ for every
$y\in V$, and no two classes of $\W_0$ differ by $a$, $b$ or $c$ modulo $p$;
\item[(i)] \textup{(}interior\textup{)} $V\cap[0,c)$ is the set of value-$2$ positions of $B$ below $c$; every $y\in V\cap[0,c)$
has $y\bmod(a+b)\in U$, $y\ge b$, $y-2a\in\W_0$, and $L(y)=0$;
\item[(ii)] \textup{(}first seam block\textup{)} in the sieve case, every $y\in V\cap[c,c+a)$ has $L(y)\le1$, and if $L(y)=0$
then the class of $y-2a$ lies in $\W_0$;
\item[(iii)] \textup{(}terminal seam block\textup{)} if $p=c+b$, then no $y\in V\cap[c+a,c+b)$ has $y-b\in V$.
\end{enumerate}
\end{lemma}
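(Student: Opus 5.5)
The four items are handled one at a time, each using only hypothesis \textup{(H)}, the explicit fundamental block (Proposition~\ref{prop:base} or Proposition~\ref{prop:sieveform}) and Ferguson's pairing.

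\emph{Item \textup{(0)}.} Under \textup{(H)}, $\W_0$ is $p$-periodic from $0$. In the inert case this is Proposition~\ref{prop:base}; in the sieve cases it holds because $T_P$ is defined through $x\bmod P$. Lemma~\ref{lem:ferguson} gives $\W_1=\W_0+a$. This set is $p$-periodic provided the translate does not spoil periodicity near $0$. That holds because $[0,a)\subseteq\W_0$, so $\W_1\cap[0,a)=\varnothing$, and periodicity of $\W_0$ gives $(\W_0+a)\cap[kp,kp+p)=(\W_0\cap[0,p))+a+kp$ modulo the wrap. Hence $V$, the complement of $\W_0\sqcup\W_1$, is periodic as well.

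For $L$, use the closed form $L(v)=(u-h_a(u))/a-2$ derived just before the lemma. It depends only on $u=v\bmod p$, so $L(y+p)=L(y)$.

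The claim that no two classes of $\W_0$ differ by $a,b,c$ modulo $p$ is the repetition argument. Inside the block $H=\W_0\cap[0,p)$, no move difference occurs because $H$ is part of the real $\P$-set. A difference modulo $p$ is either a literal move or a complementary difference $p-s$. These are excluded exactly as in the proof of Theorem~\ref{thm:verify}, or via \eqref{eq:Wcoll} in the inert case.

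\emph{Item \textup{(i)}.} Below $c$ the games $S$ and $B$ agree, so $V\cap[0,c)$ is the value-$2$ set of $B$ there. Let $y$ lie in this set. Lemma~\ref{lem:descent} gives $y\ge2a$ and $y-2a\in\W_0(B)=\W_0$ below $c$. The residue statement $y\bmod(a+b)\in U$ follows from the partition of Remark~\ref{rem:Zfacts}\textup{(ii)}. Indeed, value $2$ positions are neither in $\mathcal Z$ nor in $\mathcal Z+a$, which are the residues of $\W_0(B)$ and $\W_1(B)$, the latter by Ferguson's pairing together with $\max\mathcal Z+a<a+b$.

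Since every element of $U$ is at least $(\eta+1)a+\varepsilon$, and this is at least $b$ in both parity cases, we get $y\ge b$. Finally, $L(y)=0$ because $y-a$ has residue in $U-a\subseteq\mathcal Z+a$, so $y-a\in\W_1$.

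\emph{Item \textup{(ii)}.} Let $y=c+u$ with $0\le u<a$. By the first-block calculation, $\W_0\cap[c,c+a)=\varnothing$, and $\W_1\cap[c,c+a)=(\W_0\cap[c-a,c))+a$, which is base data. Descend $y,y-a,y-2a$; these positions lie below $c$ except $y$ itself. If $y-a\notin V$, then $y-a\in\W_1$ by the run-start argument, so $y-2a\in\W_0$.

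If $y-a\in V$, then $y-a<c$ lies in $V\cap[0,c)$. Item \textup{(i)} gives $L(y-a)=0$, hence $L(y)=1$.

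\emph{Item \textup{(iii)}, the main obstacle.} Take $y=c+u$ with $u\in[a,b)$ and suppose $y-b=c+u-b\in V$. By \textup{(i)}, $c+u-b$ has value $2$ in $B$, so its residue lies in $U$. Then Remark~\ref{rem:Zfacts}\textup{(ii)} puts $c+u-b+a\equiv c+u$ into $\mathcal Z$ modulo $a+b$, so $y\in W$.

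Now use the block description $Y(c+u)=z(c+u)(1-z(u))$ from the proof of Theorem~\ref{thm:verify}. If $u\notin\mathcal Z$, then $y\in\W_0$, contradicting $y\in V$. If $u\in\mathcal Z$, then $u-a$ or the structure of $\mathcal Z$ must be used. Here I expect to invoke \textup{(K2)}, or the admissibility condition $(\rho+D)\cap\Lambda=\varnothing$, to show that $y\in\W_1$, i.e.\ $y-a\in\W_0$. This is the delicate part: it requires checking, via $\Lambda=\{w:w,w+a\notin\mathcal Z\}$, that the residue of $y-a$ falls in the retained band rather than being deleted. Completing this step means matching the residue of $y-a$ against $T_{c+b}$, using Lemma~\ref{lem:T2}.
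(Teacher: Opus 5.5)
Your items (0)--(ii) are sound. In (i), observing directly that the residue of $y-a$ lies in $U-a\subseteq\mathcal Z+a$ is a fine substitute for the paper's contradiction via $(U+a)\bmod(a+b)\subseteq\mathcal Z$. One small point in (0): in the inert case \eqref{eq:Wcoll} does not handle the move $c>p$. The simplest route for the collision claim is that $p$-periodicity lifts two classes differing by $m\in\{a,b,c\}$ modulo $p$ to two actual $\P$-positions at literal distance $m$, which Lemma~\ref{lem:unique} forbids.

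The genuine gap is in item (iii), and it begins with a sign error. From $y-b\in V\cap[0,c)$ you correctly get $(y-b)\bmod(a+b)\in U$. However, $c+u-b+a=y-\delta\not\equiv y$. Since $b\equiv-a$, the residue $r=y\bmod(a+b)$ satisfies $(r+a)\bmod(a+b)\in U$, that is, $r\in U-a=\Lambda$. This is the same translate you used in (i), and $\Lambda\subseteq\mathcal Z+a$ gives $r\notin\mathcal Z$ and $r-a\in\mathcal Z$. So $y$ is \emph{not} a base $\P$-position, the opposite of your conclusion. Consequently your branch $u\notin\mathcal Z$ yields nothing: the block formula gives $y\notin\W_0$, which is consistent with $y\in V$. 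Your plan for $u\in\mathcal Z$ is also blocked by your own premise. Indeed, $r\in\mathcal Z$ would give $r-a\notin\mathcal Z$, hence $y-a\notin\W_0$ and $y\notin\W_1$, so no contradiction could be reached that way.

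With the correct residue both branches close. If $u\notin\mathcal Z$, then $y\notin\W_0(B)$ and $y-c\notin\W_0(B)$. Condition \textup{(K2)}, valid by Lemma~\ref{lem:T2} since $\rho\in\Th_{c+b}$ under (H), then gives $y-b\in\W_0(B)$, i.e.\ $(r+a)\bmod(a+b)\in\mathcal Z$, contradicting its membership in $U$. If $u\in\mathcal Z$, then $\mathcal Z\cap[a,2a)=\varnothing$ forces $u\ge2a$, so $y-a=c+(u-a)$ lies in $[c+a,c+b)$. There the block formula reads: $y-a\in\W_0$ if and only if $r-a\in\mathcal Z$ and $u-a\notin\mathcal Z$. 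Since $y\in V$ gives $y\notin\W_1$, we have $y-a\notin\W_0$; together with $r-a\in\mathcal Z$ this forces $u-a\in\mathcal Z$. Then $u-a$ and $u$ are two elements of $\mathcal Z$ differing by $a$, contradicting Remark~\ref{rem:Zfacts}(i). This second branch needs neither \textup{(K2)} nor the condition $(\rho+D)\cap\Lambda=\varnothing$ that you proposed to invoke.
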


\begin{proof}
(0) By \textup{(H)}, $\W_0$ is $p$-periodic from $0$: in the inert case
$\W_0=\W_0(B)$ is $(a{+}b)$-periodic by Proposition~\ref{prop:base}, and in the
sieve case $\W_0=T_P$ is $P$-periodic by construction. For
$\W_1=\W_0+a$: if $x\ge a$ then $x\in\W_1\iff x-a\in\W_0\iff x-a+p\in\W_0\iff
x+p\in\W_1$, and if $x<a$ then $x\in[0,a)\subseteq\W_0$ excludes $x\in\W_1$, while
$x+p\in\W_0$ excludes $x+p\in\W_1$. Hence $\W_1$ and $V$ are $p$-periodic from $0$
as well. For $L$: descending from $y\in V$ in steps of $a$ one reaches
$y-a\lfloor y/a\rfloor\in[0,a)\subseteq\W_0$, so every chain terminates at a non-negative
point, and the $p$-periodicity of $V$ gives $L(y+p)=L(y)$. For the last assertion: two
classes of $\W_0$ differing by $m\in\{a,b,c\}$ modulo $p$ have, after translating one of
them by a multiple of $p$, representatives $x,y\in\W_0$ with $x-y=m$ exactly; then the
$\P$-position $x$ has the $\P$-position option $x-m=y$, contradicting
Lemma~\ref{lem:unique}.

(i) Below $c$ the move $c$ is unavailable, so $\W_0\cap[0,c)=\W_0(B)\cap[0,c)$
\textup{(}by Definition~\ref{def:sieve} in the sieve case, trivially in the inert
case\textup{)} and $\W_1\cap[0,c)=\W_1(B)\cap[0,c)$; hence $V\cap[0,c)$ is the value-$2$
set of $B$ below $c$. Lemma~\ref{lem:descent} gives $y-2a\in\W_0(B)$, and $y-2a<c$ puts it in
$\W_0$. For the residues: $y\notin\W_0(B)\cup\W_1(B)$ means $y\bmod(a+b)\in U$
\textup{(}Remark~\ref{rem:Zfacts}\,(ii)\textup{)}, and $\min U=(\eta+2)a$ for $\eta$ even,
$(\eta+1)a+\varepsilon$ for $\eta$ odd, both at least $b$; hence
$y\ge y\bmod(a+b)\ge b$. Finally $L(y)=0$: if $y-a\in V$ then $y-a<c$ as well, so both
residues lie in $U$; but then $y\bmod(a+b)\in(U+a)\bmod(a+b)\subseteq\mathcal Z$
\textup{(}Remark~\ref{rem:Zfacts}\,(ii)\textup{)}, contradicting $y\bmod(a+b)\in U$.

(ii) If $L(y)\ge2$ then $y-a,y-2a\in V\cap[0,c)$ differ by $a$, which contradicts (i)
exactly as above. If $L(y)=0$ then $y-a\notin V$; $y-a\in\W_0$ would give $y\in\W_1$,
excluded by $y\in V$; so $y-a\in\W_1$, i.e.\ $y-2a\in\W_0$.

(iii) Let $y\in V\cap[c+a,c+b)$ and suppose $y-b\in V$. Since $y-b\in[c-\delta,c)$, part
(i) gives $(y-b)\bmod(a+b)\in U$; as $b\equiv-a\pmod{a+b}$, this reads $(r+a)\bmod(a+b)\in U$ with
$r=y\bmod(a+b)$. Now $U=\Lambda+a$: for $\eta$ even
$\Lambda+a=[(\eta+2)a,(\eta+2)a+\varepsilon)=U$, and for $\eta$ odd
$\Lambda+a=[(\eta+1)a+\varepsilon,(\eta+2)a)=U$. Hence $r\in\Lambda\subseteq\mathcal Z+a$,
so $r\notin\mathcal Z$ and $r-a\in\mathcal Z$. Put $u:=y-c\in[a,b)$. If
$u\notin\mathcal Z$: then $y\notin\W_0(B)$ and $y-c\notin\W_0(B)$, so \textup{(K2)}, which holds
because $\rho\in\Th_{c+b}$ by \textup{(H)} via Lemma~\ref{lem:T2}, gives
$y-b\in\W_0(B)$, i.e.\
$(r+a)\bmod(a+b)\in\mathcal Z$, contradicting $(r+a)\bmod(a+b)\in U$. If $u\in\mathcal Z$: then
$u\ge2a$, since $\mathcal Z\cap[a,2a)=\varnothing$ by the block description of
Proposition~\ref{prop:base}; hence $y\ge c+2a$ and $y-a\in[c+a,c+b)$. Since $y\in V$ we
have $y\notin\W_1$, so the class of $y-a$ is not in $\W_0$; on $[c+a,c+b)$ this reads:
not \textup{(}$(r-a)\in\mathcal Z$ and $(u-a)\notin\mathcal Z$\textup{)}. As
$r-a\in\mathcal Z$, this forces $u-a\in\mathcal Z$; but then $u-a,u\in\mathcal Z$ differ
by $a$, contradicting Remark~\ref{rem:Zfacts}\,(i).
\end{proof}

\begin{lemma}[Run-start exclusion]\label{lem:runstart-na}
Let $S$ be non-additive and assume \textup{(H)}
\textup{(}stated before Lemma~\ref{lem:Vstructure}\textup{)}. Then
for every run start $v$, neither
the class of $v-b$ nor the class of $v-c$ modulo $p$ is an $L$-even element of $V$.
\end{lemma}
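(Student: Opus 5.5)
By the periodicity part (0) of Lemma~\ref{lem:Vstructure}, both $V$ and $L$ are $p$-periodic. It therefore suffices to work with residues. Fix a run start $v$, so $L(v)=0$. Write $u=v\bmod p$; by the remark before (H), $u\ge a$ and $u-2a$ lies in the fundamental block $\W_0\cap[0,p)$. Let $y$ denote the representative in $[0,p)$ of the class of $v-s$, for $s\in\{b,c\}$. We must show that $y\notin V$ or $L(y)$ is odd. The plan is a case analysis on where $u$ lies in the block: the interior $[0,c)$, the first seam block $[c,c+a)$, or, when $p=c+b$, the terminal seam block $[c+a,c+b)$. Then, for $s=b,c$, we locate $y\equiv u-s$ and apply the relevant part of Lemma~\ref{lem:Vstructure}.

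\emph{Interior, $u<c$.} Part (i) gives $u\bmod(a+b)\in U$ and $u-2a\in\W_0$.

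For $s=b$ with $u\ge b$, the position $u-b<c$ is a base position. Since $b\equiv-a$ on the circle, its residue is that of $u+a$, which lies in $(U+a)\bmod(a+b)\subseteq\mathcal Z$. So $u-b\in\W_0$, not in $V$.

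For $s=c$, or $s=b$ with $u<b$, the class $u-s+p$ lies in the seam region $[p-c,p)$ or higher. I would write $u-c+p=u+a$ (if $p=c+a$) or $u+b$ (if $p=c+b$), and similarly $u-b+p$. I would then decide membership in $V$ and $L$-parity from the closed form $L=\bigl(w-h_a(w)\bigr)/a-2$, using the explicit block from Proposition~\ref{prop:sieveform}. The point to exploit is that $u-2a\in\W_0$ is carried by the translation to a $\W_0$ element exactly $2a$ below (parity even) or $a$ below (so the translate lies in $\W_1$). The no-collision statement in (0), that no two $\W_0$ classes differ by $a,b,c$, should rule out the even case. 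In the inert case $p=a+b$ the translate by $-c$ is the rotation by $-\rho$. Here \eqref{eq:T0} together with Lemma~\ref{lem:descent} applied to both positions should show that $\G_B(v-c)\neq2$ whenever $\G_B(v)=2$, exactly as in the value argument of Proposition~\ref{prop:inert-iff}.

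\emph{Seam blocks.} For $u\in[c,c+a)$, part (ii) gives $L(u)=0$ and $u-2a\in\W_0$. Then $u-c\in[0,a)\subseteq\W_0$, so the $c$-class is not in $V$. The $b$-class $u-b$ lies in $[c-\delta,c)$; there I would use part (i), which forces $L=0$, and then exclude this case via the admissibility condition \eqref{eq:T1} or (K1). For $u\in[c+a,c+b)$, part (iii) directly excludes $u-b\in V$. The class $u-c\in[a,b)$ is a base position below $b$, and there $V$ is empty because $\mathcal Z\sqcup(\mathcal Z+a)\supseteq[0,b)$ by Remark~\ref{rem:Zfacts}(ii) and $\min U\ge b$.

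\emph{Main obstacle.} I expect the main obstacle to be the interior case with $s=c$ (and $s=b$ with wrap-around), where the class lands in a seam block and $L$ may equal $1$ or $0$. Here one must show that $L$-evenness would force a forbidden coincidence between $\mathcal Z$ and a rotation of $\mathcal Z$, $\Lambda$ or $D$. My first attempt would be to translate ``$y\in V$ with $L(y)$ even'' into ``$y-2a\in\W_0$ and $y-a\notin\W_0$'' via part (ii). Combined with $v-2a\in\W_0$, this produces two $\W_0$ classes at distance $s$, contradicting (0). Only the leftover sub-case $L(y)=0$ with an unexpected $\W_1$ would then need the explicit arc conditions \eqref{eq:T1}/\eqref{eq:T2}.
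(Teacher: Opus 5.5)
\textbf{There is a genuine gap.} Your last paragraph has the right mechanism, and it is the paper's device. If a representative $w$ of the class of $v-s$ lies in $V$ with $w-2a\in\W_0$, it collides with $v-2a\in\W_0$ at distance $s$, contradicting Lemma~\ref{lem:Vstructure}\,(0).

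That implication is only available where ``$L(w)$ even'' forces $L(w)=0$. This holds for $w<c$ by part (i) and for $w\in[c,c+a)$ by part (ii). You never close the case $p=c+b$, $s=c$, with an interior run start $u\in[c-\delta,c)$. There the representative $w=u+b$ lies in $[c+a,c+b)$. Parts (i) and (ii) say nothing about this block, and even values $L\ge2$ really occur in it. For $S=\{3,8,20\}$ (angle $9\in\Th_{c+b}$, $p=28$), the run start $u=19$ gives $w=27$. In the same block, $\G_S(25)=2$ with $22,19\in V$ and $16\in\W_1$, so $L(25)=2$. An $L$-even $w$ with $L(w)=2$ yields only $w-4a\in\W_0$, which produces no collision with $v-2a$.

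The missing step is part (iii) applied to $w$ itself. The conditions $w\in V\cap[c+a,c+b)$ and $w-b=u\in V$ are incompatible, so $w\notin V$; indeed $\G_S(27)=1$. You invoke (iii) only when the run start itself lies in the terminal block, for the $b$-class. Your fallback to ``explicit arc conditions'' for a ``leftover sub-case $L(y)=0$ with an unexpected $\W_1$'' targets an empty case: $L(y)=0$ already forces $y-a\in\W_1$ and $y-2a\in\W_0$. The step above is where admissibility is actually used in this lemma, through (K2) inside part (iii).

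Smaller corrections:
\begin{itemize}
\item Your sentence that $u-2a\in\W_0$ is ``carried by the translation to a $\W_0$ element'' is backwards. By (0), the class of $u-2a-c$ is never in $\W_0$, so the argument must run by contradiction, as in your last paragraph.
\item For $u\in[c,c+a)$, the $b$-representative lies in $[c-b,c-\delta)$, not $[c-\delta,c)$. Also, \eqref{eq:T1} and \textup{(K1)} are not needed there: if $u-b\in V$, part (i) puts $u-b-2a$ in $\W_0$, at distance $b$ from $u-2a$.
\item The interior sub-case $s=b$, $u<b$ is vacuous, since (i) gives $u\ge b$.
\item For $s=c$ with $u+(p-c)<c$ you need part (i), not part (ii).
\end{itemize}

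With these repairs, your split by the location of $v\bmod p$ becomes a correct, more hands-on variant of the paper's proof. The paper instead organizes the argument by $s$, around the single observation that no class of $v-s$ has a representative in $V\cap[0,c)$. It uses $p-b\le c$ to place every $b$-representative there, so (ii) and (iii) are needed only for the $c$-class of an interior run start.
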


\begin{proof}
All memberships are read on classes; the sets and $L$ are $p$-periodic by
Lemma~\ref{lem:Vstructure}\,(0). As shown before Lemma~\ref{lem:Vstructure} we have
$v-2a\in\W_0$. Let $y_0\in[0,p)$ represent the class of $v$; then $y_0\in V$ and the class
of $y_0-2a$ lies in $\W_0$.

We use repeatedly the following consequence of Lemma~\ref{lem:Vstructure}\,(i) and (0):
\begin{itemize}
\item[$(\ast)$] if the class of $v-s$ \textup{(}$s\in\{b,c\}$\textup{)} has a
representative $w\in V\cap[0,c)$, then $w-2a\in\W_0$ and the classes $v-2a$ and
$w-2a$ differ by $s$ modulo $p$, contradicting (0). So no such representative exists.
\end{itemize}

\emph{The class of $v-b$.} If $y_0<c$ then $y_0\ge b$ by
Lemma~\ref{lem:Vstructure}\,(i), and $w:=y_0-b\in[0,c)$ represents the class; if
$y_0\in[c,p)$ then $w:=y_0-b\in[c-b,\,p-b)\subseteq[0,c)$, since $p-b\le c$ in all three
cases of \textup{(H)}. Either way the class of $v-b$ has a representative in $[0,c)$, and
$(\ast)$ shows it is not in $V$; in particular it is not $L$-even in $V$.

\emph{The class of $v-c$.} If $y_0\in[c,p)$ then $w:=y_0-c\in[0,p-c)\subseteq[0,c)$
represents the class, and $(\ast)$ applies. In the inert case every class has a
representative in $[0,p)\subseteq[0,c)$ \textup{(}an inert angle has
$\rho\le b<c$, so $c<a+b$ is impossible and $c=a+b$ is excluded as additive; hence $p=a+b<c$\textup{)}, so this settles it. There remains the sieve case
with $y_0<c$, where the representative is $w:=y_0+(p-c)\in[p-c,\,c+p-c)$, i.e.\
$w=y_0+a<c+a$ for $p=c+a$ and $w=y_0+b<c+b$ for $p=c+b$. If $w<c$, apply $(\ast)$.
If $w\in[c,c+a)$ and $w\in V$: by Lemma~\ref{lem:Vstructure}\,(ii), $L(w)\le1$; if
$L(w)=0$ then the class of $w-2a$ lies in $\W_0$ and differs from that of $v-2a$ by
$p-c\equiv-c$, contradicting (0); so $L(w)=1$, and $w$ is $L$-odd. If
$w\in[c+a,c+b)$ \textup{(}possible only for $p=c+b$\textup{)} and $w\in V$: then
Lemma~\ref{lem:Vstructure}\,(iii) gives $w-b\notin V$; but $w-b=y_0\in V$, a
contradiction, so $w\notin V$. In every case the class of $v-c$ is not an $L$-even
element of $V$.
\end{proof}

\begin{thm}\label{thm:lfunction}
For every covered $S$ \textup{(}additive or meeting the criterion of
Theorem~\ref{thm:main}\textup{)} the value classes $\W_i=\{x:\G_S(x)=i\}$ satisfy
\[
  \W_1=\W_0+a,\quad \W_2=V_2,\quad \W_3=V_3,
\]
with $V_2,V_3$ the $L$-parity candidates of Definition~\ref{def:L}. Equivalently, $\W_0$
together with the $L$-function determines all of $\G_S$.
In the non-additive case, the proof uses the $\P$-set description and
admissibility conditions recorded in \textup{(H)} \textup{(}stated before
Lemma~\ref{lem:Vstructure}\textup{)}.
\end{thm}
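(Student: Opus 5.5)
We prove $\W_2=V_2$ and $\W_3=V_3$ by strong induction on $x\in V$, running along each $a$-chain. The identity $\W_1=\W_0+a$ is Lemma~\ref{lem:ferguson}, and $V=\W_2\sqcup\W_3$ holds with no hypothesis. So it suffices to show two things. First, a run start ($L(v)=0$) has value $2$. Second, if $v\in V$ and $v-a\in V$, then $\G_S(v)\ne\G_S(v-a)$. Since both values lie in $\{2,3\}$, the values then alternate along the run, and the parity of $L$ decides the value.

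The second claim is immediate. $v-a$ is an option of $v$, so $\G_S(v)\ne\G_S(v-a)$ by the mex rule. Hence, given the run-start claim, the induction along runs yields $\G_S(v)=2$ exactly when $L(v)$ is even.

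For the run-start claim, the discussion before Lemma~\ref{lem:Vstructure} gives $v-a\in\W_1$ and $v-2a\in\W_0$. It also shows that $\G_S(v)=3$ forces $\{\G_S(v-b),\G_S(v-c)\}=\{0,2\}$. By the induction hypothesis, an option of value $2$ is an $L$-even element of $V$, since the options are smaller than $v$.
\begin{itemize}
\item[\textup{(1)}] \emph{Non-additive case.} Lemma~\ref{lem:runstart-na} says that neither the class of $v-b$ nor the class of $v-c$ is an $L$-even element of $V$. Periodicity of $V$ and $L$ (Lemma~\ref{lem:Vstructure}\,(0)) transfers this from classes to the actual positions, whenever those positions are nonnegative. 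Therefore neither option has value $2$, so the pair $\{0,2\}$ is impossible and $\G_S(v)=2$. This needs (H), which holds for covered non-additive games by Theorem~\ref{thm:verify}.
\item[\textup{(2)}] \emph{Additive case.} Here there is no sieve structure, so we argue directly from Theorem~\ref{thm:additive}. We use the $p_{\mathrm{add}}$-periodic closed form of $\W_0$, given by the index-group description $W_{\mathrm a}$, together with the collision analysis from that proof. We show that for a run start $v$, the positions $v-b$ and $v-c=v-a-b$ cannot be $0$ and an $L$-even element of $V$ in either order.
\end{itemize}

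In case (2), suppose one of the two options is in $\W_0$, say $v-s\in\W_0$ with $s\in\{b,c\}$. Since $v-2a\in\W_0$ as well, two $\P$-positions differ by $s-2a$, which equals $\delta-a$ or $\delta$. The second case, $s=c$, produces pairs at distance $\delta$. These are exactly the $\gamma Q/N$ pairs enumerated in the table of the additive proof. For each such pair, we check that the other option, $v-b$ or $v-c$, lies in $\W_1$ or in an odd position of its run, using the explicit index-form differences of Lemma~\ref{lem:diffspec}.

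I expect this additive verification to be the main obstacle. It is a finite but fiddly residue computation, which I would carry out separately for $N\bmod a=0$ and for $N\bmod a=\gamma$. The paper defers it to Appendix~\ref{app:additive}, so I would organise it there as a case check over one period $[0,p_{\mathrm{add}})$, using $w(n+Q)=w(n)+p_{\mathrm{add}}$.
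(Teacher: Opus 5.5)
Your non-additive case follows the paper's argument: strong induction, alternation along $a$-chains by the mex rule, and Lemma~\ref{lem:runstart-na} at run starts. That part is fine.

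\textbf{The additive case is not proved, and your sketch locates the difficulty in the wrong place.} The case you single out, $v-c\in\W_0$, needs no enumeration of $\delta$-pairs. Since $c=a+b$, the other option is $v-b=(v-c)+a\in\W_0+a=\W_1$ by Lemma~\ref{lem:ferguson}.

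All the content lies in the case you mention and then drop: $v-b\in\W_0$ and $v-2a\in\W_0$. These are two $\P$-positions at distance $|b-2a|$, which the collision table in the proof of Theorem~\ref{thm:additive} does not describe. There you must show that $v-c=(v-b)-a$ is not an $L$-even element of $V$. This case does occur, and collisions in $\W_0$ do not settle it. For $S=\{2,3,5\}$ one has $p=7$, $\W_0=\{0,1\}+7\N$ and $\G_S=(0011223)^\infty$. The run start $v=11$ has $v-b=8$ and $v-2a=7$ in $\W_0$, while $v-c=6\in V$ with $L(6)=1$ and value $3$. Deciding the parity of $L$ at such positions requires knowing which elements of $V$ have value $2$, which is the statement you are trying to prove. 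Your promised ``residue check'' is left undone, and your argument supplies nothing for it to check against.

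\textbf{The missing ingredient is the translate description proved in Appendix~\ref{app:additive}.} Take shifts $(0,a,-b,-\delta)$. The four translates of $\W_0$ modulo $p$ are pairwise disjoint except for $\W_0\cap(\W_0-\delta)$, because every other difference of shifts is $\pm a$, $\pm b$ or $\pm c$. The $\delta$-pair count $N_\delta=\gamma Q/N=4|\W_0\cap[0,p)|-p$ from the proof of Theorem~\ref{thm:additive} then shows, by inclusion--exclusion, that the translates cover $[0,p)$. A direct induction on the mex recurrence gives $\W_2=\W_0-b$ and $\W_3=(\W_0-\delta)\setminus\W_0$.

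With this, the run-start claim takes one line. If $v\in\W_3$, then $(v-a)+b=v+\delta\in\W_0$, so $v-a\in\W_0-b=\W_2\subseteq V$, contradicting $v-a\in\W_1$. Equivalently, in your problematic case, $\G_S(v-c)=2$ would mean $v-a=(v-c)+b\in\W_0$, which is impossible.
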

\begin{proof}
It suffices to show that every run starts with value $2$.
Consecutive positions in an $a$-chain in $V$ have values in
$\{2,3\}$; since they are joined by a legal move, their values
must alternate. For non-additive games, we use
Lemma~\ref{lem:runstart-na} in a strong induction on the position.
For additive games, the value formula from Appendix~\ref{app:additive}
determines the run-start values.
Write $F$ for the candidate function: $F=0$ on $\W_0$, $F=1$ on $\W_1$,
and $F=2$ resp.\ $3$ on the $L$-even resp.\ $L$-odd part of $V$; the claim is
$\G_S=F$.

\emph{Non-additive $S$ under \textup{(H)}.} We argue by strong induction on $x$, assuming
$\G_S(y)=F(y)$ for all $y<x$. Throughout, $\G_S(x)=0\iff x\in\W_0$ \textup{(}the
definition of the $\P$-positions\textup{)} and $\G_S(x)=1\iff x\in\W_1$
\textup{(}Lemma~\ref{lem:ferguson}\textup{)}; in particular $\G_S(x)\in\{2,3\}$ for
$x\in V$. If $x\in\W_0\cup\W_1$ we are done. Let $x\in V$.

If $L(x)\ge1$: the option $x-a$ lies in $V$, and by induction
$\G_S(x-a)=F(x-a)\in\{2,3\}$; the mex rule gives $\G_S(x)\neq\G_S(x-a)$, so $\G_S(x)$ is
the other element of $\{2,3\}$, which is $F(x)$, since $F$ alternates along the chain by
construction.

If $L(x)=0$ \textup{(}a run start\textup{)} and $x<c$: the move $c$ is unavailable below
$c$, so $\G_S=\G_B$ on $[0,c)$, and $x$ is a value-$2$ position of $B$ by
Lemma~\ref{lem:Vstructure}\,(i); hence $\G_S(x)=2=F(x)$.

If $L(x)=0$ and $x\ge c$: all three moves are available. As shown before
Lemma~\ref{lem:Vstructure}, $x-a\in\W_1$, so $\G_S(x-a)=1$, and
$\G_S(x)=3$ would require a value-$2$ option, necessarily $x-b$ or $x-c$. By induction
$\G_S(x-b)=F(x-b)$ and $\G_S(x-c)=F(x-c)$, and by Lemma~\ref{lem:runstart-na} neither is
$2$ \textup{(}$F$ takes the value $2$ exactly on the $L$-even part of $V$, a
$p$-periodic set\textup{)}. Hence $\G_S(x)=2=F(x)$.

\emph{Additive $S=(a,b,a+b)$.} Here the value function is identified in closed form in
Appendix~\ref{app:additive}: with shifts $(o_0,o_1,o_2,o_3)=(0,\,a,\,-b,\,-\delta)$ one
has, modulo $p$,
\[
  \G_S(x)=F(x):=\min\{\,i:\ x\in\W_0+o_i\ (\mathrm{mod}\ p)\,\},
\]
so $\W_2=\W_0-b$ and $\W_3=(\W_0-\delta)\setminus\W_0$. These coincide with the
$L$-parity candidates $V_2,V_3$. Indeed, a run start $x$ \textup{(}$x\in V$,
$x-a\notin V$\textup{)} cannot lie in $\W_3$: $x\in\W_0-\delta$ gives
$(x-a)+b=x+\delta\in\W_0$, that is $x-a\in\W_0-b=\W_2\subseteq V$. So every run
start has value $2$. Along an $a$-chain in $V$, consecutive positions have values in
$\{2,3\}$ and the mex rule forbids $\G_S(x)=\G_S(x-a)$, so the values alternate;
hence the value is $2$ at even $L$ and $3$ at odd $L$.
\end{proof}

\needspace{12\baselineskip}
\begin{corollary}\label{cor:grundy-period}
Let $S$ be covered, with $p=p_{\mathrm{add}}$ in the additive case
\textup{(}Theorem~\ref{thm:additive}\textup{)} and $p$ the least admissible candidate otherwise \textup{(}Theorem~\ref{thm:main}\textup{)}, so that $x\in\W_0\iff x+p\in\W_0$
for all $x\ge0$. Then $p$ is also a period of the full nim-value sequence from its first term:
\[
  \G_S(x+p)=\G_S(x)\qquad(x\ge0).
\]
In particular the pre-period of $\G_S$ is $q=0$, and the least period of $\G_S$ divides $p$.
\end{corollary}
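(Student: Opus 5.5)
The plan is to read the whole value function off $\W_0$ through Theorem~\ref{thm:lfunction}, and then check that each ingredient of that description is invariant under $x\mapsto x+p$. By that theorem, $\G_S=F$, where $F=0$ on $\W_0$, $F=1$ on $\W_1=\W_0+a$, and on $V=\N\setminus(\W_0\cup\W_1)$ the value is $2$ or $3$ according to the parity of $L$. So it suffices to prove three facts: $\W_0$ is $p$-periodic from $0$ (this is the hypothesis), $\W_1$ and $V$ are $p$-periodic from $0$, and $L(y+p)=L(y)$ for every $y\in V$.

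For $\W_1$, I would repeat the argument of Lemma~\ref{lem:Vstructure}\,(0), which uses only $p$-periodicity of $\W_0$ and $[0,a)\subseteq\W_0$.
\begin{itemize}
\item If $x\ge a$, then $x\in\W_1\iff x-a\in\W_0\iff x-a+p\in\W_0\iff x+p\in\W_1$.
\item If $x<a$, then $x\in\W_0$, so $x\notin\W_1$. Also $x+p\in\W_0$ by periodicity, so $x+p\notin\W_1$.
\end{itemize}
The complement $V$ is then periodic as well.

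For $L$, I would use the descent formula $L(v)=(v-\Pi_a(v))/a-2$ established before Lemma~\ref{lem:Vstructure}. This formula is valid for every game, since the descent argument only uses Ferguson's pairing. Alternatively, I can argue directly from Definition~\ref{def:L}: the first $t$ with $v-at\notin V$ must satisfy $v-at\ge0$, because $v-a\lfloor v/a\rfloor\in[0,a)\subseteq\W_0$. Hence every chain from $v$ exits $V$ at a nonnegative point. For $v+p$, the chain points $v+p-at$ with $v-at\ge0$ lie in $V$ exactly when $v-at$ does, by periodicity of $V$. So the first exit of the chain from $v+p$ occurs at the same $t$, and $L(v+p)=L(v)$.

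Combining these gives $F(x+p)=F(x)$ for all $x\ge0$, hence $\G_S(x+p)=\G_S(x)$. Therefore the pre-period is $0$, and the least period divides $p$ by the standard fact that every period of a purely periodic sequence is a multiple of its least period.

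The only delicate point is the scope of the inputs. In the non-additive case, periodicity of $\W_0$ with this $p$ is given by Theorem~\ref{thm:verify}; for the inert case $p=a+b$, Proposition~\ref{prop:base} applies. In the additive case it is given by Theorem~\ref{thm:additive}. I must also make sure not to invoke Theorem~\ref{thm:lfunction} in a circular way, since Theorem~\ref{thm:additive} cites this corollary. In the additive case I would therefore rely on the value formula of Appendix~\ref{app:additive}, namely $\G_S=\min\{i: x\in\W_0+o_i \bmod p\}$. That formula is manifestly $p$-periodic once $\W_0$ is, and it yields the corollary directly.
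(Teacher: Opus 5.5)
Your proposal is correct and matches the paper's proof essentially step for step. You reduce via Theorem~\ref{thm:lfunction} to $p$-invariance of $\W_0$, $\W_1$ and $L$, treat $\W_1$ by the split $x\ge a$ versus $x<a$, and get $L(v+p)=L(v)$ because the backward $a$-chain from $v$ leaves $V$ at a nonnegative point. Your additive-case detour through Appendix~\ref{app:additive} is harmless but not needed: the additive case of Theorem~\ref{thm:lfunction} uses only the $\W_0$-formula and collision counts of Theorem~\ref{thm:additive}, not its nim-value periodicity conclusion, so citing Theorem~\ref{thm:lfunction} directly, as the paper does, is not circular.
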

\begin{proof}
By Theorem~\ref{thm:lfunction}, $\G_S$ is determined pointwise by $\W_0$: value $0$ on
$\W_0$, value $1$ on $\W_1=\W_0+a$, and on $V=\N\setminus(\W_0\cup\W_1)$ the value $2$ or
$3$ according to the parity of $L$. It therefore suffices to show that $\W_0$, $\W_1$, and
the function $L$ on $V$ are invariant under $x\mapsto x+p$.

For $\W_0$ this is the hypothesis. For $\W_1$: if $x\ge a$ then
$x\in\W_1\iff x-a\in\W_0\iff x-a+p\in\W_0\iff x+p\in\W_1$; if $x<a$ then
$x\in[0,a)\subseteq\W_0$, so $x\notin\W_1$ and $x+p\in\W_0$ excludes $x+p\in\W_1$
\textup{(}$\W_0\cap\W_1=\varnothing$ by Lemma~\ref{lem:ferguson}\textup{)}. Hence $V$ is
$p$-periodic from $0$ as well.

For $L$: let $v\in V$ and let $t_0=L(v)+1$ be the least $t$ with $v-at\notin V$. Every
backward $a$-chain terminates at a non-negative point: descending from $v$ in steps of $a$
one reaches the point $v-a\lfloor v/a\rfloor\in[0,a)\subseteq\W_0$, which is not in $V$;
hence $v-at_0\ge0$. By the $p$-periodicity of $V$ on $\N$, for every $t\le t_0$ we have
$v+p-at\in V\iff v-at\in V$, so the chain from $v+p$ terminates at the same depth:
$L(v+p)=L(v)$. Consequently $\G_S(x+p)=\G_S(x)$ for every $x\ge0$, i.e.\ $q=0$; and the
least period of a purely periodic sequence divides every period.
\end{proof}

\begin{corollary}[Seam localization]\label{cor:seam}
Let $S$ be non-additive and covered, with least period $p$. Every position $x$ with
$x\bmod p<c$ satisfies $\G_S(x)=\G_B(x\bmod p)\le2$; consequently the value $3$ occurs
only at positions whose residue lies in $[c,p)$, the seam \textup{(}and not at all in the
inert case, where $p<c$\textup{)}.
\end{corollary}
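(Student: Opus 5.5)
The plan is to read the statement off the reconstruction in Theorem~\ref{thm:lfunction} together with the periodicity from Corollary~\ref{cor:grundy-period}. By that corollary, $\G_S(x)=\G_S(x\bmod p)$ for every $x\ge0$, so it suffices to treat a residue $u:=x\bmod p<c$. Below $c$ the move $c$ is unavailable, so a position $u<c$ has exactly the same options in $S$ as in $B$. Induction on $u$ over $[0,c)$, or simply the recursive definition of the mex, then gives $\G_S(u)=\G_B(u)$. Since $B$ is a two-move game, $\G_B\le2$, and hence $\G_S(x)=\G_B(x\bmod p)\le2$.

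One point needs care: the least period $p$ in the statement must coincide with the period used in Corollary~\ref{cor:grundy-period}. If the least period of $\G_S$ were a proper divisor of the candidate $P$, the conclusion would still hold, because the reduction $x\mapsto x\bmod p$ with a divisor $p\mid P$ lands in a smaller window. To stay safe, I would note that for non-additive covered games the least period is $P$, by Theorem~\ref{thm:main} (via Proposition~\ref{prop:minimality}). Even without invoking minimality, I would argue as follows. Any period $p$ of the purely periodic sequence $\G_S$ gives $\G_S(x)=\G_S(x\bmod p)$, and $\G_S=\G_B$ on $[0,c)$ holds independently of $p$. So the claim holds for whatever the least period is. This is the cleanest route, and it avoids the minimality results altogether.

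For the consequence, value $3$ can therefore only appear at positions with $x\bmod p\in[c,p)$. In the inert case, Proposition~\ref{prop:inert-iff} already gives $\G_S=\G_B\le2$ everywhere, which directly shows that value $3$ never occurs. Alternatively, as in the proof of Lemma~\ref{lem:runstart-na}, one has $p=a+b<c$, so every residue lies below $c$ and the seam is empty. I expect no real obstacle. The only subtlety is the order of quantifiers, namely using pure periodicity from $0$ (that is, $q=0$) so that reduction modulo $p$ is valid at all positions, including those below $p$.
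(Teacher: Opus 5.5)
Your proposal is correct and follows the paper's own proof. You reduce to the residue $x\bmod p$ using the pure periodicity from Corollary~\ref{cor:grundy-period}, then observe that the move $c$ is unavailable below $c$, so $\G_S=\G_B\le2$ there. Your two side remarks are also sound: any period of the sequence from $0$ would suffice, so minimality is not needed, and $p=a+b<c$ holds in the inert case.
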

\begin{proof}
Let $y=x\bmod p<c$. By Corollary~\ref{cor:grundy-period}, $\G_S(x)=\G_S(y)$, and below
$c$ the move $c$ is unavailable, so $\G_S(y)=\G_B(y)\le2$.
\end{proof}

\begin{remark}\label{rem:deferred}
Theorem~\ref{thm:main} uses Corollary~\ref{cor:grundy-period},
Lemma~\ref{lem:sameperiod} and Proposition~\ref{prop:minimality}.
These arguments do not assume the nim-value conclusion of
Theorem~\ref{thm:main}. In the non-additive
case, the $\W_0$-description in \S\ref{sec:rotation}, via \textup{(H)} and
Theorem~\ref{thm:lfunction}, gives pure periodicity in
Corollary~\ref{cor:grundy-period}. Lemma~\ref{lem:sameperiod} then identifies
the least periods, and Proposition~\ref{prop:minimality} proves minimality.
For Theorem~\ref{thm:additive}, the $\W_0$-formula and collision counts are
established first; Appendix~\ref{app:additive} reconstructs the nim-values
from them. Corollary~\ref{cor:grundy-period} then transfers periodicity to
the nim-sequence, whose least period equals that of its zero set.
\end{remark}

\begin{corollary}\label{cor:add-value}
For additive $S=(a,b,a+b)$ with $\gcd(a,b)=1$, period $p$ and $\delta=b-a$, one has modulo $p$
\[
  \W_1=\W_0+a,\qquad \W_2=\W_0-b,\qquad \W_3=(\W_0-\delta)\setminus\W_0,
\]
and equivalently $\G_S(x)=\min\{i:x\in\W_0+o_i\}$ with
$(o_0,o_1,o_2,o_3)=(0,a,-b,-\delta)$, membership again read modulo $p$.
\end{corollary}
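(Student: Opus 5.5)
The plan is to read Corollary~\ref{cor:add-value} off Theorem~\ref{thm:lfunction} together with the closed-form value function of Appendix~\ref{app:additive}, as recalled in the additive part of that proof. First, $\W_1=\W_0+a$ is Lemma~\ref{lem:ferguson}. Theorem~\ref{thm:additive} and Corollary~\ref{cor:grundy-period} make $\W_0$, $\W_1$ and $\G_S$ purely $p$-periodic. Hence every identity can be stated on residues modulo $p$. In particular, $\W_0+a$ read modulo $p$ agrees with $\W_1$, including for $x<a$, exactly as in the proof of Corollary~\ref{cor:grundy-period}.

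Next I would take the appendix formula $\G_S(x)=\min\{i:x\in\W_0+o_i\ (\mathrm{mod}\ p)\}$ with $(o_0,o_1,o_2,o_3)=(0,a,-b,-\delta)$ as given. The sets $\W_0$, $\W_0+a$, $\W_0-b$ must be pairwise disjoint modulo $p$. Disjointness of $\W_0$ and $\W_0+a$, and of $\W_0$ and $\W_0-b$, holds because no two $\P$-positions differ by $a$ or $b$ modulo $p$; this is the collision check in the proof of Theorem~\ref{thm:additive}. For $(\W_0+a)\cap(\W_0-b)$, a common residue would give two $\P$-classes differing by $a+b=c$ modulo $p$, which that check also excludes. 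With these disjointnesses, the min-formula gives $\W_2=\W_0-b$ exactly. It also gives $\W_3=(\W_0-\delta)\setminus(\W_0\cup\W_1\cup\W_2)$.

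The remaining step, and the only nontrivial one, is to show $(\W_0-\delta)\cap(\W_1\cup\W_2)=\varnothing$, so that $\W_3=(\W_0-\delta)\setminus\W_0$. Suppose $x\in\W_0-\delta$ and $x\in\W_0+a$. Then two $\P$-classes differ by $\delta+a=b$, which is impossible. Suppose instead $x\in\W_0-\delta$ and $x\in\W_0-b$. Then two $\P$-classes differ by $b-\delta=a$, which is also impossible. Both cases reduce to the no-collision property modulo $p$, so the step closes.

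Finally, the four shifted sets together cover all residues, since $\G_S\le3$. The equivalent min-formula is then just the restatement of the set identities. The main obstacle is keeping every collision statement at the level of residues modulo $p$ rather than literal differences. For this I would cite the join argument in the proof of Theorem~\ref{thm:additive}: there every complementary difference $p-s$ exceeds the diameter of $W_{\mathrm a}$.
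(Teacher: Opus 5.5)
Your proposal is correct and follows essentially the paper's route. The paper's proof simply cites the additive case of Theorem~\ref{thm:lfunction}, that is, the appendix identification $F=\G_S$, and there the priority regions $R_2=\W_0-b$ and $R_3=(\W_0-\delta)\setminus\W_0$ come from exactly your five forbidden gaps ($a,-b,-c,-b,a$ between the shifts, excluded modulo $p$ because $\W_0\cap[0,p)\subseteq[0,p-c)$).

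One nuance: covering of the residues is already part of the formula you take as given; the appendix proves it by the count $N_\delta=4|\W_0\cap[0,p)|-p$. Your appeal to $\G_S\le3$ is therefore needed only for the converse direction of ``equivalently''.
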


Explicit closed forms for $\W_2$ and $\W_3$ follow by substituting the closed form of
$\W_0$ from Theorem~\ref{thm:additive}.
\begin{proof}
This is the additive case established in the proof of Theorem~\ref{thm:lfunction}, where the
value-$2$ and value-$3$ classes were identified as $R_2=\W_0-b$ and
$R_3=(\W_0-\delta)\setminus\W_0$.
\end{proof}

\begin{lemma}\label{lem:sameperiod}
Let $S$ be non-additive and satisfy \textup{(H)} \textup{(}stated before
Lemma~\ref{lem:Vstructure}\textup{)}, or additive. Then the least period of $\G_S$
equals the least period of $\W_0(S)$.
\end{lemma}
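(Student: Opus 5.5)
Both least periods exist because $\G_S$ and $\W_0(S)$ are purely periodic: under \textup{(H)} this is Corollary~\ref{cor:grundy-period}, and in the additive case it is Theorem~\ref{thm:additive}. Write $p_0$ for the least period of $\W_0$ and $p_G$ for that of $\G_S$. The plan is to prove the two divisibilities $p_0\mid p_G$ and $p_G\mid p_0$. The first is immediate. Any period $t$ of $\G_S$ from $0$ satisfies $\G_S(x+t)=0\iff\G_S(x)=0$, so it is a period of $\W_0$. Since both sequences are purely periodic, the least period divides every period, and hence $p_0\mid p_G$.

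For the reverse divisibility, the plan is to rerun the argument of Corollary~\ref{cor:grundy-period} with $p$ replaced by an arbitrary period $t$ of $\W_0$ from $0$, in particular $t=p_0$. That argument used only three ingredients: the $p$-periodicity of $\W_0$; the inclusion $[0,a)\subseteq\W_0$, which handles $\W_1=\W_0+a$ near the origin; and the fact that every backward $a$-chain in $V$ terminates at a nonnegative point. None of these depends on $p$ being the candidate period. Given $t$-periodicity of $\W_0$ from $0$, the same steps therefore show that $\W_1$ and $V$ are $t$-periodic from $0$ and that $L(v+t)=L(v)$ for $v\in V$.

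Next I would apply Theorem~\ref{thm:lfunction}, which holds for every covered game and in particular under \textup{(H)} or in the additive case. It says that $\G_S$ is the function $F$ determined pointwise by $\W_0$, $\W_1$ and the parity of $L$ on $V$. Each of these is $t$-invariant, so $\G_S(x+t)=\G_S(x)$ for all $x\ge0$. Hence $p_0$ is a period of $\G_S$, which gives $p_G\mid p_0$, and so $p_G=p_0$.

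The only delicate point I expect is making sure that nothing in Theorem~\ref{thm:lfunction} or in the transfer argument secretly used the specific value of $p$, so that the pointwise identity $\G_S=F$ is independent of which period is chosen. Theorem~\ref{thm:lfunction} is stated unconditionally for covered $S$, and $F$ is defined intrinsically from $\W_0$ with no reference to a period. The period enters only through the invariance check, and that check is verbatim for any period of $\W_0$. I would therefore state this reuse of Corollary~\ref{cor:grundy-period} explicitly rather than reprove it.
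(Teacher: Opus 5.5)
Your proposal is correct and follows essentially the same route as the paper. Since $\W_0$ is a level set of $\G_S$, you get $p_0\mid p_G$, and rerunning the proof of Corollary~\ref{cor:grundy-period} with $p_0$ in place of the candidate period, through the pointwise reconstruction of Theorem~\ref{thm:lfunction}, gives $p_G\mid p_0$. The paper also spells out the gcd argument showing that a least period divides every period, which you take as standard. Here even that is unnecessary: the inequalities $p_G\le p_0\le p_G$ already give equality.
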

\begin{proof}
The reconstruction of Theorem~\ref{thm:lfunction} expresses $\G_S$ as a function of $\W_0$
and the least move alone, and that function commutes with translation of $\N$; so a period
of $\W_0$ is a period of $\G_S$, while conversely $\W_0$ is a level set of $\G_S$. We spell
this out, using twice the fact that for a sequence purely periodic from $0$ the least
period divides every period: if $s<t$ are periods, write $\gcd(s,t)=mt-ns$ with $m,n\ge0$;
then $f(x+\gcd(s,t))=f(x+\gcd(s,t)+ns)=f(x+mt)=f(x)$, every shift being forward, so
$\gcd(s,t)$ is a period, and minimality forces divisibility. Let $p_0$ be the least period
of $\W_0$ and $p^{*}$ that of $\G_S$. The proof of
Corollary~\ref{cor:grundy-period} uses only that $\W_0$ is periodic from $0$ with the period
in question, so it applies with $p_0$: the nim-sequence is $p_0$-periodic from $0$, whence
$p^{*}\mid p_0$. Conversely $\W_0$ is a level set of $\G_S$, so every period of $\G_S$ is a
period of $\W_0$ and $p_0\mid p^{*}$. Hence $p^{*}=p_0$. Neither step uses
Proposition~\ref{prop:minimality}.
\end{proof}

\subsection*{Minimality of the period}

We now show that the periods returned by the theorems are least. By
Lemma~\ref{lem:sameperiod} this is a statement about $\W_0$ alone. The mechanism is a
quotient construction: a proper period would force the explicit cyclic $\W_0$-pattern
to be a power of a shorter word, which would itself have to solve the three-move
recurrence, and the known pattern forbids this. The gap-window set $\Omega$, the
modulus $\mu$ and the two lemmas quoted below are stated and proved in
Appendix~\ref{app:intervals}.

\begin{prop}[Minimality]\label{prop:minimality}
Let $S$ be covered. Put $p=p_{\mathrm{add}}$ in the additive case; in the
non-additive case let $P$ be the least admissible candidate and put $p=P$. Then $p$ is
the least period of $\W_0(S)$ and of $\G_S$: the nim-value
sequence and the outcome sequence have the same least period $p$.
\end{prop}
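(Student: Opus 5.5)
By Lemma~\ref{lem:sameperiod} it suffices to show that $p$ is the least period of $\W_0=\W_0(S)$. We already know $p$ is a period from $0$ (Theorems~\ref{thm:verify} and~\ref{thm:additive}). Let $p_0$ be the least period; then $p_0\mid p$. The plan is to suppose $p_0<p$ and derive a contradiction from the explicit block $H=\W_0\cap[0,p)$.

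\emph{Additive and inert cases.} The additive case is already settled inside the proof of Theorem~\ref{thm:additive}: the gap sequence has least period $Q$, so the least spatial period is $p_{\mathrm{add}}$. In the inert case $\W_0=\W_0(B)$, and the argument in the proof of Theorem~\ref{thm:main} applies. The least period of $\W_0(B)$ is $a+b$ unless $b$ is an odd multiple of $a$, and in that exceptional case $\rho\in\mathcal I\subseteq a\Z$ would force $a\mid c$, which is excluded by primitivity.

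\emph{Sieve cases $P\in\{c+a,c+b\}$.} Here I would use the structure of $H$ from Proposition~\ref{prop:sieveform}. The block is the base pattern on $[0,c)$ followed by a seam $[c,P)$. Its first stretch $[c,c+a)$ contains no $\P$-position, while $[0,a)\subseteq\W_0$.

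The key observation is that a period $p_0\mid P$ carries the empty window $[c,c+a)$ to empty windows, and carries $[0,a)$ to full windows $[kp_0,kp_0+a)$ for every $k$. I would compare this with the base pattern on $[0,c)$. On that range, every maximal run of $\P$-positions has length at most $a$, and full length-$a$ runs occur only at residues $2ja$ modulo $a+b$. The periodicity $\W_0\cap[0,c)=W\cap[0,c)$ together with $p_0<P$ would then force $p_0$ to be a period of $W$ on a long enough interval, up to the seam.

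I would split by whether $p_0<c$.
\begin{itemize}
\item If $p_0<c$, the period $p_0$ forces $[c,c+a)$ to agree with $[c-p_0,c-p_0+a)$, which lies inside the base region. Using $\Omega$ and the gap-window lemma of Appendix~\ref{app:intervals}, I would show that every base window of length $a$ sitting at the appropriate residue contains a $\P$-position. This requires some care, because $W$ has gaps of length exactly $a$ (the blocks $\mathcal Z+a$). The remaining case is that $c-p_0$ falls at the start of a gap block. Then $p_0$ must be $\equiv \rho-a$ or similar modulo $a+b$ and must also be a period of $W$ on $[0,c)$, which forces $a+b\mid p_0$ when $K\ge1$. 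Combined with $p_0\mid P$, this gives $a+b\mid P$, so $a+b\mid c+a$ or $a+b\mid c+b$, that is, $\rho\in\{b,a\}$. Both of these angles are inert and would have made $a+b$ admissible. This contradicts the choice of $P$ as the least admissible candidate, which excluded $\Th_{a+b}$.
\item If $c\le p_0<P$, then $p_0\le P/2<c$ unless $p_0=P$, so this case is vacuous.
\end{itemize}

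The main obstacle is the case $K=0$ (that is, $b<c<a+b$ is impossible, so $K\ge1$ always holds, but $c$ may be only slightly above $a+b$). In that case the base region $[0,c)$ may not contain a full base period beyond the shift, and the conclusion $a+b\mid p_0$ needs the least-period information of Remark~\ref{rem:base-period} together with the windows $\Omega$ and modulus $\mu$ of Appendix~\ref{app:intervals}. I expect to invoke those two appendix lemmas precisely there. Failing that, I would check the finitely many residue configurations directly against the explicit forms of $\Th_{c+a}$ and $\Th_{c+b}$ (Theorems~\ref{thm:arcs} and~\ref{thm:thetacb}) to confirm that the smaller candidate would have been admissible. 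In that case $c+a<c+b$ and, when $P=c+b$, $\rho\notin\Th_{c+a}$.
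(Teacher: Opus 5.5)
Your reduction to $\W_0$ via Lemma~\ref{lem:sameperiod}, the additive and inert cases, the bound $p_0\le P/2<c$, and the final step (that $a+b\mid P$ forces $\rho\in\{a,b\}\subseteq\Th_{a+b}$) all agree with the paper. The gap is the phase case. The decisive claim, that no proper divisor $p_0$ of $P$ is a period, is only announced (``I would show'', ``I expect to invoke''), and the concrete assertions you make toward it do not hold.

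First, $K=0$ does occur. The game $\{3,4,5\}$ has $\rho=5\in\Th_{c+a}\setminus\Th_{a+b}$ with $P=8$, and $\{3,8,10\}$ has $\rho=10\in\Th_{c+b}\setminus(\Th_{a+b}\cup\Th_{c+a})$ with $P=18$. Both have $c<a+b$, so an argument ``when $K\ge1$'' leaves cases uncovered.

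Second, inferring $a+b\mid p_0$ from $p_0$ being a period of $W$ on $[0,c)$ needs a long literal overlap. But $p_0$ may be as large as $P/2$, so $c$ can fall far below the Fine--Wilf length $p_0+(a+b)-\gcd(p_0,a+b)$.

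Third, the target is wrong in the odd harmonic shape $b=ka$. There $W$ itself has period $2a$, so periodicity of $W$ yields at best $2a\mid p_0$. The contradiction must then come from primitivity: $2a\mid c+b$ gives $c\equiv a\pmod{2a}$, hence $a\mid c$. Your sieve-case argument never invokes primitivity, yet this shape carries every primitive angle for $P=c+b$.

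Finally, the fallback is not a proof. Checking ``finitely many residue configurations'' is not finite, since $a,b$ are arbitrary. Admissibility of a smaller candidate is also beside the point, because $p_0$ is a divisor of $P$, not a candidate.

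The missing idea is the cyclic quotient argument of Lemmas~\ref{lem:quotient} and~\ref{lem:prefix}. Since $p_0<c$, the word $v=u|_{[0,p_0)}$ is a prefix of the base pattern. Evaluating the actual recurrence at representatives in $[c,c+P)$, where all three moves are legal, shows that $v$ satisfies the three-move recurrence cyclically modulo $p_0$. Hence its support $Y$ satisfies $(Y+a)\cap Y=(Y+b)\cap Y=\emptyset$ in $\Z/p_0\Z$.

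Prefix rigidity turns these two conditions into $\mu\mid p_0$. It uses wrap-around modulo $p_0$ rather than literal overlaps, so no lower bound on $c$ is needed. The $a$-condition forces the last length-$a$ window of $[0,p_0)$ to miss $\mathcal Z$. The $b$-condition forces $(p_0+a)\bmod(a+b)\in\mathcal I$ when $p_0>b$. A case analysis over the shapes of Lemma~\ref{lem:windows} then finishes.

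From $\mu\mid p_0\mid P$, the case $\mu=a+b$ gives $\rho\in\{a,b\}$, which is inert, and $\mu=2a$ gives $a\mid c$. Your empty-window observation supplies just one constraint of this kind. For $P=c+a$, where $c\equiv-a\pmod{p_0}$, it is exactly the first claim in the proof of Lemma~\ref{lem:prefix}. The second disjointness condition and the ensuing case analysis are absent from your sketch.
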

\begin{proof}
\emph{Additive case.} The least period of $\W_0$ is $p_{\mathrm{add}}$ by
the gap argument in the proof of Theorem~\ref{thm:additive}.

\emph{Inert case $\rho\in\Th_{a+b}$, $p=a+b$.} Here $\W_0(S)=\W_0(B)$, whose least period
by \cite[Theorem 3.4]{MiklosPost} is $a+b$ unless $b$ is an odd multiple of $a$; the
harmonic exception is excluded exactly as in the proof of Theorem~\ref{thm:main}
\textup{(}there $\mathcal I\subseteq a\Z$, so $a\mid c$\textup{)}.

\emph{Phase case $\rho\notin\Th_{a+b}$, $p\in\{c+a,c+b\}$.} Suppose the least period
$p_0$ of $\W_0(S)$ satisfies $p_0<p$, and write $u=v^{M}$, $M\ge2$, as in
Lemma~\ref{lem:quotient}(a). The cyclic word $v$ solves the three-move recurrence modulo
$p_0$ \textup{(}Lemma~\ref{lem:quotient}(b)\textup{)}; in particular its support
$Y=\{x\in[0,p_0):v(x)=1\}$ satisfies $(Y+a)\cap Y=(Y+b)\cap Y=\emptyset$ in $\Z/p_0\Z$:
if $y$ and $y+a$ were both in $Y$, the recurrence at $y+a$ would force $v(y)=0$. By
Lemma~\ref{lem:quotient}(c), $Y=\{x\in[0,p_0):x\bmod(a+b)\in\mathcal Z\}$, so
Lemma~\ref{lem:prefix} applies and yields $\mu\mid p_0$; with $p_0\mid p$ this gives
$\mu\mid p$.

If $\mu=a+b$, then $c+a$ is a multiple of $a+b$ only for $\rho=b$, and $c+b$ only for $\rho=a$; both angles
lie in $\mathcal I=\Th_{a+b}$ \textup{(}Remark~\ref{rem:Zfacts}\textup{)}, contradicting
$\rho\notin\Th_{a+b}$. If $\mu=2a$ \textup{(}$b=ka$, $k$ odd\textup{)}: $2a\mid c+a$ and
$2a\mid c+ka$ each give $c\equiv a\pmod{2a}$, hence $a\mid c$, contradicting
$\gcd(a,b,c)=1$ with $a\ge2$. In every case $p_0=p$, and Lemma~\ref{lem:sameperiod}
transfers this to $\G_S$.
\end{proof}

\begin{corollary}\label{cor:zones-disjoint}
Let $S$ be a primitive non-additive ruleset with $a\ge2$, and put $\rho=c\bmod(a+b)$.
If $\rho\notin\Th_{a+b}$, then $\rho$ belongs to at most one of $\Th_{c+a}$ and
$\Th_{c+b}$. Hence for such $S$ a non-inert admissible angle is admissible for
exactly one of the periods $c+a$, $c+b$,
and an admissible angle determines its least period unambiguously.
\end{corollary}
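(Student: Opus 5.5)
The quickest route is through minimality rather than through the arc tables. Suppose $\rho\notin\Th_{a+b}$ and $\rho\in\Th_{c+a}\cap\Th_{c+b}$. Theorem~\ref{thm:verify} then applies to both candidates, giving $\W_0(S)=T_{c+a}$ and $\W_0(S)=T_{c+b}$. Hypothesis (H) holds with either choice of $P$. So Corollary~\ref{cor:grundy-period}, Lemma~\ref{lem:sameperiod} and the phase-case argument of Proposition~\ref{prop:minimality} can be run with each candidate. That argument only used that $\W_0(S)$ has $P$ as a period and that $\rho\notin\Th_{a+b}$, not that $P$ is the smaller admissible candidate. It would then show that $c+a$ and $c+b$ are both the least period of $\W_0(S)$. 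Since $c+a\neq c+b$, this is a contradiction.

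\textbf{Main step.} Check that the phase-case minimality argument really works for an arbitrary admissible $P\in\{c+a,c+b\}$. Suppose the least period $p_0$ satisfied $p_0<P$. Then Lemma~\ref{lem:quotient} and Lemma~\ref{lem:prefix} give $\mu\mid p_0\mid P$. The two divisibility cases are then ruled out:
\begin{itemize}
\item If $\mu=a+b$, the angle is $\rho=b$ or $\rho=a$, both of which lie in $\mathcal I=\Th_{a+b}$.
\item If $\mu=2a$, then $a\mid c$, which contradicts primitivity together with $a\ge2$.
\end{itemize}
None of this uses leastness of $P$ among the admissible candidates. The hypotheses of Lemma~\ref{lem:quotient} only need $\W_0(S)=T_P$ with $P$ a period. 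Verifying this is where I expect the only real care is needed: I would reread the appendix lemmas to confirm they are stated for any $P$ with (H).

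\textbf{Alternative.} If that dependence proves delicate, a direct contradiction is available. The block below $c$ is the same in both descriptions, while $T_{c+a}$ has no $\P$-position in $[c,c+a)$. Comparing $T_{c+a}$ and $T_{c+b}$ on $[c+a,c+b+a)$ gives the following:
\begin{itemize}
\item $T_{c+a}$ restarts at $c+a$ with $[0,a)$ shifted, so $c+a\in\W_0$.
\item In $T_{c+b}$, position $c+a$ corresponds to $u=a\in D$. It is in $\W_0$ only if $\rho+a\in\mathcal Z$.
\item Conversely, $c+b\in T_{c+b}$, while in $T_{c+a}$ it has residue $b-a=\delta$.
\end{itemize}
Equating membership of the positions $c+a+r$ for $r\in[0,a)$ forces $(\rho+a+r)\bmod(a+b)\in\mathcal Z$ for all $r$, i.e.\ $\rho+a\equiv$ the start of a full block of $\mathcal Z$. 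Comparing at $c+b+r$ then forces $\delta+r\in\mathcal Z$, or a related constraint. Combined with the explicit admissible sets of Theorems~\ref{thm:arcs} and~\ref{thm:thetacb}, these should land $\rho$ in $\mathcal I$. I would use this only as a fallback.

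The final sentences of the corollary then follow at once. The least period is the one candidate for which $\rho$ is admissible, so the angle determines it.
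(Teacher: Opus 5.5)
Your main argument is correct and is essentially the paper's: both rest on Theorem~\ref{thm:verify} for the two candidates together with the phase-case minimality argument of Proposition~\ref{prop:minimality}. As you suspected, that argument (and the proof of Lemma~\ref{lem:quotient}) uses only that $P\in\{c+a,c+b\}$ is a period of $\W_0(S)$ from $0$, that $P\le c+b$, and that $\rho\notin\Th_{a+b}$. The paper avoids re-checking this for the non-least candidate: it applies Proposition~\ref{prop:minimality} only to $c+a$ and then uses that the least period divides the period $c+b$, so $c+a\mid\delta$ with $0<\delta<c+a$, a contradiction. Your fallback comparison is not needed.
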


\begin{proof}
Suppose $\rho\notin\Th_{a+b}$ is admissible for both $c+a$ and $c+b$. The least
admissible candidate is $c+a$, so Proposition~\ref{prop:minimality} makes $c+a$ the
least period of $\W_0(S)$; but Theorem~\ref{thm:verify} applied with $P=c+b$ shows
that $c+b$ is also a period, so $c+a$ divides $c+b$ and hence divides
$(c+b)-(c+a)=\delta$, contradicting $0<\delta<c+a$.
\end{proof}

\begin{remark}\label{rem:no-collapse}
The case $\mu=2a$ of the proof says, concretely, that on the harmonic rows $b=ka$ the
least period never falls to a proper divisor of $c+b$: such a divisor would force
$a\mid c$, which $\gcd(a,b,c)=1$ with $a\ge2$ forbids. For $a=1$ nothing is forbidden
and the collapse is real; see Remark~\ref{rem:a1-fails}.
\end{remark}

\begin{remark}
For general subtraction games the nim-value period can be an unbounded multiple of the
outcome period (Flammenkamp's Satz~22, \cite{Flammenkamp});
Proposition~\ref{prop:minimality} shows that for the covered three-move games this
ratio collapses to~$1$; that it extends to the whole purely periodic regime would
follow from Conjecture~\ref{conj:necessity}.
\end{remark}

\section{Necessity: the conjecture in rotation form}\label{sec:necessity}

Theorem~\ref{thm:main} gives pure periodicity and the least period whenever
an angle is admissible. We conjecture the converse. Recall that the
candidate periods are ordered as $a+b<c+a<c+b$.

\begin{conj}\label{conj:necessity}
Let $a\ge2$, $\gcd(a,b,c)=1$ and $c\neq a+b$. Then $\G_S$ is purely periodic if and only if
the angle is admissible for at least one candidate period, that is, if and only if
\[
  \rho\ \in\ \Th_{a+b}\cup\Th_{c+a}\cup\Th_{c+b} ,
\]
and in that case the least period is the least admissible one,
\[
  p=\min\big\{P\in\{a+b,\,c+a,\,c+b\}:\ \rho\in\Th_P\big\}.
\]
\end{conj}

Only necessity remains open; sufficiency and the least-period
formula follow from Theorem~\ref{thm:main}.
As a finite check, we examined all primitive non-additive triples
with $2\le a\le8$, $a<b\le40$, and $b<c\le100$.
None of the $7{,}828$ non-admissible cases was purely periodic.
All $7{,}557$ admissible cases were purely periodic, with least
nim-value period equal to the least admissible candidate.
The accompanying script detects the first repeated complete
nim-value state of length $c$; every case reached such a state.

In terms of Table~\ref{tab:closedforms}, the conjecture predicts
positive pre-period outside the non-additive rows. For $b\le2a$ the uncovered angles are
$0<\rho<\delta$; for $b>2a$ and $a\nmid b$ they are the complement of
$\{a,b,b-2a\}\cup C_{a,b}$. If $b=ka$ with $k\ge3$ odd, every primitive
angle is covered, whereas for even $k\ge4$ none is covered.
On harmonic shapes, primitivity requires $\gcd(a,\rho)=1$.
Below we separate the proved cases from two conjectures which together
imply Conjecture~\ref{conj:necessity} for $c\ge2(a+b)$.

\begin{remark}[the hypotheses are essential]\label{rem:gcd-essential}
The hypotheses $a\ge2$ and $\gcd(a,b,c)=1$ matter for the least-period
formula. The first is illustrated in Remark~\ref{rem:a1-fails}. For the
second, take $S=\{2,14,c\}$ with $c\equiv2\pmod4$. Then
$\rho\in\{2,6,10,14\}=\mathcal I$, so the angle criterion without the
primitive hypothesis would return least period $16$.
But $S=2\{1,7,c/2\}$, whose quotient has only odd moves and hence nim-value
$x\bmod2$. Lemma~\ref{lem:gcd} gives
$\G_S(x)=\lfloor x/2\rfloor\bmod2$, with least period $4$.
Thus both the least-period formula and the equality in
Conjecture~\ref{conj:period} can fail while pure periodicity survives.
Non-primitive games must first be reduced by Lemma~\ref{lem:gcd};
quotients with least move $1$ are handled by the known Grundy
classification in Remark~\ref{rem:a1}, whose purely periodic cases
are listed in Table~\ref{tab:a1-known}.
\end{remark}

\subsection*{The period \texorpdfstring{$a+b$}{a+b}}

\begin{lemma}\label{lem:smallperiod}
Suppose $\G_S$ is purely periodic with least period $p\le a+b$, and $c\ge2(a+b)$. Then
$\W_0(S)=\W_0(B)$ and $\rho\in\Th_{a+b}$.
\end{lemma}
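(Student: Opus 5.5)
The plan is to exploit the fact that below $c$ the game $S$ coincides with $B$, and that $c\ge2(a+b)$ leaves room for at least two full copies of any period $p\le a+b$ inside $[0,c)$. Since $\G_S$ is purely periodic with period $p$, the restriction of $\G_S$ to $[0,c)$, which equals $\G_B$ there, is $p$-periodic on $[0,c)$. Hence $\G_S(x)=\G_B(x\bmod p)$ for all $x$, where $x\bmod p<p\le a+b<c$. In particular $\W_0(S)=\{x:\ x\bmod p\in\W_0(B)\cap[0,p)\}$. So the first step is to identify $\W_0(S)$ with the $p$-periodic extension of the base zero set on $[0,p)$.

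The second step is to show that this periodic extension equals $\W_0(B)$ itself. Both sets are purely periodic: $\W_0(B)$ with period $a+b$ by Proposition~\ref{prop:base}, and $\W_0(S)$ with period $p$. They agree on $[0,c)\supseteq[0,2(a+b))$. Consequently $\W_0(B)\cap[0,c)$ has both periods $p$ and $a+b$ on an interval of length at least $p+(a+b)$. By the Fine--Wilf theorem, applied to the indicator word on $[0,c)$ (length $\ge p+(a+b)-\gcd(p,a+b)$), it also has period $g=\gcd(p,a+b)$ there. Since $g\mid a+b$, the $(a+b)$-periodicity of $\W_0(B)$ propagates period $g$ to all of $\N$. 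Likewise, since $g\mid p$, the agreement on the initial segment $[0,p)$, together with $p$-periodicity of $\W_0(S)$, gives $\W_0(S)=\W_0(B)$ on all of $\N$. A simpler route avoids Fine--Wilf: both sets are determined by their restriction to $[0,p)$ and to $[0,a+b)$ respectively, and $\W_0(S)$, being $p$-periodic and equal to $\W_0(B)$ on $[0,a+b+p)\subseteq[0,c)$, is also $(a+b)$-periodic on that range. Then an induction on blocks shows the two sets agree everywhere. I would use whichever is cleaner, probably the direct argument: $\W_0(S)(x)=\W_0(S)(x\bmod p)=\W_0(B)(x\bmod p)$, and one needs $\W_0(B)(x\bmod p)=\W_0(B)(x)$, which is the $p$-periodicity of $\W_0(B)$ that Fine--Wilf supplies.

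The third step is to conclude $\rho\in\Th_{a+b}$ directly from the ($\Rightarrow$) direction of Proposition~\ref{prop:inert-iff}, which needs no primitivity assumption: $\W_0(S)=\W_0(B)$ forces $\rho\in\Th_{a+b}$.

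The main obstacle is the middle step: transferring the $p$-periodicity seen on $[0,c)$ to a global statement about $\W_0(B)$ when $p$ need not divide $a+b$. The hypothesis $c\ge2(a+b)$ is exactly what makes the overlap long enough, since $p+(a+b)\le2(a+b)\le c$, for Fine--Wilf to apply. I would check carefully that the length bound is met with the indicator word rather than the full nim-value word, though either works because $\G_S=\G_B$ on $[0,c)$.
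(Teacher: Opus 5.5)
Your proof is correct and is essentially the paper's. You apply Fine--Wilf to the common indicator word of $S$ and $B$ on $[0,c)$ to get period $g=\gcd(p,a+b)$, propagate it through the $(a+b)$-periodicity of $\W_0(B)$ and compare on $[0,p)$, whereas the paper propagates it through the $p$-periodicity of $\W_0(S)$ and compares on $[0,a+b)$; both then use the forward direction of Proposition~\ref{prop:inert-iff}.

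Your Fine--Wilf-free variant is also sound and slightly more elementary. For $r=x\bmod p$, membership of $x+a+b$ in $\W_0(S)$ equals that of $r+a+b<p+a+b\le c$, hence that of $r+a+b$ in $\W_0(B)$, hence of $r$, hence of $x$. So $\W_0(S)$ is $(a+b)$-periodic and coincides with $\W_0(B)$, which shows the overlap bound $c\ge p+(a+b)$ alone suffices.
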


\begin{proof}
Below $c$ the games $S$ and $B$ have the same options, hence the same
$\P$-positions. Their common indicator word $u$ of length $c$ has periods
$p$ and $a+b$. Put $g=\gcd(p,a+b)$. Since
$c\ge2(a+b)\ge p+(a+b)-g$, the theorem of Fine and Wilf
\cite{FineWilf} gives period $g$ for $u$.
As $g\mid p$ and $c\ge p+g$, repeating the first $p$ letters shows that
$\W_0(S)$ is $g$-periodic from $0$, hence $(a+b)$-periodic.
It agrees with the $(a+b)$-periodic base set on $[0,a+b)$, so the two
sets coincide. Proposition~\ref{prop:inert-iff} then gives
$\rho\in\Th_{a+b}$.
\end{proof}

\begin{prop}\label{prop:necessity-inert}
Let $c\ge2(a+b)$. Then $\G_S$ is purely periodic with least period $p\le a+b$ if and only if
$\rho\in\Th_{a+b}$; in that case $\W_0(S)=\W_0(B)$ and $p$ is the least period of $\G_B$, a
divisor of $a+b$.
\end{prop}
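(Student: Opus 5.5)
The plan is to prove the two implications separately, each by combining Lemma~\ref{lem:smallperiod} with Proposition~\ref{prop:inert-iff}, and then to read off the least period from the base game.

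\emph{Forward direction.} Assume $\G_S$ is purely periodic with least period $p\le a+b$, and that $c\ge2(a+b)$. These are exactly the hypotheses of Lemma~\ref{lem:smallperiod}. That lemma gives $\W_0(S)=\W_0(B)$ and $\rho\in\Th_{a+b}$ at once.

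\emph{Reverse direction.} Assume $\rho\in\Th_{a+b}$. The proposition does not assume primitivity or $a\ge2$, so I use only Proposition~\ref{prop:inert-iff}, which needs neither. It gives $\W_0(S)=\W_0(B)$ and, more strongly, $\G_S=\G_B$ on all of $\N$. By Proposition~\ref{prop:base}, $\G_B$ is purely periodic with period $a+b$. Hence $\G_S$ is purely periodic, and its least period $p$ is the least period of $\G_B$.

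\emph{The least period.} It remains to show that $p$ divides $a+b$. For a sequence that is purely periodic from $0$, the least period divides every period; this is the $\gcd$ argument with forward shifts from the proof of Lemma~\ref{lem:sameperiod}. Applying it to $\G_B$ with the period $a+b$ gives $p\mid a+b$, so in particular $p\le a+b$. This completes the reverse implication. In the forward direction the same identification holds: since $\W_0(S)=\W_0(B)$, Proposition~\ref{prop:inert-iff} again gives $\G_S=\G_B$, so the least period there is also that of $\G_B$.

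\emph{Main obstacle.} The only delicate point is whether Lemma~\ref{lem:smallperiod} and Proposition~\ref{prop:inert-iff} rely on the standing assumptions $\gcd(a,b,c)=1$ and $a\ge2$. Proposition~\ref{prop:inert-iff} states explicitly that it does not. The Fine--Wilf argument in Lemma~\ref{lem:smallperiod} uses only $c\ge2(a+b)$. So I expect no extra hypotheses to be needed. I note that the least period may be a proper divisor of $a+b$, for example $2a$ in the odd-harmonic case, and the statement allows for this.
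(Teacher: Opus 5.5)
Your proof is correct and follows the paper's own argument: the forward implication is Lemma~\ref{lem:smallperiod}, and the converse is Proposition~\ref{prop:inert-iff}, which gives $\G_S=\G_B$. The one difference is that you get $p\mid a+b$ from the period $a+b$ of $\G_B$ (Proposition~\ref{prop:base}) together with the fact that a least period divides every period, whereas the paper cites Remark~\ref{rem:base-period}; since that remark is phrased for outcome periods, your justification is slightly more direct for the nim-sequence.
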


\begin{proof}
The forward implication is Lemma~\ref{lem:smallperiod}.
Conversely, $\rho\in\Th_{a+b}$ gives $\G_S=\G_B$ by
Proposition~\ref{prop:inert-iff}; its least period divides $a+b$
by Remark~\ref{rem:base-period}.
\end{proof}

Lemma~\ref{lem:idiff} explicitly lists these angles through
$\Th_{a+b}=\mathcal I$.

\subsection*{The periods \texorpdfstring{$c+a$ and $c+b$}{c+a and c+b}}

\begin{conj}[period restriction]\label{conj:period}
Let $S$ be non-additive with $a\ge2$ and $\gcd(a,b,c)=1$. If $\G_S$ is purely periodic then
its least period satisfies $p\in\{a+b,\,c+a,\,c+b\}$.
\end{conj}

This sharpens Ward's conjecture \cite{Ward} in the purely periodic
regime: Ward predicts that the least nim-value period of any
non-additive three-move game divides a pair sum and equals the gcd of
the pair sums it divides; here it is asserted to equal a pair sum.
At the outcome level, Flammenkamp \cite[Vermutung 6]{Flammenkamp}
conjectured pair-sum divisibility with gcd alternatives in 1997, and
\cite[Conjecture 1]{MiklosPost} predicts a least outcome period below
$2c$. The outcome period divides the nim-value period, so
Conjecture~\ref{conj:period} implies the outcome divisibility assertions
in this regime; for covered games the periods coincide
(Proposition~\ref{prop:minimality}). The general form of these
predictions originates in those works, Alth\"ofer and B\"ultermann
\cite{AB95}, and Flammenkamp's computational classification
\cite{Flammenkamp}. Lemma~\ref{lem:smallperiod} settles $p\le a+b$
when $c\ge2(a+b)$; the remaining cases are $p>a+b$ and the shorter
range $b<c<2(a+b)$.

For $b\le2a$, the proved least-period regions agree after normalization
with the integral specialization of \cite[Theorem 3]{Moriwaki};
that theorem also excludes least period $c+b$ when $b=2a$.
For $b\le2a$ and $0<\rho<\delta$, Moriwaki rules out pure periodicity with any of the
three candidate sums as period and conjectures that no period works.
Conjecture~\ref{conj:period} would therefore settle that region.
On $b=2a$ this is already known: for $0<\rho<a$,
\cite[Proposition 3.1]{Zhang} gives $q=c+a-\rho$ and the eventual
period. Together with Theorem~\ref{thm:main}, this proves
Conjecture~\ref{conj:necessity} on that line.

Further proved cases are the three ultimately bipartite families of
\cite[Theorem 6.3]{Zhang}, for odd $a\ge3$ and $t\ge1$:
\[
 \{a,a+2,(2a+2)t+1\},\qquad
 \{a,2a-1,(3a-1)t+a-2\},\qquad
 \{a,2a+1,(3a+1)t-1\}.
\]
Their eventual least nim-value period is $2$; since
$\G_S(0)=\G_S(1)=0$, they cannot be purely periodic.
They have angles $\rho=1$, $\rho=\delta-1$ and $\rho=a+b-1$,
respectively, and exhaust the corresponding angle classes with $c>b$.
For $b=a+2$ and odd $a$, the only uncovered angle is $\rho=1$,
so Conjecture~\ref{conj:necessity} holds for that entire subfamily.

\begin{conj}[admissible angle]\label{conj:residue}
Let $S$ be non-additive with $a\ge2$ and $\gcd(a,b,c)=1$, and suppose that
$\G_S$ is purely periodic with least period $p$. If $p=c+a$ then
$\rho\in\Th_{c+a}$, and if $p=c+b$ then $\rho\in\Th_{c+b}$.
\end{conj}

\begin{remark}\label{rem:MP-conj3}
Mikl\'os and Post \cite[Conjecture 3]{MiklosPost} predict an arithmetic
characterization of zero outcome pre-period and least outcome period
$b+c$, using successive divisions of $b$ by $a$ and of $c,c-a$ modulo
$a+b$ and then $2a$. Under the present primitive non-additive
hypotheses, Theorem~\ref{thm:main} and Proposition~\ref{prop:minimality}
give that least period on
$\Th_{c+b}\setminus(\Th_{a+b}\cup\Th_{c+a})
=\Th_{c+b}\setminus\Th_{a+b}$
(Corollary~\ref{cor:zones-disjoint}).
Equivalence of this region with their arithmetic conditions would give
the sufficiency half of their prediction, by
Theorems~\ref{thm:main} and~\ref{thm:thetacb}.
That equivalence remains unproved here.
\end{remark}

\begin{prop}\label{prop:reduction}
Conjectures~\ref{conj:period} and~\ref{conj:residue} together imply
Conjecture~\ref{conj:necessity} and the converse of Theorem~\ref{thm:main}, for all
$c\ge2(a+b)$.
\end{prop}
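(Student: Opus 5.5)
The proof is a case split on the least period, feeding each case into an earlier result. Fix a primitive non-additive $S$ with $a\ge2$ and $c\ge2(a+b)$. Conjecture~\ref{conj:necessity} has two directions. The ``if'' direction and the least-period formula are exactly Theorem~\ref{thm:main}, so only necessity needs an argument. The converse of Theorem~\ref{thm:main} is the same statement: pure periodicity forces $\rho\in\Th_{a+b}\cup\Th_{c+a}\cup\Th_{c+b}$. So assume $\G_S$ is purely periodic with least period $p$.

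By Conjecture~\ref{conj:period}, $p\in\{a+b,\,c+a,\,c+b\}$. Split into three cases.

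\emph{Case $p=a+b$.} Since $p\le a+b$ and $c\ge2(a+b)$, Lemma~\ref{lem:smallperiod} (equivalently Proposition~\ref{prop:necessity-inert}) gives $\rho\in\Th_{a+b}$. This is the only place the hypothesis $c\ge2(a+b)$ is used: it is needed for the Fine--Wilf step, which is why the implication is stated only in this range.

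\emph{Cases $p=c+a$ and $p=c+b$.} Conjecture~\ref{conj:residue} gives $\rho\in\Th_{c+a}$ or $\rho\in\Th_{c+b}$ respectively.

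In all three cases $\rho$ is admissible for some candidate, which proves necessity.

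It remains to check that the actual least period $p$ equals the least admissible candidate, as Conjecture~\ref{conj:necessity} asserts. Once admissibility is established, Theorem~\ref{thm:main} applies and says the least period of $\G_S$ is $\min\{P:\rho\in\Th_P\}$. The least period of a sequence is unique, so this minimum coincides with the $p$ we started from. No separate comparison of candidates is needed, because Theorem~\ref{thm:main} already contains the minimality statement (Proposition~\ref{prop:minimality}, together with Corollary~\ref{cor:zones-disjoint} for uniqueness of the phase type).

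The only step needing care is the bookkeeping of hypotheses. Conjectures~\ref{conj:period} and~\ref{conj:residue} require exactly primitivity, $a\ge2$ and non-additivity. Lemma~\ref{lem:smallperiod} additionally requires $c\ge2(a+b)$, which is assumed. Also, $c\ge2(a+b)>b$ ensures the hypothesis $c>b$ of Theorem~\ref{thm:main}. I do not expect any real obstacle; the argument is a direct assembly of these results.
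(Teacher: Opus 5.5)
Your proposal is correct and follows essentially the same route as the paper. The period conjecture reduces the least period to the three candidates. Proposition~\ref{prop:necessity-inert} (via Lemma~\ref{lem:smallperiod}, using $c\ge2(a+b)$) handles $p=a+b$, and the angle conjecture handles $c+a$ and $c+b$. Theorem~\ref{thm:main} then supplies the converse together with the least-period assertion.
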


\begin{proof}
Assume the two conjectures, and let $S$ satisfy the hypotheses of
Conjecture~\ref{conj:necessity}, with $c\ge2(a+b)$ and $q=0$.
The period conjecture gives $p\in\{a+b,c+a,c+b\}$.
For $p=a+b$, Proposition~\ref{prop:necessity-inert} gives
$\rho\in\Th_{a+b}$; for $p=c+a$ or $c+b$, the angle conjecture gives
membership in the corresponding admissible set. Thus the criterion
holds. Its converse, including the least-period assertion, is
Theorem~\ref{thm:main}.
\end{proof}

\appendix

\section{Interval computations}\label{app:intervals}

We compute the admissible angles from the base pattern $\mathcal Z$, then
prove the two lemmas used for minimality. Put
\[
  \mu:=\begin{cases}2a,&b=ka\text{ with }k\text{ odd},\\
  a+b,&\text{otherwise}.
  \end{cases}
\]
In the first case the base pattern repeats with step $2a$. Let
\[
  \Omega:=\{x\in[0,a+b):([x,x+a)\bmod(a+b))\cap\mathcal Z=\emptyset\}
\]
be the set of starts of the cyclic length-$a$ windows avoiding $\mathcal Z$.
By Lemma~\ref{lem:avoid}, it is the complement of $\mathcal Z-[0,a)$.

\begin{lemma}[Gap windows]\label{lem:windows}
The window set $\Omega$ is as follows:
\begin{enumerate}
\item[(i)] $\eta=0$ \textup{(}i.e.\ $b<2a$\textup{)}: $\Omega=[a,b]$;
\item[(ii)] $\varepsilon\ge1$, $\eta\ge2$ even:
  $\Omega=\{(2j-1)a:1\le j\le \eta/2\}\cup\big[(\eta+1)a,\,(\eta+1)a+\varepsilon\big]$;
\item[(iii)] $\varepsilon\ge1$, $\eta$ odd:
  $\Omega=\{(2j-1)a:1\le j\le (\eta+1)/2\}\cup\{(\eta+1)a+\varepsilon\}$;
\item[(iv)] $b=ka$, $k$ odd: $\Omega=\{(2j-1)a:1\le j\le (k+1)/2\}$;
\item[(v)] $b=ka$, $k$ even: $\Omega=\{(2j-1)a:1\le j\le k/2-1\}\cup\big[(k-1)a,\,ka\big]$.
\end{enumerate}
In particular $\min\Omega=a$, so $0\notin\Omega$.
\end{lemma}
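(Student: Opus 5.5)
We compute $\Omega$ as the complement of $\mathcal Z-[0,a)$ on $\Z/(a+b)\Z$, using Lemma~\ref{lem:avoid}(ii). By Proposition~\ref{prop:base}, $\mathcal Z$ is a union of explicit arcs. In the cases with $\varepsilon\ge1$ these are the full blocks $[2ja,(2j+1)a-1]$ for $0\le j\le\lfloor\eta/2\rfloor$, together with the partial block $[(\eta+1)a,(\eta+1)a+\varepsilon-1]$ when $\eta$ is odd. Subtracting the arc $[0,a-1]$ from a block $[y,y']$ gives the arc $[y-a+1,y']$. So $\mathcal Z-[0,a)$ is the union of $[(2j-1)a+1,(2j+1)a-1]$ over the full blocks, plus $[\eta a+1,(\eta+1)a+\varepsilon-1]$ for the partial block. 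The window set $\Omega$ is then obtained by listing the uncovered points of the circle between consecutive arcs, including the wraparound gap between the last arc and the arc coming from $j=0$. That first arc is $[-a+1,a-1]\equiv[b+1,a+b-1]\cup[0,a-1]$.

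\emph{Case (i), $\eta=0$.} Here $\mathcal Z=[0,a)$ and the single arc is $[b+1,\,a+b+a-1]$. Since its length $2a-1$ is at most $a+b$, it leaves exactly $[a,b]$ uncovered.

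\emph{Case (ii), $\eta\ge2$ even, $\varepsilon\ge1$.} Consecutive full-block arcs $[(2j-1)a+1,(2j+1)a-1]$ and $[(2j+1)a+1,\dots]$ leave only the single point $(2j+1)a$ uncovered, for $0\le j<\eta/2$. Reindexing gives the points $(2j-1)a$ with $1\le j\le\eta/2$. The last full block has $j=\eta/2$ and ends at $(\eta+1)a-1$. The wraparound arc starts at $b+1=(\eta+1)a+\varepsilon+1$, so the final gap is $[(\eta+1)a,(\eta+1)a+\varepsilon]$.

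\emph{Case (iii), $\eta$ odd, $\varepsilon\ge1$.} The full blocks are indexed by $j\le(\eta-1)/2$ and leave the points $(2j+1)a$ for $0\le j\le(\eta-1)/2$ uncovered. One more isolated point, $(\eta)a$, arises between the last full-block arc (ending at $\eta a-1$) and the partial-block arc (starting at $\eta a+1$). Together these are the points $(2j-1)a$ with $1\le j\le(\eta+1)/2$. The partial-block arc ends at $(\eta+1)a+\varepsilon-1$, and the wraparound arc begins at $b+1=(\eta+1)a+\varepsilon+1$. This leaves exactly $\{(\eta+1)a+\varepsilon\}$ uncovered.

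\emph{Cases (iv) and (v), $\varepsilon=0$.} These follow the same pattern.
\begin{itemize}
\item[(iv)] For $k$ odd, $\eta=k-1$ is even, so $\mathcal Z$ consists of full blocks only. The final gap shrinks to the single point $(\eta+1)a=ka=b$, which is the $j=(k+1)/2$ term of the list.
\item[(v)] For $k$ even, $\eta=k-1$ is odd, so the partial block is empty. The last full block has $j=(k-2)/2$ and its arc ends at $(k-1)a-1$; the wraparound arc starts at $ka+1$. This produces the isolated points $(2j-1)a$ for $j\le k/2-1$ and the final interval $[(k-1)a,ka]$.
\end{itemize}

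In every case the smallest uncovered point is $a$, since the arc from $j=0$ covers $[0,a-1]$ and the next arc begins at $a+1$ or later. Hence $\min\Omega=a$.

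The only real obstacle is bookkeeping at the boundaries. We must check that every arc has length $2a-1\le a+b$ (true since $b\ge a+1$), so Lemma~\ref{lem:avoid}(ii) gives arcs rather than the whole circle. We must also keep the indexing straight at the last block and at the wraparound through $b+1\equiv-a+1$, together with the degenerate situations $\varepsilon=0$ and $\eta=0$. To guard against off-by-one errors I would check each case against the example $(a,b)=(3,8)$: there $\eta=1$ and $\varepsilon=2$, and the formula predicts $\Omega=\{3,8\}$, which a direct count confirms.
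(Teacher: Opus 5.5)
Your proof is correct and is essentially the paper's argument in dual form. The gap between the difference arcs of two consecutive blocks of $\mathcal Z$ is exactly the set $[u,u+\ell-a]$ of window starts in the complementary run between them, and the paper reads these starts off directly.

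There is one bookkeeping slip, in case (iii). The gaps between consecutive full-block arcs are only the points $(2j+1)a$ for $0\le j\le(\eta-3)/2$, and $\eta a$ is the single extra point before the partial-block arc; as written you count $\eta a$ twice. Your final list $\{(2j-1)a:1\le j\le(\eta+1)/2\}$ is nevertheless correct.
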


\begin{proof}
Since $[0,a)\subseteq\mathcal Z$, no complementary run crosses $0$.
A run $[u,u+\ell)$ contains a length-$a$ window precisely when $\ell\ge a$,
and its possible starts are $[u,u+\ell-a]$. Apply this rule to the
complementary blocks in Proposition~\ref{prop:base}: the ordinary odd
blocks have length $a$; the top run has length $b$ in (i),
$a+\varepsilon$ in (ii), $a$ in (iii), and $2a$ in (v).
In (iv) all complementary runs have length $a$. This gives the five formulas.
\end{proof}

\begin{prop}\label{prop:evenharm}
Let $b=ka$ with $k$ even. Then every element of $\Th_{c+b}$ is a multiple of $a$.
Consequently, for $a\ge2$ and $\gcd(a,b,c)=1$, no ruleset with $b=ka$, $k$ even, has an
admissible angle for the period $c+b$.
\end{prop}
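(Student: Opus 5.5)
We work directly from condition \eqref{eq:T2}, using the block structure of $\mathcal Z$ in the even harmonic case. Here $b=ka$ with $k\ge2$ even, so $\delta=(k-1)a$, $\eta=k-1$ is odd and $\varepsilon=0$.

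\emph{Block description.} We cut the circle $\Z/(a+b)\Z=\Z/(k+1)a\Z$ into the $k+1$ blocks $\beta_i=[ia,(i+1)a)$, $0\le i\le k$. Proposition~\ref{prop:base} gives $\mathcal Z=\bigcup_{j=0}^{(k-2)/2}\beta_{2j}$, since the partial block has length $\varepsilon=0$. So $\mathcal Z$ consists of the even blocks $\beta_0,\beta_2,\dots,\beta_{k-2}$, and the complement consists of the odd blocks $\beta_1,\dots,\beta_{k-1}$ together with $\beta_k$. By \eqref{eq:I1Lambda} in the odd case, $\Lambda=[\eta a+\varepsilon,(\eta+1)a)=\beta_{k-1}$. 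Also $D=[a,b)\setminus\mathcal Z=\beta_1\cup\beta_3\cup\dots\cup\beta_{k-1}$, so in particular $\beta_{k-1}\subseteq D$.

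\emph{Main step.} Suppose $\rho\in\Th_{c+b}$ and $a\nmid\rho$. Write $\rho=qa+r$ with $0<r<a$. We argue in three steps.
\begin{enumerate}
\item The translate $\rho+\Lambda$ is a length-$a$ arc starting at $(k-1+q)a+r$. It meets both blocks $\beta_{k-1+q}$ and $\beta_{k+q}$ (indices mod $k+1$), each in a nonempty set, because $0<r<a$.
\item The first condition of \eqref{eq:T2} forces two cyclically consecutive blocks to lie outside $\mathcal Z$. Going around the circle, the non-$\mathcal Z$ blocks alternate with $\mathcal Z$-blocks except for the single adjacent pair $(\beta_{k-1},\beta_k)$. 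Note that $\beta_k$ is followed by $\beta_0\subseteq\mathcal Z$, and when $k=2$ the pair is $(\beta_1,\beta_2)$. Hence $q\equiv0\pmod{k+1}$, i.e.\ $\rho=r\in(0,a)$.
\item Now take $u=(k-1)a\in\beta_{k-1}\subseteq D$. Then $\rho+u=(k-1)a+r\in\beta_{k-1}=\Lambda$, which violates the second condition $(\rho+D)\cap\Lambda=\varnothing$.
\end{enumerate}
Therefore every element of $\Th_{c+b}$ is a multiple of $a$.

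\emph{Consequence.} If $\rho\in\Th_{c+b}$, then $a\mid\rho$. Since also $a\mid b$ gives $a\mid(a+b)$, we get $a\mid c=K(a+b)+\rho$. Together with $a\mid b$ this gives $a\mid\gcd(a,b,c)=1$, which is impossible for $a\ge2$.

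The only delicate point is step~2, the adjacency bookkeeping on the circle of $k+1$ blocks. One must check that $(\beta_{k-1},\beta_k)$ is the unique pair of consecutive gap blocks, including the wrap from $\beta_k$ to $\beta_0$ and the degenerate case $k=2$. Everything else is a direct reading of Proposition~\ref{prop:base} and \eqref{eq:I1Lambda}.
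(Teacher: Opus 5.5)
Your proof is correct and follows essentially the same route as the paper. The paper gets your step~2 by citing the gap-window set $\Omega$ of Lemma~\ref{lem:windows}\,(v): the first clause of \eqref{eq:T2} means $\rho+(k-1)a\in\Omega$, so $a\mid\rho$ or $\rho\in[0,a]$. Your block-adjacency argument rederives exactly this fact directly, and your step~3 (the witness $(k-1)a\in D$) and the divisibility consequence are identical to the paper's.
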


\begin{proof}
Here $\Lambda=[(k-1)a,ka)\subseteq D$. The first clause of
\eqref{eq:T2} requires
$(\rho+(k-1)a)\bmod(a+b)\in\Omega$.
By Lemma~\ref{lem:windows}(v), either $a\mid\rho$ or $\rho\in[0,a]$.
If $0<\rho<a$, the point $d=(k-1)a\in D$ has
$d+\rho\in\Lambda$, violating the second clause of \eqref{eq:T2}.
Thus $a\mid\rho$ in every case. Since $a\mid(a+b)$, this implies
$a\mid c$, contrary to $\gcd(a,b,c)=1$ when $a\ge2$.
\end{proof}

\begin{proof}[Proof of Lemma~\ref{lem:idiff} and the remaining cases of Theorem~\ref{thm:arcs}]
For $b\le2a$, one has $\mathcal Z=[0,a)$, so a translate avoids
$\mathcal Z$ exactly at shifts in $[a,b]$; the $c+a$ assertion is
Proposition~\ref{prop:theta-ca-small}. If $b=ka$ with $k$ odd, the
pattern modulo $2a$ is $[0,a)$, and avoidance holds exactly at odd
multiples of $a$. Denote this set by $\mathcal O$; thus
$\mathcal I=\mathcal O$ in the odd harmonic case, independently of
the $c+a$ calculation below.

For $b>2a$, put $A=\mathcal Z\cap[0,\delta)$ and set
$E=\mathcal O$ in the odd harmonic case and $E=\{a,b\}$ otherwise.
We prove the common difference identity
\[
  A-\mathcal Z=\Z/(a+b)\Z\setminus E.
\]
The differences $a,b$ are excluded by Remark~\ref{rem:Zfacts}(i).
In the odd harmonic case every difference reduces modulo $2a$ to
$r-r'$ with $0\le r,r'<a$, so every element of $\mathcal O$ is excluded.

For coverage, let $\sigma\notin E$. If some $\alpha\in[0,a)$ has
$(\alpha-\sigma)\bmod(a+b)\in\mathcal Z$, then $\sigma\in A-\mathcal Z$.
Otherwise $\omega=(a+b-\sigma)\bmod(a+b)$ belongs to $\Omega$.
In the odd harmonic case Lemma~\ref{lem:windows}(iv) would make
$\sigma$ an odd multiple of $a$, a contradiction. In the other cases,
substituting the windows from Lemma~\ref{lem:windows} and omitting
$\sigma=a,b$ leaves exactly the following possibilities. Each row
gives witnesses $\alpha\in A$, $z\in\mathcal Z$ with $\alpha-z=\sigma$:
\[
\begin{array}{c|c|c|c}
\text{case}&\sigma&\alpha&z\\ \hline
\varepsilon\ge1,\ \eta\text{ even}
 &(2i+1)a+\varepsilon,\ 1\le i\le\eta/2-1&(2i+2)a&a-\varepsilon\\
 &a+s,\ 1\le s\le\varepsilon&2a&a-s\\[2pt]
\varepsilon\ge1,\ \eta\text{ odd}
 &2ia+\varepsilon,\ 1\le i\le(\eta-1)/2&2ia+\varepsilon&0\\[2pt]
\varepsilon=0,\ \eta\text{ odd}
 &2ia,\ 2\le i\le(\eta-1)/2&2ia&0\\
 &a+s,\ 1\le s\le a&2a&a-s
\end{array}
\]
Empty index ranges contribute nothing. In the even-$\eta$ rows, a
possible endpoint $\alpha=\eta a$ lies in the partial block of $A$,
$[\eta a,\eta a+\varepsilon)$; all other listed points lie in full
blocks of $\mathcal Z$.
All listed $\alpha$ are smaller than $\delta$, so the identity follows.

Since $A\subseteq\mathcal Z$, this identity gives
$\mathcal I\subseteq E$; the reverse inclusion follows from the
non-collisions already noted. This proves Lemma~\ref{lem:idiff}.
By Proposition~\ref{prop:instances}, the complement of
$\delta-(A-\mathcal Z)$ is $\Th_{c+a}$, hence $\Th_{c+a}=\delta-E$.
For $E=\{a,b\}$ this is $\{\delta-a,b\}$, since
$\delta-b\equiv b\pmod{a+b}$; in the odd harmonic case both $\delta$
and $a+b$ are even multiples of $a$, so $\delta-\mathcal O=\mathcal O=\mathcal I$.
This proves the remaining cases of Theorem~\ref{thm:arcs}.
\end{proof}

\begin{proof}[Proof of Proposition~\ref{prop:arcdiff}]
Write $t=\lfloor\eta/2\rfloor$. For nonempty integer intervals,
Lemma~\ref{lem:avoid} gives
\[
 [u,u+\ell)-[v,v+m)=[u-v-m+1,u-v+\ell-1],
\]
with the resulting closed interval read modulo $a+b$.
The full blocks of $\mathcal Z$ and $D$ are
$Z_j=[2ja,(2j+1)a)$ and $D_k=[(2k-1)a,2ka)$.
For example, when $\eta=2t$, the interval $Z_j-\Lambda$ is
\[
 [(2j-\eta-1)a-\varepsilon+1,(2j-\eta)a-1]
 \equiv[(2j+1)a+1,(2j+2)a+\varepsilon-1].
\]
The other substitutions give the table below. For odd $\eta$, write
$Z_*=[(\eta+1)a,(\eta+1)a+\varepsilon)$ for the partial block.
\[
\begin{array}{c|c|c|c}
\eta&\text{difference}&\text{closed arc}&\text{index}\\ \hline
\text{even}&Z_j-\Lambda&[(2i+1)a+1,(2i+2)a+\varepsilon-1]&i=j\in[0,t]\\
&\Lambda-D_k&[(2i+1)a+1,(2i+2)a+\varepsilon-1]&i=t-k\in[0,t-1]\\
&\Lambda-\Lambda&[1-\varepsilon,\varepsilon-1]&\\[2pt]
\text{odd}&Z_j-\Lambda&[2ia+1,(2i+2)a-\varepsilon-1]&i=j-t-1\in[-t-1,-1]\\
&Z_*-\Lambda&[1,a-1]&\\
&\Lambda-D_k&[(2i+1)a+\varepsilon+1,(2i+3)a-1]&i=t-k\in[-1,t-1]
\end{array}
\]
Here $0\le j\le t$ and $1\le k\le\lceil\eta/2\rceil$;
$D$ has the additional block $\Lambda$ exactly when $\eta$ is even.
Taking the indicated unions proves both formulas, including empty
index ranges. The assumption $a\nmid b$ ensures $1\le\varepsilon<a$,
so every input block used in the difference formula is nonempty.
\end{proof}

\begin{proof}[Proof of Theorem~\ref{thm:thetacb}]
By Proposition~\ref{prop:instances}, $\Th_{c+b}$ is the complement of
$(\mathcal Z-\Lambda)\cup(\Lambda-D)$.
For $\eta=2t$, Proposition~\ref{prop:arcdiff} gives this union as
\[
 \bigcup_{i=0}^{t}[(2i+1)a+1,(2i+2)a+\varepsilon-1]
 \ \cup\ [1-\varepsilon,\varepsilon-1].
\]
The gaps between consecutive arcs are $[2ia+\varepsilon,(2i+1)a]$
for $1\le i\le t$. The top arc ends at $a+b-1$, the first starts
at $a+1$, and the arc through $0$ leaves the remaining gap
$[\varepsilon,a]$. These are exactly $C_{a,b}\sqcup\{a\}$.

For $\eta=2t+1$, translate each indexed arc of $\mathcal Z-\Lambda$ by
$a+b$: the index becomes $i'=i+t+1\in[0,t]$ and the arc has the
same form as those of $\Lambda-D$. The extra arc $[1,a-1]$ is
contained in $[-a+\varepsilon+1,a-1]$, so the union is
\[
 \bigcup_{i=-1}^{t}[(2i+1)a+\varepsilon+1,(2i+3)a-1].
\]
Its consecutive gaps are $[(2j-1)a,(2j-1)a+\varepsilon]$ for
$1\le j\le t+1$. At the join around the circle, the last endpoint
is $(\eta+2)a-1$ and the next start is
$(\eta+1)a+2\varepsilon+1$, leaving
$[(\eta+2)a,(\eta+1)a+2\varepsilon]$.
This gap is nonempty exactly when $a\le2\varepsilon$.
Separating $a$ from the first gap again gives $C_{a,b}\sqcup\{a\}$.

For $b>2a$, all nonterminal gaps lie below $b$ and the terminal
odd-case gap lies above $b$; after removing $a$, they avoid
$\mathcal I=\{a,b\}$. For $b<2a$, one has $\eta=0$ and
$C_{a,b}=[\delta,a)$, disjoint from $\mathcal I=[a,b]$.
This proves $\Th_{c+b}\setminus\Th_{a+b}=C_{a,b}$.
Finally, if $b=ka$ with $k$ odd, then $\Lambda=\emptyset$ and both
clauses of \eqref{eq:T2} are vacuous, giving the harmonic assertion.
\end{proof}

\begin{lemma}\label{lem:quotient}
Let $S$ be covered, with period $p$ as in Theorems~\ref{thm:additive}
and~\ref{thm:main} \textup{(}$p=p_{\mathrm{add}}$ in the additive case, the least admissible candidate otherwise\textup{)}, let
$u\in\{0,1\}^{p}$ be the cyclic indicator word of $\W_0(S)\cap[0,p)$, and let $p_0$ be
the least period of $\W_0(S)$. Then:
\begin{enumerate}
\item[(a)] $p_0\mid p$, and $u$ is invariant under cyclic rotation by $p_0$;
equivalently $u=v^{M}$ with $v=u|_{[0,p_0)}$ and $M=p/p_0$.
\item[(b)] The cyclic word $v$ satisfies the three-move recurrence with indices read
modulo $p_0$: for every $x\in\Z/p_0\Z$,
\[
  v(x)=1\iff v(x-a)=v(x-b)=v(x-c)=0 .
\]
\item[(c)] If $S$ is non-additive and $p_0<p$, then $p_0\le p/2<c$, so on
$[0,p_0)$ the word $v$ coincides with the base pattern: $v(x)=1\iff x\in\W_0(B)$.
\end{enumerate}
\end{lemma}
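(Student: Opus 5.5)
Part (a) will follow from the standard divisibility fact already used in the proof of Lemma~\ref{lem:sameperiod}. By hypothesis $\W_0(S)$ is purely periodic with period $p$ (Theorem~\ref{thm:additive}, or Theorem~\ref{thm:verify} via Theorem~\ref{thm:main}), and $p_0$ is its least period from $0$. Writing $\gcd(p_0,p)=mp-np_0$ with $m,n\ge0$ and shifting only forward shows that $\gcd(p_0,p)$ is a period; minimality then forces $p_0\mid p$. Periodicity of the infinite indicator with period $p_0$ means $u(x)=u(x+p_0)$ for indices read in $[0,p)$. This is exactly invariance of the cyclic word under rotation by $p_0$, so $u=v^M$ with $M=p/p_0$.

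For (b) I would take $x\in\Z/p_0\Z$ and lift it to a large representative $X\equiv x\pmod{p_0}$, say $X\ge c+p$, so that all three moves are legal at $X$. Lemma~\ref{lem:unique} gives $X\in\W_0\iff X-a,X-b,X-c\notin\W_0$. Since $\W_0$ is $p_0$-periodic from $0$, membership of $X$, $X-a$, $X-b$, $X-c$ depends only on residues modulo $p_0$, namely $v(x)$, $v(x-a)$, $v(x-b)$, $v(x-c)$. This gives the cyclic recurrence. The only care needed is that every argument is nonnegative, which the choice $X\ge c+p$ ensures.

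For (c), $M\ge2$ gives $p_0\le p/2$. In the non-additive covered case the phase periods are $p=c+a$ or $c+b$, both less than $2c$, so $p_0<c$. In the inert case $p=a+b$, and I would note that the minimality argument already shows $p_0=p$ there, so the hypothesis $p_0<p$ is vacuous; alternatively, if $p_0<p=a+b<c$ the bound $p_0<c$ is immediate. Hence $[0,p_0)\subseteq[0,c)$. On $[0,c)$ the games $S$ and $B$ have identical options, so by the first-block calculation of \S\ref{sec:rotation}, $\W_0(S)\cap[0,c)=\W_0(B)\cap[0,c)$. Restricting to $[0,p_0)$ gives $v(x)=1\iff x\in\W_0(B)$.

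The main obstacle is only bookkeeping: keeping cyclic and linear indices straight in (b), and justifying $p<2c$ in every non-additive branch. That inequality holds because $p\le c+b<2c$, using $b<c$.
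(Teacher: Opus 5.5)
Your proposal is correct and follows the paper's proof essentially step for step. Part (a) uses divisibility of the least period, part (b) lifts to a representative where all three moves are legal and invokes periodicity, and part (c) uses $p_0\le p/2\le(c+b)/2<c$ together with agreement with the base game below $c$. The only cosmetic difference is in (b), where you lift directly modulo $p_0$ rather than first establishing the cyclic recurrence for $u$ modulo $p$ and then substituting $u(x)=v(x\bmod p_0)$.
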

\begin{proof}
(a) The least period of a sequence periodic from $0$ divides every
period, so $p_0\mid p$ and the length-$p$ word is the indicated power.
For (b), choose a representative in $[c,c+p)$ of each class modulo
$p$. All three moves are legal there, so the actual $\P$-position
recurrence holds cyclically for $u$; substituting
$u(x)=v(x\bmod p_0)$ gives the recurrence for $v$.
For (c), a proper divisor satisfies
$p_0\le p/2\le(c+b)/2<c$. Below $c$ the move $c$ is unavailable,
so $\W_0(S)$ agrees with the base pattern on $[0,p_0)$.
\end{proof}

Thus a proper period would make a prefix of the base pattern satisfy
the cyclic recurrence. The next lemma restricts such prefix lengths.
\begin{lemma}[Prefix rigidity]\label{lem:prefix}
Let $d\ge1$ and let $Y:=\{x\in[0,d):\ x\bmod(a+b)\in\mathcal Z\}$, regarded as a subset of
$\Z/d\Z$. Then
\[
  (Y+a)\cap Y=\emptyset\ \text{ and }\ (Y+b)\cap Y=\emptyset\quad\text{in }\Z/d\Z
  \qquad\iff\qquad \mu\mid d .
\]
\end{lemma}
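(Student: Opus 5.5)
The plan is to prove the two directions separately, reducing each to the block description of $\mathcal Z$ in Proposition~\ref{prop:base}, the collision facts of Remark~\ref{rem:Zfacts}, and the window set $\Omega$ of Lemma~\ref{lem:windows}.

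\emph{($\Leftarrow$).} Assume $\mu\mid d$. The base indicator is $\mu$-periodic: it is $(a+b)$-periodic by Proposition~\ref{prop:base}, and in the odd harmonic case $b=ka$ it is $2a$-periodic. Hence $Y$, read in $\Z/d\Z$, is the preimage of $\mathcal Z\bmod\mu$ under the reduction $\Z/d\Z\to\Z/\mu\Z$. A collision $y,\,y+s\in Y$ with $s\in\{a,b\}$ would therefore project to a collision modulo $\mu$.
\begin{itemize}
\item For $\mu=a+b$, Remark~\ref{rem:Zfacts}(i),(iii) excludes such a collision directly.
\item For $\mu=2a$, we have $b\equiv a\pmod{2a}$, and the pattern modulo $2a$ is $[0,a)$. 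A shift by $a$ moves $[0,a)$ onto $[a,2a)$, so again no collision occurs.
\end{itemize}

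\emph{($\Rightarrow$), reduction to a condition at the seam.} Assume there are no collisions. First dispose of $d\le a$: then $Y=\Z/d\Z$, so $y$ and $y+a$ always collide, and this case cannot occur. So $d>a$, and $[0,a)\subseteq Y$. Collisions that do not wrap past $d$ are literal differences $a$ or $b$ between base $\P$-positions, and these are excluded by \eqref{eq:Wcoll}. Only wrapping collisions remain.
\begin{itemize}
\item For the shift $a$, a wrapping collision has $x\in[d-a,d)\cap Y$ and $x+a-d\in[0,a)\subseteq Y$. Its absence means $Y\cap[d-a,d)=\varnothing$.
\item Write $d=m(a+b)+r$ with $0\le r<a+b$. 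The previous condition says $(r-a)\bmod(a+b)\in\Omega$, for $d\ge a+b$; for $a<d<a+b$ it says directly that $[d-a,d)$ misses $\mathcal Z$.
\end{itemize}
Lemma~\ref{lem:windows} then lists the surviving residues $r$: $r\equiv 2ja$ for the isolated window starts $(2j-1)a$, together with the top interval of case (i), (ii) or (v), or the single top point in case (iii).

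\emph{($\Rightarrow$), using the shift $b$.} For the shift $b$, a wrapping collision is a point $x\in Y\cap[d-b,d-a)$ with $(x+b-d)\bmod(a+b)\in\mathcal Z$. Since $b\equiv-a$, this reads $x-a-r\in\mathcal Z$ modulo $a+b$. For each surviving $r$ other than $r\equiv0\pmod\mu$, I will exhibit an explicit witness $x$. The natural first choice is a point of the last full block of $\mathcal Z$ below the window $[d-a,d)$, that is, $x\equiv r-2a+i$ for suitable $i\in[0,a)$, chosen so that $x-a-r\equiv -3a+i$ lands in a block of $\mathcal Z$. The surviving values are exactly:
\begin{itemize}
\item $r=0$;
\item in the odd harmonic case, the values of $r$ that are multiples of $2a$;
\item in case (iii), the residue $r=b$, and in the top-interval cases, the residues $r\in[b,a+b)$, where the admissible-looking top windows must be killed. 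For these I expect the witness to come from the partial block $[(\eta+1)a,(\eta+1)a+\varepsilon)$, or from the low block $[0,a)$ translated by $-r$; this uses $r>b$ to keep $x\ge d-b$.
\end{itemize}
Small $d$, namely $a<d<a+b$ and $d<b$ where $[d-b,0)$ wraps, I will treat separately using $[0,a)\subseteq Y$ as the witness source.

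The main obstacle is this last case check. It must show that every non-multiple of $\mu$ allowed by $\Omega$ is destroyed by a $b$-wrap collision, uniformly over the parities of $\eta$ and the harmonic cases. I would organize it as a table parallel to the witness table in the proof of Lemma~\ref{lem:idiff}, with one witness pair per row.
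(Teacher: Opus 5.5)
Your ($\Leftarrow$) direction matches the paper. So does the first step of ($\Rightarrow$): you rule out $d\le a$ and turn the $a$-shift into $Y\cap[d-a,d)=\varnothing$, i.e.\ $(r-a)\bmod(a+b)\in\Omega$.

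\textbf{First gap: the case $d>b$.} The proposal stops where the proof really begins. Killing the $\Omega$-survivors by $b$-collisions is announced as ``the main obstacle'' but never carried out. The survivor list you give is also inaccurate:
\begin{itemize}
\item In case (iii) the top window start $b$ yields $r\equiv a+b\equiv0$, not $r=b$.
\item The nonzero top residues are $[2a,a+b)$, $[(\eta+2)a,a+b)$ and $[b,a+b)$ in cases (i), (ii) and (v).
\item The list you call exact omits the isolated values $r=2ja$ outside the odd harmonic case.
\end{itemize}
The missing idea is to parametrize a wrapping $b$-collision by its lower endpoint $z=x+b-d$ rather than by $x$. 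Every $z\in\mathcal Z$ satisfies $z<b<d$. So $z+(d-b)\in[0,d)$ is a genuine point, with residue $z+r+a$, and it must avoid $Y$. Hence $\mathcal Z\cap(\mathcal Z+r+a)=\varnothing$, that is, $(r+a)\bmod(a+b)\in\mathcal I$. The witness table you plan to build is therefore already Lemma~\ref{lem:idiff}, and no side condition such as $r>b$ arises. Outside the odd harmonic case this finishes the argument:
\begin{itemize}
\item When $\mathcal I=\{a,b\}$, it gives $r\in\{0,\delta\}$, and $r=\delta$ is excluded because $\delta-a\notin\Omega$.
\item When $b\le2a$, it gives $r\le\delta\le a$, which the $a$-shift condition permits only for $r=0$.
\end{itemize}
In the odd harmonic case the $a$-shift condition alone already gives $2a\mid r$.

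\textbf{Second gap: the range $a<d\le b$.} This range receives no argument beyond ``I will treat separately''. The device above fails there, because $z+(d-b)$ may be negative. The case is not hard, but it must be done:
\begin{itemize}
\item Since $d<a+b$, the $a$-shift condition says literally that $d-a\in\Omega$.
\item Lemma~\ref{lem:windows} then leaves only $d=2ja$. The top families give $d>b$, or $d=b$ in case (v), where $b\equiv0\pmod d$ makes $Y+b=Y$.
\item For $d=2ja\le b$ outside the odd harmonic case, $Y$ is the half pattern $\{x:\ x\bmod 2a<a\}$ on $\Z/d\Z$.
\item Then $(Y+b)\cap Y=\varnothing$ forces $b\equiv a\pmod{2a}$, a contradiction.
\end{itemize}
The paper reaches the same contradiction through $\beta=b\bmod d$ and the window $\nu=d-\beta\in\Omega$. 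Once these two pieces are supplied, your outline becomes a complete proof along the paper's lines.
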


\begin{proof}
($\Leftarrow$) If $(a+b)\mid d$ and $x,y\in Y$ satisfy $x-y\equiv a\pmod d$, then
$x-y\equiv a\pmod{a+b}$, so the residues of $y,x$ give an element of
$\mathcal Z\cap(\mathcal Z+a)=\emptyset$ \textup{(}Remark~\ref{rem:Zfacts}\,(i)\textup{)},
a contradiction; likewise for $b\equiv-a\pmod{a+b}$. If
$\mu=2a\mid d$ \textup{(}$b=ka$, $k$ odd\textup{)}, then every element of $Y$ is
congruent modulo $2a$ to an element of $[0,a)$, and two such elements cannot differ by
$a\equiv b\pmod{2a}$.

($\Rightarrow$) Assume both disjointnesses. If $d\le a$ then $Y=\Z/d\Z$ and
$(Y+a)\cap Y=\emptyset$ fails; so $d>a$ and $[0,a)\subseteq Y$. Write $d=Q(a+b)+\bar d$ with $0\le\bar d<a+b$. The hypotheses enter only through two consequences, one per move, and each
of them is an avoidance statement on the circle $\Z/(a+b)\Z$ in the sense of
Lemma~\ref{lem:avoid}.

\emph{Claim 1 \textup{(}the last $a$-window is empty\textup{)}.} In $\Z/d\Z$ one has $x-a\equiv x+(d-a)$, so
$(Y+a)\cap Y=\emptyset$ applied to $[0,a)\subseteq Y$ forces $[d-a,d)\cap Y=\emptyset$: the
last $a$ positions carry no element of $\mathcal Z$. If $1\le\bar d\le a$ this already
fails, since $d>a$ gives $Q\ge1$ and then $Q(a+b)=d-\bar d$ lies in $[d-a,d)$ with residue
$0\in\mathcal Z$. Hence $\bar d=0$, or else $\bar d>a$ and the window $[d-a,d)$ has the
literal residues $[\bar d-a,\bar d)$, so that
\begin{equation}\label{eq:R1}
  \bar d-a\in\Omega .
\end{equation}

\emph{Claim 2 \textup{(}if $d>b$ then $(\bar d+a)\bmod(a+b)\in\mathcal I$\textup{)}.} Since $\max\mathcal Z\le b-1$, every residue
$z\in\mathcal Z$ occurs in $Y$ as its least representative $z<b<d$, and $z+(d-b)\in[0,d)$
differs from $z$ by $-b$ in $\Z/d\Z$; hence $z+(d-b)\notin Y$. As $d-b\equiv\bar d+a\pmod{a+b}$, this says $(z+\bar d+a)\bmod(a+b)\notin\mathcal Z$ for \emph{every} $z\in\mathcal Z$, that
is,
\begin{equation}\label{eq:R2}
  (\bar d+a)\bmod(a+b)\in\mathcal I .
\end{equation}

If $\bar d=0$ then $(a+b)\mid d$, and $\mu\mid(a+b)\mid d$ \textup{(}for $b=ka$ with $k$ odd, $a+b=(k+1)a$ with $k+1$ even\textup{)}: done. Assume henceforth $\bar d\neq0$, so that
\eqref{eq:R1} holds.

\emph{The harmonic shape $b=ka$, $k$ odd.} Lemma~\ref{lem:windows}(iv) makes
$\bar d-a$ an odd multiple of $a$, so $2a\mid\bar d$; with $2a$ dividing $a+b$ this gives
$\mu=2a\mid d$, as required. In every remaining case, $b$ is not an odd
multiple of $a$ and $\mu=a+b$. We derive a contradiction from
$\bar d\neq0$, considering $d>b$ and $a<d\le b$ separately.

\emph{Claim 3 \textup{(}the case $d>b$\textup{)}.} If $\varepsilon\ge1$ with $\eta\ge1$, or $b=ka$ with $k$ even,
$k\ge4$, then Lemma~\ref{lem:idiff} turns \eqref{eq:R2} into $(\bar d+a)\bmod(a+b)\in\{a,b\}$, so $\bar d\in\{0,\delta\}$; and $\bar d=\delta$ contradicts \eqref{eq:R1},
because $\delta-a\notin\Omega$: for $\varepsilon\ge1$ the number
$\delta-a=(\eta-1)a+\varepsilon$ is not an odd multiple of $a$ and lies below the top family
of Lemma~\ref{lem:windows}, while for $k$ even $\delta-a=(k-2)a$ fits neither shape of
Lemma~\ref{lem:windows}(v). If instead $b\le2a$, Lemma~\ref{lem:idiff} gives
$\mathcal I=[a,b]$ and hence $\bar d\in[0,\delta]$ with $\delta\le a$, whereas
\eqref{eq:R1} forces $\bar d>a$. Either way $d\le b$.

\emph{Claim 4 \textup{(}the case $a<d\le b$\textup{)}.} Here $Q=0$ and $\bar d=d$, so \eqref{eq:R1} reads
$d-a\in\Omega$. Put $\beta:=b\bmod d$ and $m:=\lfloor b/d\rfloor\ge1$. If $\beta<a$ then
$0,\beta\in[0,a)\subseteq Y$ and $\beta\equiv0+b\pmod d$, contradicting
$(Y+b)\cap Y=\emptyset$; so $\beta\ge a$. Next, $d-a$ cannot lie in the top family of
Lemma~\ref{lem:windows}: for $\varepsilon\ge1$ that family forces $d\ge a+b-\varepsilon>b$ or $d=a+b$, and for $k$ even it forces $d\in[ka,(k+1)a]$, whence $d=b$ and $\beta=0$, already
excluded. So $d-a$ is an odd multiple of $a$, say $d=2ja$ with $j\ge1$. Finally put
$\nu:=d-\beta=(m+1)d-b\le d-a$. For $x\in[0,a)\subseteq Y$ the integer $x+\nu$ lies in
$[0,d)$ and differs from $x$ by $-\beta\equiv-b$ in $\Z/d\Z$, so $[\nu,\nu+a)\cap
Y=\emptyset$; as $\nu+a\le d<a+b$ these are literal residues and $\nu\in\Omega$. Substituting
$d=2ja$ gives $\nu=2j(m+1)a-b$, and we test the two shapes of Lemma~\ref{lem:windows}. If
$\nu$ is an odd multiple $(2i-1)a$ of $a$, then $b=(2j(m+1)-2i+1)a$ exhibits $b$ as an odd
multiple of $a$, excluded in the present shapes. If $\nu$ lies in the top family, then: for
$\varepsilon\ge1$ and $\eta$ even, $\nu=(\eta+1)a+u$ with $0\le u\le\varepsilon$ gives
$a\mid(\varepsilon+u)$, and $1\le\varepsilon+u\le2a-2$ forces $\varepsilon+u=a$ and then
$2j(m+1)=2\eta+3$, odd; for $\varepsilon\ge1$ and $\eta$ odd, $\nu=(\eta+1)a+\varepsilon$
gives $a\mid2\varepsilon$, forcing $2\varepsilon=a$ and again $2j(m+1)=2\eta+3$; for $k$
even, $\nu=(k-1)a+u$ with $0\le u\le a$ gives $u\in\{0,a\}$, and $u=0$ makes $2j(m+1)=2k-1$
odd while $u=a$ makes $\nu=ka=b>d-a\ge\nu$. \textup{(}For $\eta=0$ no $d$ survives
\eqref{eq:R1} at all: $\Omega=[a,b]$ forces $d\ge2a>b\ge d$.\textup{)} Claims~3
and~4 show that $\bar d\neq0$ is impossible outside the harmonic shape, where $\mu=a+b$ and $\bar d=0$ gives $(a+b)\mid d$; in the harmonic shape Claim~1 gave
$2a\mid d$. Hence $\mu\mid d$ in every case.
\end{proof}


\section{Deferred proofs for the additive family}\label{app:additive}

We establish the explicit nim-value function used in the additive
case of Theorem~\ref{thm:lfunction}, starting from the $\P$-position
formula of Theorem~\ref{thm:additive}.
We first show that the four translates of $\W_0$ cover every residue
modulo $p$, then verify that the function obtained by assigning
the smallest translate index satisfies the mex recurrence.

Write $p=p_{\mathrm{add}}$ and
$W=\W_0(S)\cap[0,p)=\{\alpha_n:0\le n<Q\}$.
The bound on the largest $\P$-position in Theorem~\ref{thm:additive} gives
\begin{equation}\label{eq:wmax}
  \max W=\alpha_{Q-1}=p-c-1.
\end{equation}
In particular $W$, $W+a$, $W+b$ and $W+c$ are subsets of $[0,p)$,
without reduction modulo $p$. For odd $\eta$, one has
$N=m_\eta=\tfrac{\eta+1}{2}a+\varepsilon$ and $Q=am_\eta$;
for even $\eta$, $Q=N$ and $W=\mathcal Z_{\mathrm a}$.
The same proof gives the literal difference count
\[
  \#\{w\in W:w+\delta\in W\}
  =\begin{cases}
    a-\varepsilon,&\eta\text{ even},\\
    a\varepsilon,&\eta\text{ odd}.
  \end{cases}
\]
These are the two counts used below.

\begin{proof}[Additive case of Theorem~\ref{thm:lfunction}]
We now identify the nim-value function for the additive family. Note $\gcd(a,b)=1$: it divides $c=a+b$, and $\gcd(a,b,c)=1$. Let
$p$ be the
period and $\delta=b-a$. Consider, modulo $p$, the four translates of $\W_0$ by the shifts
$(o_0,o_1,o_2,o_3)=(0,\,a,\,-b,\,-\delta)$ and put
\[
  F(x)=\min\{\,i\in\{0,1,2,3\}:\ x\in\W_0+o_i\ (\mathrm{mod}\ p)\,\}.
\]
We prove $F=\G_S$; for the primitive quadratic case $\eta=1$ this is the nim-value
determination of \cite{Larsson}, and the argument below is uniform in $\eta$.

\emph{Disjointness and covering.} Translates $\W_0+o_i,\W_0+o_j$ meet iff $o_j-o_i$ is,
modulo $p$, a difference of two elements of $\W_0$. No two elements of $\W_0$
differ by $a$, $b$ or $c=a+b$ modulo $p$: literal move-differences are impossible
for a $\P$-set, and differences reduced modulo $p$ are excluded because
$\W_0\cap[0,p)\subseteq[0,\,p-c)$, established in the proof of
Theorem~\ref{thm:additive} for $\eta$ odd and immediate from
$\mathcal Z_{\mathrm a}\subseteq[0,(\eta+1)a)$ with $p-c=(\eta+1)a$ for $\eta$
even. Among the gaps of these translates
\[
  o_1-o_0=a,\quad o_2-o_0=-b,\quad o_2-o_1=-c,\quad o_3-o_1=-b,\quad o_3-o_2=a
\]
each is forbidden, while $o_3-o_0=-\delta$ is unconstrained. Hence the four translates are
pairwise disjoint with the single possible exception $\W_0\cap(\W_0-\delta)$, and a triple
coincidence is impossible \textup{(}it would force a second forbidden gap\textup{)}. Writing
$N_\delta:=|\W_0\cap(\W_0-\delta)\cap[0,p)|$, inclusion--exclusion gives, per period,
$\bigl|\bigcup_i(\W_0+o_i)\bigr|=4\,|\W_0\cap[0,p)|-N_\delta$, so the four translates cover
$[0,p)$ iff
\begin{equation}\label{eq:cover}
  N_\delta=4\,|\W_0\cap[0,p)|-p .
\end{equation}
Both sides of \eqref{eq:cover} are computed from Theorem~\ref{thm:additive}. For $\eta$ odd,
$|\W_0\cap[0,p)|=am$ with $m=m_\eta$ and $p=a(2m+b)$, so
$4am-p=a(2m-b)=a\varepsilon$; and the proof of Theorem~\ref{thm:additive} established
$N_\delta=a\varepsilon$ \textup{(}the count of $w\in W$ with $w+\delta\in W$; no
reduction modulo $p$ occurs, since $w+\delta\le(p-c-1)+\delta<p$ by \eqref{eq:wmax}\textup{)}. For
$\eta$ even, $|\W_0\cap[0,p)|=(\tfrac{\eta}{2}+1)a$ and $p=(2\eta+3)a+\varepsilon$, so
$4(\tfrac{\eta}{2}+1)a-p=a-\varepsilon$; and $N_\delta=a-\varepsilon$ directly: for
$x=2ja+r\in\mathcal Z_{\mathrm a}$ \textup{(}$0\le j\le\eta/2$, $0\le r<a$\textup{)} one has
$x+\delta=(2j+\eta)a+(r+\varepsilon)$ with no reduction modulo $p$
\textup{(}$x+\delta\le(2\eta+1)a+\varepsilon-1<p$\textup{)}; this lies in an odd block when
$r+\varepsilon\ge a$, and otherwise in the even block of index $2j+\eta$, which belongs to
$\mathcal Z_{\mathrm a}$ only when $2j+\eta\le\eta$, i.e.\ $j=0$. This leaves exactly the
$a-\varepsilon$ values $r<a-\varepsilon$ at $j=0$. In both parities \eqref{eq:cover} holds,
so the four translates cover $[0,p)$ and $F$ is everywhere defined. Their priority regions are
\[
  R_0=\W_0,\quad R_1=\W_0+a,\quad R_2=\W_0-b,\quad R_3=(\W_0-\delta)\setminus\W_0 .
\]

\emph{The recurrence.} We show $F=\G_S$ by strong induction on $x$, using at each step the
induction hypothesis $\G_S=F$ below $x$, Lemma~\ref{lem:ferguson} (value $1\iff x\in\W_0+a$),
and the collision property just recorded. The known $\P$-position set and Ferguson's pairing already
determine the positions of values $0$ and $1$.
On the remaining regions, it is enough to decide whether a
value-$2$ option exists: there is none on $R_2$, whereas the
$a$-option has value $2$ on $R_3$.
\begin{itemize}
\item $x\in R_0=\W_0$: then $\G_S(x)=0=F(x)$ by the definition of $\W_0$.
\item $x\in R_1=\W_0+a$ (so $x\notin\W_0$): Lemma~\ref{lem:ferguson} gives $\G_S(x)=1=F(x)$.
\item $x\in R_2=\W_0-b$: from $x\notin\W_0$ (else $x,x+b\in\W_0$ differ by $b$) and
$x\notin\W_0+a$ (else $x-a,x+b\in\W_0$ differ by $c$) we get $\G_S(x)\in\{2,3\}$ via
Lemma~\ref{lem:ferguson}. No option lies in $\W_0-b$, since $x-s\in\W_0-b$ for $s\in\{a,b,c\}$
would place $x-s+b$ and $x+b$ in $\W_0$ a distance $s$ apart; by the induction hypothesis the
value-$2$ positions below $x$ are exactly $\W_0-b$, so no option has value $2$. As $\G_S(x)=3$
would require an option of value $2$, we conclude $\G_S(x)=2=F(x)$.
\item $x\in R_3=(\W_0-\delta)\setminus\W_0$: from $x\notin\W_0+a$ (else $x-a,x+\delta\in\W_0$
differ by $b$) and $x\notin\W_0-b$ (else $x+b,x+\delta\in\W_0$ differ by $a$) we again get
$\G_S(x)\in\{2,3\}$. The $a$-option satisfies $x-a\in\W_0-b$: indeed $x+\delta\in\W_0$ gives
$(x-a)+b=x+\delta\in\W_0$. By the induction hypothesis $\G_S(x-a)=2$, so value $2$ is an
option and $\G_S(x)\neq2$; hence $\G_S(x)=3=F(x)$.
\end{itemize}
This proves $F=\G_S$. In particular the value-$2$ set is $\W_2=\W_0-b$ and the value-$3$ set
is $\W_3=(\W_0-\delta)\setminus\W_0$; their identification with the $L$-parity
candidates $V_2,V_3$ of Definition~\ref{def:L} is carried out in the proof of
Theorem~\ref{thm:lfunction}.
\end{proof}

\section*{Acknowledgements}

This work grew out of a research visit to the Indian Institute of Technology
Bombay. The questions treated here originate in the collaboration with Urban
Larsson on additive subtraction games \cite{Larsson} begun during that visit,
and the present paper is the author's own continuation and development of that
starting point. The author is grateful to Urban Larsson for his hospitality and
for valuable comments on an earlier version of this paper.

\section*{Generative AI disclosure}
During the preparation of this work the author used Anthropic's Claude (Claude Code;
models Claude Fable~5 and Fable~5.1) for numerical exploration and verification scripts, for an
accompanying Lean~4 formalization \textup{(}its coverage is documented in the
ancillary files\textup{)}, and for drafting and revising
the exposition and the figures, and OpenAI's Codex (models GPT-5.6~Sol and GPT-6 Astra) for adversarial manuscript
review and revision of the exposition.
The principal mathematical ideas and the analysis are the author's. The author reviewed and verified every statement and all AI-assisted
output, and takes full responsibility for the manuscript and its claims. The
verification code and the Lean formalization accompany this preprint as ancillary
files; the Lean declarations and proofs are those of commit \texttt{3c3d49bf}.



\begin{thebibliography}{99}
\bibitem{AB95} I.~Alth\"ofer, J.~B\"ultermann, Superlinear period lengths in some subtraction
  games, \emph{Theoret. Comput. Sci.} \textbf{148} (1995), 111--119.
\bibitem{berlekamp2004winning} E.~R. Berlekamp, J.~H. Conway, R.~K. Guy,
  \emph{Winning Ways for Your Mathematical Plays}, 2nd ed., A K Peters, 2001--2004.
\bibitem{BLMY} A.~Bhagat, U.~Larsson, H.~Manabe, T.~Yamashita, \emph{Additive sink subtraction},
  arXiv:2601.18715, 2026.
\bibitem{Ferguson} T.~S. Ferguson, On sums of graph games with last player losing,
  \emph{Internat. J. Game Theory} \textbf{3} (1974), 159--167.
\bibitem{FineWilf} N.~J. Fine, H.~S. Wilf, Uniqueness theorems for periodic functions,
  \emph{Proc. Amer. Math. Soc.} \textbf{16} (1965), 109--114.
\bibitem{Flammenkamp} A.~Flammenkamp, \emph{Lange Perioden in Subtraktions-Spielen},
  Ph.D. thesis, Universit\"at Bielefeld, 1997.
\bibitem{golomb1966mathematical} S.~W. Golomb,
  A mathematical investigation of games of ``take-away'', \emph{J. Combin. Theory} \textbf{1} (1966), 443--458.
\bibitem{Grundy} P.~M. Grundy, Mathematics and games, \emph{Eureka} \textbf{2} (1939), 6--8.
\bibitem{Ho} N.~B. Ho, On the expansion of three-element subtraction sets,
  \emph{Theoret. Comput. Sci.} \textbf{582} (2015), 35--47.
\bibitem{Larsson} U.~Larsson, H.~Manabe, \emph{Additive Subtraction Games}, arXiv:2603.10414, 2026.
\bibitem{LarssonSaha} U.~Larsson, I.~Saha, A brief conversation about
subtraction games, with an appendix by K.~Suetsugu, in \emph{Games of No Chance 6},
Math. Sci. Res. Inst. Publ. \textbf{71}, Cambridge University Press (2025),
25--42.
\bibitem{MiklosPost} I.~Mikl\'os, L.~Post, Superpolynomial period lengths of the winning
  positions in the subtraction game, \emph{Internat. J. Game Theory} \textbf{53} (2024),
  1275--1313; extended version arXiv:2312.02426v1. Result numbers cited in this paper
  follow the arXiv version.
\bibitem{MoriwakiThesis} Y.~Moriwaki, \emph{Results on Mory sequences of Restricted
  Nim, including the proof of Conway's Folklore Theorem}, Master's thesis, Hiroshima
  University, 2025 (in Japanese).
\bibitem{Moriwaki} Y.~Moriwaki, \emph{Subtraction Nim with Continuous Parameters},
  arXiv:2606.11658, 2026.
\bibitem{Sprague} R.~P. Sprague, \"Uber mathematische Kampfspiele,
  \emph{T\^ohoku Math. J.} \textbf{41} (1935--36), 438--444.
\bibitem{Ward} M.~Ward, \emph{A Conjecture about Periods in Subtraction Games}, arXiv:1606.04029, 2016.
\bibitem{Zhang} S.~Zhang, On the linearity of the periods of subtraction
games, \emph{Theoret. Comput. Sci.} \textbf{985} (2024), 114350.
References to results and corrections follow the
\href{https://zhangshenxing.github.io/publications/Zhang2024tcs\%20On\%20the\%20linearity\%20of\%20the\%20periods\%20of\%20subtraction\%20games.pdf}{author's revised version of 30 July 2025}.

\end{thebibliography}
\end{document}